\newtheorem{theorem}{Theorem}[section]
\newtheorem{lemma}[theorem]{Lemma}
\newtheorem{prop}[theorem]{Proposition}
\newtheorem{cor}[theorem]{Corollary}
\theoremstyle{definition}
\newtheorem{definition}[theorem]{Definition}
\newtheorem{remark}[theorem]{Remark}
\newtheorem{example}[theorem]{Example}
\numberwithin{equation}{section}
\DeclareMathOperator{\lspan}{span}
\DeclareMathOperator*{\Div}{div}
\DeclareMathOperator*{\cone}{cone}
\DeclareMathOperator*{\Hom}{Hom}
\DeclareMathOperator*{\diag}{diag}
\DeclareMathOperator*{\rk}{rank}
\DeclareMathOperator{\pit}{\widetilde{\pi}}
\newcommand{\SL}{\mathrm{SL}}
\newcommand{\rleft}{\mathopen{}\mathclose\bgroup\left}
\newcommand{\rright}{\aftergroup\egroup\right}
\newcommand{\C}{\mathds{C}}
\newcommand{\Q}{\mathds{Q}}
\newcommand{\Z}{\mathds{Z}}
\newcommand{\Pb}{\mathds{P}}
\newcommand{\cl}{0}
\newcommand{\Vm}{\mathcal{V}}
\newcommand{\Vmt}{\widetilde{\mathcal{V}}}
\newcommand{\Cm}{\mathcal{C}}
\newcommand{\Cmt}{\widetilde{\mathcal{C}}}
\newcommand{\Fm}{\mathcal{F}}
\newcommand{\Dm}{\mathcal{D}}
\newcommand{\Pm}{\mathcal{P}}
\newcommand{\Nm}{\mathcal{N}}
\newcommand{\Nmt}{\widetilde{\mathcal{N}}}
\newcommand{\Mm}{\mathcal{M}}
\newcommand{\Mmt}{\widetilde{\mathcal{M}}}
\newcommand{\Ff}{\mathfrak{F}}
\newcommand{\X}{\mathfrak{X}}
\newcommand{\Xt}{\widetilde{X}}
\newcommand{\ie}{i.\,e.~}
\begin{document}
\selectlanguage{english}

\title{Homogeneous spherical data of orbits in spherical embeddings}

\author{Giuliano Gagliardi}
\address{Fachbereich Mathematik, Universit\"at T\"ubingen, Auf der
  Morgenstelle 10, 72076 T\"ubingen, Germany}
\curraddr{}
\email{giuliano.gagliardi@uni-tuebingen.de}
\thanks{}

\author{Johannes Hofscheier}
\address{Fachbereich Mathematik, Universit\"at T\"ubingen, Auf der
  Morgenstelle 10, 72076 T\"ubingen, Germany}
\curraddr{}
\email{johannes.hofscheier@uni-tuebingen.de}
\thanks{}

\begin{abstract}
  Let $G$ be a connected reductive complex algebraic group.  Luna
  assigned to any spherical homogeneous space $G/H$ a combinatorial
  object called a homogeneous spherical datum.  By a theorem of
  Losev, this object uniquely determines $G/H$ up to $G$-equivariant
  isomorphism.  In this paper, we determine the homogeneous spherical
  datum of a $G$-orbit $X_0$ in a spherical embedding $G/H
  \hookrightarrow X$.  As an application, we obtain a description of
  the colored fan associated to the spherical embedding \smash{$X_0
    \hookrightarrow \overline{X_0}$}.
\end{abstract}

\maketitle

\section{Introduction}

Let $G$ be a connected reductive complex algebraic group. A closed
subgroup $H \subseteq G$ is called \emph{spherical} if $G / H$
possesses a dense open orbit for a Borel subgroup $B \subseteq G$. In
this case, $G /H$ is called a \emph{spherical homogeneous space}.

Luna assigned to any spherical homogeneous space $G/H$ a combinatorial
object called a \emph{homogeneous spherical datum}, which, by a
theorem of Losev, uniquely determines $G/H$ up to $G$-equivariant
isomorphism.

Fix a Borel subgroup $B \subseteq G$ and a maximal torus $T \subseteq
B$. We denote by $S$ the induced set of simple roots. The homogeneous
spherical datum of $G/H$ is a quadruple $( \Mm , \Sigma , S^p , \Dm^a
)$ where $\Mm$ is a sublattice of the character lattice $\X(B)$ of
$B$, $\Sigma \subseteq \Mm$ is a linearly independent set of primitive
elements called \emph{spherical roots}, $S^p \subseteq S$, and
$\Dm^a$ is an abstract finite set equipped with a map $\rho^a \colon
\Dm^a \to \Nm \coloneqq \Hom(\Mm, \Z)$.

From a homogeneous spherical datum, we may recover an abstract
finite set $\Dm$ (containing $\Dm^a$) called the set of \emph{colors}, which
is equipped with two maps $\rho\colon \Dm \to \Nm$ (extending
$\rho^a$) and $\varsigma\colon \Dm \to \Pm(S)$ where $\Pm(S)$ denotes
the power set of $S$. 
We denote by $P_\alpha$ the minimal standard parabolic subgroup of
$G$ containing $B$ corresponding to $\alpha \in S$. The elements of $\Dm$ correspond to the
$B$-invariant prime divisors in $G/H$, and for a color $D \in \Dm$ we
have $\alpha \in \varsigma(D)$ if and only if $P_\alpha$ moves
$D$, \ie $P_\alpha \cdot D \ne D$.  As a convenient notation, we write
$\Dm(\alpha)$ for the set of colors moved by $P_\alpha$.

Finally,
we denote by $\Vm \subseteq \Nm_\Q \coloneqq \Nm \otimes_\Z \Q$
the dual cone to the convex cone spanned by $-\Sigma$
in $\Mm_\Q \coloneqq \Mm \otimes_\Z \Q$.

A $G$-equivariant open embedding $G / H \hookrightarrow X$ into a
normal irreducible $G$-variety $X$ is called a \emph{spherical
  embedding}, and $X$ is called a \emph{spherical variety}.  The
Luna-Vust theory associates to any spherical embedding $G/H
\hookrightarrow X$ a \emph{colored fan}, which is a collection of
combinatorial objects called \emph{colored cones}.
A colored cone is a pair $(\Cm, \Fm)$ where $\Cm \subseteq \Nm_\Q$ is
a strictly convex polyhedral cone and $\Fm \subseteq \Dm$. 

Moreover, there is an orbit-cone correspondence, \ie
colored cones in the colored fan of $G/H \hookrightarrow X$
are in bijection with $G$-orbits in $X$.
For two $G$-orbits $X_0, X_1$ in $X$ with
corresponding colored cones $(\Cm, \Fm), (\Cm', \Fm')$
we have $X_1 \subseteq \overline{X_0}$ if and only
if $(\Cm, \Fm)$ is a \em face \em of $(\Cm', \Fm')$, which
means that $\Cm$ is a face of $\Cm'$ and $\Fm = \Fm' \cap \rho^{-1}(\Cm)$.
For details and
references we refer the reader to Section~\ref{section:ng}.

It is known that any $G$-orbit $X_0$ in a spherical embedding
$G/H \hookrightarrow X$ is a
spherical homogeneous space as well
(see, for instance, \cite[Corollary~2.2]{knopsph}), and that
its closure $\overline{X_0}$ is normal and hence a spherical
variety (see, for instance, \cite[Theorem~15.20]{ti}). 
It is therefore a natural
question to determine the homogeneous spherical datum of $X_0$
and the colored fan of its closure $X_0 \hookrightarrow \overline{X_0}$
in terms of the homogeneous spherical datum of $G/H$, the colored fan of $X$,
and the colored cone of $X_0$.
The goal of this paper is to answer this question. The results
are presented in Theorems~\ref{theorem:1} and \ref{theorem:2} below.

Let $G/H$ be a spherical homogeneous space with associated
homogeneous spherical datum $(\Mm, \Sigma, S^p, \Dm^a)$ and $G/H
\hookrightarrow X$ a spherical embedding. Let $X_0$ be a $G$-orbit
in $X$ with corresponding colored cone $(\Cm, \Fm)$.
In the statement of
Theorem~\ref{theorem:1}, we denote by $\Cm^\perp$ the subspace in $\Mm_\Q$
orthogonal to $\Cm$ and by $\cone(\Sigma)$ (resp.~by $\cone(\Sigma_0)$) the convex cone spanned
by $\Sigma$ in $\Mm_\Q$ (resp.~by $\Sigma_\cl$ in $\Mm_{\cl,\Q}$).

\begin{theorem}
  \label{theorem:1}
  The
  homogeneous spherical datum $(\Mm_\cl, \Sigma_\cl, S^p_\cl,
  \Dm^a_\cl)$ of $X_0$ is described as follows:
  \begin{align*}
    \Mm_\cl = \Mm \cap \Cm^\perp{,} && \cone(\Sigma_\cl) =
    \cone(\Sigma) \cap \Mm_{\cl,\Q}\text{,} && S^p_\cl = \{\alpha \in
    S : \Dm(\alpha) \subseteq \Fm\}\text{.}
  \end{align*}
  Furthermore, for every $D_\cl \in \Dm_\cl^a$ there exists exactly
  one $D \in \Dm^a$ such that $D_\cl$ is contained in the closure of
  $D$ in $X$. This defines a bijection
  $$
    \psi\colon \Dm^a_\cl \to \{D \in \Dm^a \colon \varsigma(D) \cap
    \Sigma_\cl \ne \emptyset \}
  $$
  with the associated map $\rho_\cl^a \colon \Dm^a_\cl \to \Nm_\cl$
  given by $\rho_\cl^a = \pi \circ \rho^a \circ \psi$ where $\pi
  \colon \Nm \to \Nm_\cl$ denotes the map dual to the inclusion
  $\Mm_\cl \hookrightarrow \Mm$.
 
  Moreover, if $\dim (\Cm \cap \Vm) = \dim \Cm$, then the statement
  about the spherical roots can be refined to $\Sigma_\cl = \Sigma
  \cap \Mm_\cl$.
\end{theorem}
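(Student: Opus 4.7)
The plan is to first replace $X$ by the simple sub-embedding $X(\Cm, \Fm)$ in which $X_0$ becomes the unique closed $G$-orbit; this reduction is harmless since the homogeneous spherical datum of $X_0$ is intrinsic to $X_0$. Applying the local structure theorem to this simple piece yields a parabolic $P \supseteq B$ with Levi $L$ such that a $B$-stable affine open meeting $X_0$ decomposes as $R_u(P) \times Z$, with $Z$ an affine $L$-stable toric slice whose fan consists of the single cone $\Cm$. The closed $L$-orbit of $Z$ corresponds to $X_0$, which gives a concrete handle on all the invariants to be computed.

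For the lattice $\Mm_\cl$, every $B$-semi-invariant rational function on $X_0$ lifts to a $B$-semi-invariant rational function on $X$ that is regular and nonvanishing in a $B$-stable neighborhood of $X_0$. A $B$-semi-invariant on $G/H$ has divisor supported on the closures of colors and on the $G$-stable prime divisors of $X$, all of which contain $X_0$ in the simple embedding. The nonvanishing/regularity condition therefore translates to pairing trivially with $\rho(\Fm)$ and with the $G$-invariant generators of $\Cm$, which together span $\Cm$; conversely, every weight in $\Mm \cap \Cm^\perp$ is realised this way. Hence $\Mm_\cl = \Mm \cap \Cm^\perp$.

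For $S^p_\cl$ and the set $\Dm^a_\cl$ with its map $\rho^a_\cl$, I would show that the $B$-stable prime divisors of $X_0$ are in bijection with those colors $D \notin \Fm$ of $G/H$ whose closure meets $X_0$ in a divisor; this yields the map $\psi$, and its image is as stated because a color $D_\cl$ of $X_0$ is of type ``a'' precisely when some $\alpha \in \Sigma_\cl \cap S$ lies in $\varsigma(\psi(D_\cl))$, which coincides with $\varsigma(D_\cl)$. The identity $\alpha \in S^p_\cl \iff \Dm(\alpha) \subseteq \Fm$ follows from the equivalent characterisation of $S^p_\cl$ as the set of simple roots moving no color of $X_0$, combined with the above description of colors of $X_0$. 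Finally, $\rho^a_\cl(D_\cl) = \pi(\rho^a(\psi(D_\cl)))$ because the divisorial valuation induced by $D_\cl$ on $X_0$ is the restriction of the valuation of $\psi(D_\cl)$ to the sublattice $\Mm_\cl \hookrightarrow \Mm$.

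The cone identity $\cone(\Sigma_\cl) = \cone(\Sigma) \cap \Mm_{\cl,\Q}$ would be obtained by dualising the theorem of Knop that the valuation cone of $X_0$ equals the image $\pi(\Vm)$ under the projection $\pi\colon \Nm_\Q \to \Nm_{\cl,\Q}$ dual to $\Mm_\cl \hookrightarrow \Mm$: indeed $\pi(\Vm)^\vee = \Vm^\vee \cap \Mm_{\cl,\Q}$ together with $\cone(\Sigma) = -\Vm^\vee$ yields $\cone(\Sigma_\cl) = -\Vm_\cl^\vee = \cone(\Sigma) \cap \Mm_{\cl,\Q}$. The refined statement $\Sigma_\cl = \Sigma \cap \Mm_\cl$ under the hypothesis $\dim(\Cm \cap \Vm) = \dim \Cm$ is what I expect to be the main obstacle: primitivity in $\Mm_\cl$ of an element of $\Sigma \cap \Mm_\cl$ is automatic from primitivity in $\Mm$, so the delicate point is to exclude that a spherical root of $X_0$ could arise as a nontrivial positive combination of several $\sigma \in \Sigma$ whose sum happens to lie in $\Mm_\cl$. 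I would rule this out by using the hypothesis to show that $\pi$ carries the walls of $\Vm$ bordering the face $\Cm \cap \Vm$ to the walls of $\Vm_\cl$, and close the remaining cases, if necessary, by invoking Losev's uniqueness theorem together with the classification of spherical roots.
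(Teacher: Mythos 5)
There is a genuine gap, and it sits exactly at the hardest point of the theorem. Your claim that ``the $B$-stable prime divisors of $X_0$ are in bijection with those colors $D \notin \Fm$ whose closure meets $X_0$ in a divisor'' is false, and the whole treatment of $\psi$, $\rho^a_\cl$ and $S^p_\cl$ rests on it. For $\SL_2/T \hookrightarrow \Pb^1\times\Pb^1$ with closed orbit $\diag(\Pb^1)\cong \SL_2/B$ and $\Fm=\emptyset$, both colors $D',D''$ of $\SL_2/T$ contain the unique color of $\SL_2/B$ in their closure (two colors upstairs, one downstairs); for $(\SL_2\times\SL_2)/N_{\SL_2\times\SL_2}(\diag(\SL_2))$ the single color $D_{\alpha,\beta}$ contains two colors of the closed orbit in its closure (one upstairs, two downstairs); and a color of type $a$ can even contain one color of type $a$ and one of type $b$ of $X_0$ simultaneously. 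The correct statement, which is precisely what the theorem asserts, is that uniqueness of the color above $D_\cl$ holds only for $D_\cl\in\Dm^a_\cl$, and the resulting map is a bijection only onto $\{D\in\Dm^a:\varsigma(D)\cap\Sigma_\cl\ne\emptyset\}$, not onto all colors outside $\Fm$. Proving even this restricted statement requires a real argument: one passes to $G^{ss}\times C$, realizes $\overline{D}$ (Cartier for $D\notin\Fm$) as the divisor of a $B$-semi-invariant section of a $G$-linearized line bundle, restricts to $X_\cl$, and compares the pairings $\langle\alpha^\vee,\lambda\rangle$ of the weights via Foschi's formula to rule out that one color swallows both elements of $\Dm_\cl(\alpha)$ and to force $D\in\Dm(\alpha)$. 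Nothing in your sketch substitutes for this. Relatedly, your identity $\varsigma_\cl(D_\cl)=\varsigma(\psi(D_\cl))$ should be $\varsigma_\cl(D_\cl)=\varsigma(\psi(D_\cl))\cap\Sigma_\cl$, your derivation of $S^p_\cl$ inherits the flaw (the inclusion $S^p_\cl\subseteq\{\alpha:\Dm(\alpha)\subseteq\Fm\}$ is obtained in the paper by a dimension count via $\dim X_\cl=\rk\Mm-\dim\Cm+\dim G/P_{S'}$, not by tracking colors), and the one-line justification of $\rho^a_\cl=\pi\circ\rho^a\circ\psi$ ignores a possible multiplicity: restriction of valuations a priori only gives $\rho_\cl(D_\cl)=k\,\pi(\rho(\psi(D_\cl)))$ with $k\in\Q_{\ge0}$, and $k=1$ is extracted from the type-$a$ normalization $\langle\rho,\alpha\rangle=1$.

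Two further points. The refined statement $\Sigma_\cl=\Sigma\cap\Mm_\cl$ under $\dim(\Cm\cap\Vm)=\dim\Cm$ is not proved in your proposal; ``invoke Losev's uniqueness theorem together with the classification of spherical roots'' is not an argument. The paper's route is elementary: reduce to $(\Cm,\Fm)=(\Cm\cap\Vm,\emptyset)$, show for each $\gamma\in\Sigma$ with $\langle\Cm,\gamma\rangle\subseteq\Q_{\le0}$ that either $\gamma\in\Sigma_\cl$ or $\Sigma_\cl\subseteq\cone(\Sigma\setminus\{\gamma\})$ (using $\Cm^\circ\cap\Vm\ne\emptyset$), intersect over $\gamma\notin\Sigma_\cl$, and conclude by linear independence of $\Sigma$. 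Finally, your appeal to the local structure theorem with ``an affine $L$-stable \emph{toric} slice whose fan consists of the single cone $\Cm$'' is only valid for toroidal (colorless) simple embeddings; in the presence of colors the slice is merely an affine $L$-spherical variety. The parts of your proposal that do go through are the reduction to the simple embedding, the identification $\Mm_\cl=\Mm\cap\Cm^\perp$ (which the paper simply cites from Knop), and the dualization $\cone(\Sigma_\cl)=-\pi(\Vm)^\vee=\cone(\Sigma)\cap\Mm_{\cl,\Q}$ --- though note that $\Vm_\cl=\pi(\Vm)$ is itself assembled in the paper from the Brion--Pauer result for elementary embeddings plus Knop's functoriality of valuation cones under dominant morphisms, rather than being a single quotable theorem.
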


The following Theorem~\ref{theorem:2} generalizes a well-known result
from the theory of toric varieties (see, for instance,
\cite[Proposition~3.2.7]{cls}).

\begin{theorem}
  \label{theorem:2}
  In the situation of Theorem~\ref{theorem:1}, 
  we denote the set of colors of $G/H$ (resp.~of
  $X_0$) by $\Dm$ (resp.~by $\Dm_0$) and the associated map to
  $\Pm(S)$ by $\varsigma$ (resp.~by $\varsigma_0$).  We define the map
  $\Phi\colon \Pm(\Dm) \to \Pm(\Dm_0)$ by setting
  $$
    \Phi(\Fm') \coloneqq \psi^{-1}(\Fm') \cup
    \bigcup_{\alpha \in S : \Dm(\alpha)\subseteq \Fm'} \Dm_\cl(\alpha)\text{.}
  $$
  The colored cones in the colored fan of $X_0 \hookrightarrow \overline{X_0}$
  are in bijection with the colored cones $(\Cm', \Fm')$ 
  in the colored fan of $G/H \hookrightarrow X$ such that $(\Cm, \Fm)$
  is a face of $(\Cm', \Fm')$. Explicitly, the colored cone 
  of $X_0 \hookrightarrow \overline{X_0}$ corresponding to
  $(\Cm', \Fm')$ is given by $(\pi(\Cm'), \Phi(\Fm'))$.
\end{theorem}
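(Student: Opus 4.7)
The plan is to combine the Luna--Vust orbit-cone correspondences on both sides with Theorem~\ref{theorem:1}. Since $\overline{X_0}$ is closed and $G$-stable, its $G$-orbits are precisely the $G$-orbits $X_1$ of $X$ contained in $\overline{X_0}$. By the face relation recalled in the introduction, these biject with the colored cones $(\Cm', \Fm')$ of the colored fan of $G/H \hookrightarrow X$ having $(\Cm, \Fm)$ as a face, while the orbit-cone correspondence for $X_0 \hookrightarrow \overline{X_0}$ bijects the same orbits with the colored cones of the colored fan of $\overline{X_0}$. Composing yields the desired bijection, and it remains to identify the colored cone attached to $X_1$ in $\overline{X_0}$ as $(\pi(\Cm'), \Phi(\Fm'))$.

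For the color part, I would fix $X_1$ with colored cone $(\Cm', \Fm')$ and test, for each $D_\cl \in \Dm_\cl$, whether $X_1 \subseteq \overline{D_\cl}$ in $\overline{X_0}$. Type a colors $D_\cl \in \Dm_\cl^a$ are controlled by the bijection $\psi$ of Theorem~\ref{theorem:1}: the inclusion $D_\cl \subseteq \overline{\psi(D_\cl)}$ in $X$, combined with the fact that $\overline{D_\cl}$ is an irreducible component of $\overline{\psi(D_\cl)} \cap \overline{X_0}$ and that $X_1$ is $G$-invariant, gives $D_\cl \in \Fm_1^\cl$ iff $\psi(D_\cl) \in \Fm'$, accounting for the summand $\psi^{-1}(\Fm')$ of $\Phi(\Fm')$. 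For $D_\cl \in \Dm_\cl(\alpha)$ with $\alpha \in S \setminus S_\cl^p$, I would show $D_\cl \in \Fm_1^\cl$ iff $\Dm(\alpha) \subseteq \Fm'$: when every color of $\Dm(\alpha)$ contains $X_1$, one uses $P_\alpha \cdot \overline{D_\cl} = \overline{X_0}$ together with the $B$-invariance of $X_1$ to force $X_1 \subseteq \overline{D_\cl}$; in the opposite direction, if some $D \in \Dm(\alpha)$ misses $X_1$, a local structure argument around a generic point of $X_1$ produces a $P_\alpha$-translate avoiding $\overline{D_\cl}$.

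For the cone part, I would use the standard descriptions of $\Cm'$ and $\Cm_1^\cl$ by their generators (colors and $G$-invariant divisor valuations). Every $G$-invariant prime divisor $Y_\cl \subseteq \overline{X_0}$ containing $X_1$ arises as a component of $Y \cap \overline{X_0}$ for some $G$-invariant prime divisor $Y \subseteq X$ whose ray lies in $\Cm' \setminus \Cm$, and by the very definition of $\pi$ as the dual of $\Mm_\cl \hookrightarrow \Mm$ the valuations satisfy $v_{Y_\cl} = \pi(v_Y)$ up to a positive scalar. For the colored generators, Theorem~\ref{theorem:1} furnishes $\pi(\rho(D)) = \rho_\cl(\psi^{-1}(D))$ in the type a case, while Luna's formulas $\rho(D) = \alpha^\vee|_\Mm$ (with a factor $1/2$ in the type a' case) combined with $\pi(\alpha^\vee|_\Mm) = \alpha^\vee|_{\Mm_\cl}$ handle the non-type-a colors in $\Phi(\Fm')$. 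For type a colors $D \in \Fm'$ outside the image of $\psi$ (i.e.\ with $\varsigma(D) \cap \Sigma_\cl = \emptyset$), one combines $\rho(D) + \rho(D') = \alpha^\vee|_\Mm$ for the companion color $D'$ with the color identification above to see that $\pi(\rho(D))$ already lies in the cone spanned by the generators of $\Cm_1^\cl$. The principal obstacle is the non-type-a color identification: one must translate the combinatorial condition $\Dm(\alpha) \subseteq \Fm'$ into a geometric containment, and this (together with the delicate matching of cone generators above) is best approached by a careful local analysis via the local structure theorem in a $B$-stable affine neighborhood of a generic point of $X_1$, controlling the $B$- and $P_\alpha$-actions.
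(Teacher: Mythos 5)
Your skeleton (run the orbit--cone correspondence on both sides, reduce to identifying the colored cone of a fixed orbit $X_1 \subseteq \overline{X_0}$) matches the paper, but both halves of your identification have genuine gaps. On the cone side, your premise that every $G$-invariant prime divisor of $\overline{X_0}$ through $X_1$ is a component of $Y \cap \overline{X_0}$ for a $G$-invariant prime divisor $Y \subseteq X$ is false in general: such divisors correspond to colored cones of $\Ff(X)$ having $(\Cm,\Fm)$ as a facet, and the associated $G$-orbits need not have codimension one in $X$ (this already fails for toric varieties, which is why \cite[Proposition~3.2.7]{cls} is phrased via the star of $\sigma$ rather than via rays of the original fan). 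The paper avoids generator-matching entirely: it computes $(\Cm'_\cl)^\vee \cap \Mm_\cl$ as the set of weights $\chi$ whose $B$-semi-invariant function is defined near $X_1$, shows this equals $(\Cm')^\vee \cap \Mm_\cl$ using the extension property of Theorem~\ref{th:knb}(d), and dualizes to get $\Cm'_\cl = \pi(\Cm')$.

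On the color side, your claimed equivalence $D_\cl \in \Fm'_\cl \Leftrightarrow \psi(D_\cl) \in \Fm'$ for $D_\cl \in \Dm^a_\cl$ is only easy in one direction. The implication $\psi(D_\cl) \in \Fm' \Rightarrow X_1 \subseteq \overline{D_\cl}$ does \emph{not} follow from $\overline{D_\cl}$ being a component of $\overline{\psi(D_\cl)} \cap \overline{X_0}$: that intersection can contain further colors (see Table~\ref{tab:int} and Example~\ref{ex:new_sph_root_2}, where $\overline{D''} \cap X_\cl = D''_\cl + D_{\cl,\beta_1}$), and an irreducible $X_1$ lying in the union of components need not lie in the one you want. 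The paper proves this direction by a pairing argument: assuming $D_\cl \notin \Fm'_\cl$ forces $\langle \Cm'_\cl, \alpha' \rangle \subseteq \Q_{\le 0}$ for $\alpha' \in \varsigma_\cl(D_\cl)$, while $\pi(\rho(\psi(D_\cl))) \in \pi(\Cm') = \Cm'_\cl$ pairs to $1$ with $\alpha'$ by Proposition~\ref{prop:apv} --- so the cone computation must come first. Finally, for the non-type-$a$ colors you defer to an unspecified local structure analysis; the paper's key observation, which you are missing, is that Theorem~\ref{theorem:1} applied to $X_1$ inside the two embeddings $G/H \hookrightarrow X$ and $X_\cl \hookrightarrow \overline{X_\cl}$ gives $\{\alpha : \Dm(\alpha) \subseteq \Fm'\} = S^p_1 = \{\alpha : \Dm_\cl(\alpha) \subseteq \Fm'_\cl\}$, which settles all such colors at once with no geometry.
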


The remaining sections of this paper are organized as follows.
In Section~\ref{section:ng}, we fix the notation and recall some known results.
Then we prove Theorem~\ref{theorem:1} and Theorem~\ref{theorem:2}
in Section~\ref{section:p1} and Section~\ref{section:p2} respectively.
In Section~\ref{section:ico}, we briefly determine the scheme-theoretic intersection
of the closure of a color with an orbit.
Finally, we present some examples in Section~\ref{section:ex}.

\subsection*{List of general notation}
\renewcommand{\descriptionlabel}[1]{\hspace\labelsep #1}
\begin{description}[leftmargin=8em,style=nextline]
\item[{$\Pm(A)$}] power set of a set $A$,
\item[{$\C[ X ]$}] algebra of regular functions on a variety $X$,
\item[{$\C( X )$}] field of rational functions on an irreducible variety $X$,
\item[{$\X( G )$}] character lattice of a connected algebraic group $G$,
\item[{$K^\times$}] multiplicative group of a field $K$,
\item[{$V^*$}] dual vector space to a vector space $V$,
\item[{$A^\perp$}] orthogonal complement of a subset $A$ of some vector space $V$,
\ie $\{ v^* \in V^* : \langle v^*, v \rangle = 0 \text{ for every
 } v \in A\}$,
\item[{$\cone(A)$}] convex cone generated by a subset $A$ of some vector space,
\item[{$\Cm^\vee$}] dual cone to a convex cone $\Cm$ in some vector space $V$, \ie
$\{ v^* \in V^* : \langle v^*, v \rangle \ge 0 \text{ for every } v \in \Cm \}$,
\item[{$\Cm^\circ$}] relative interior of a convex polyhedral cone $\Cm$ in some finite-dimensional vector space, \ie topological interior of $\Cm$ in the affine span of $\Cm$.
\end{description}

\section{Generalities}
\label{section:ng}

The notation introduced here will be valid throughout the paper.
Another purpose of this section is to gather some known results. 

Let $G$ be a connected reductive complex algebraic group and $B
\subseteq G$ a Borel subgroup. We choose a maximal torus $T \subseteq
B$, denote by $R$ the associated root system lying in the character
lattice $\X(T)$, and write $S \subseteq R$ for the set of simple roots
corresponding to $B$.  The character lattices $\X(B)$ and $\X(T)$ are
naturally identified via restricting characters from $B$ to $T$.

Let $H \subseteq G$ be a spherical subgroup. We denote by $\Mm
\subseteq \X(B)$ the weight lattice of $B$-semi-invariants in the
function field $\C(G/H)$ and by $\Nm \coloneqq \Hom(\Mm, \Z)$ the dual
lattice considered together with the natural pairing $\langle \cdot,
\cdot \rangle \colon \Nm \times \Mm \to \Z$.

A \em discrete valuation \em on $\C(G/H)$ is a map $\nu\colon
\C(G/H)^\times \to \Q$ satisfying the following properties:
\begin{enumerate}
\item $\nu(f_1 + f_2) \ge \min \{\nu(f_1),\nu(f_2)\}$ for $f_1, f_2, f_1 + f_2 \in \C(G/H)^\times$,
\item $\nu(f_1f_2) = \nu(f_1) + \nu(f_2)$ for $f_1, f_2 \in \C(G/H)^\times$,
\item $\nu(\C^\times) = \{0\}$.
\end{enumerate}
We denote by $\Vm$ the set of $G$-invariant discrete valuations on
$\C(G/H)$ and define the map $\iota \colon \Vm \to \Nm_\Q \coloneqq
\Nm \otimes_\Z \Q$ by $\langle \iota(\nu), \chi \rangle \coloneqq
\nu(f_\chi)$ where $f_\chi \in \C(G/H)$ is $B$-semi-invariant of
weight $\chi \in \Mm$ (such a rational function $f_\chi$ is unique up
to a constant factor). The map $\iota$ is injective (see~\cite[7.4, Proposition]{lunavust}
or \cite[Corollary~1.8]{knopsph})
and $\Vm \subseteq
\Nm_\Q$ is a cosimplicial cone (see~\cite{brg}).  Hence there is a
uniquely determined linearly independent set $\Sigma \subseteq \Mm$ of
primitive elements such that
$$
\Vm = \bigcap_{\gamma \in \Sigma} \{v \in \Nm_\Q : \langle v, \gamma \rangle \le 0\}\text{.}$$
The elements of
$\Sigma$ are called the \em spherical roots \em and $\Vm$ is called
the \em valuation cone \em of $G/H$.

Finally, we consider the set $\Dm$ of $B$-invariant prime divisors in
$G/H$.  The elements of $\Dm$ are called the \em colors \em of $G/H$.
For every $D \in \Dm$ we consider the set $\varsigma(D) \subseteq S$
of simple roots $\alpha$ such that 
the corresponding minimal parabolic subgroup $P_\alpha \subseteq G$ moves
$D$, \ie $P_\alpha \cdot D \ne D$. This yields a map $\varsigma \colon
\Dm \to \Pm(S)$.  We also consider the map $\rho \colon \Dm \to \Nm$
defined by $\langle \rho(D), \chi \rangle \coloneqq \nu_D(f_\chi)$
where $\nu_D$ is the discrete valuation induced by the prime
divisor $D$.  We regard $\Dm$ as an abstract finite set equipped with the two
maps $\varsigma$ and $\rho$.

\begin{theorem}[{\cite[Theorem~1]{losuniq}}]
  The triple $(\Mm, \Sigma, \Dm)$ uniquely determines the spherical
  subgroup $H \subseteq G$ up to conjugation.
\end{theorem}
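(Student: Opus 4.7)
The plan is to split the proof into two parts: first recover a canonical overgroup $\overline H \supseteq H$ from the datum, and then pin down $H$ inside $\overline H$.

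\emph{Step 1 (spherical closure).} Define $\overline H \subseteq G$ to be the subgroup of $G$ fixing every color of $G/H$ setwise. Standard arguments show that $\overline H$ is spherical and that $\overline H / H$ is diagonalizable, and that the datum of $G/\overline H$ can be extracted from $(\Mm, \Sigma, \Dm)$: the spherical roots and $S^p$ are unchanged, while the sublattice $\Mm_{\overline H} \subseteq \Mm$ of $B$-weights of semi-invariants on $G/\overline H$ is cut out by the condition that the diagonalizable quotient $\overline H / H$ acts trivially, which can be read off from the map $\rho\colon \Dm \to \Nm$ and the combinatorics of $\varsigma$.

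\emph{Step 2 (uniqueness for the spherical closure).} The subgroup $\overline H$ is sober, i.e., $N_G(\overline H) = \overline H$. By a theorem of Knop, $G/\overline H$ then admits a wonderful compactification, whose combinatorial invariants as a wonderful $G$-variety are encoded by the triple $(\Mm_{\overline H}, \Sigma, S^p)$ together with the restriction of $\rho$ to the colors of $G/\overline H$. Uniqueness of wonderful $G$-varieties with a prescribed spherical system (the \emph{Luna conjecture}, established uniformly by Losev via the classification of multiplicity-free Hamiltonian $G$-manifolds, and independently by the cumulative case-by-case work of Bravi, Cupit-Foutou, Luna and Pezzini) then identifies $\overline H$ up to $G$-conjugation.

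\emph{Step 3 (reconstructing $H$).} With $\overline H$ fixed inside $G$, the finite diagonalizable quotient $\overline H/H$ has character group canonically isomorphic to $\Mm/\Mm_{\overline H}$, and $H$ is cut out inside $\overline H$ as the kernel of the resulting surjection. The refined data of how the colors of $G/H$ project onto those of $G/\overline H$ — carried by $\Dm$ together with $\rho$ — pins down this surjection, and therefore $H$ itself.

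The main obstacle is Step 2. Losev's proof of the Luna conjecture for sober subgroups is technically demanding: one must build a precise dictionary between the algebro-combinatorial datum and the symplectic invariants of the corresponding multiplicity-free Hamiltonian $G$-manifold, and then invoke Knop's uniqueness theorem in the Hamiltonian category. By contrast, Steps 1 and 3 are comparatively formal rearrangements of the datum once Step 2 is in hand.
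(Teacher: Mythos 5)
This statement is not proved in the paper at all: it is quoted verbatim from Losev's uniqueness paper \cite{losuniq}, so there is no internal argument to compare yours against. Judged on its own terms, your outline follows the well-known Luna reduction (pass to the spherical closure, treat the spherically closed case via wonderful varieties, then recover $H$ by an ``augmentation''), which is a legitimate strategy in principle, but as written it is a map of the terrain rather than a proof, and it contains concrete errors.

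In Step 1, the spherical closure must be defined as the kernel of the action of $N_G(H)/H$ on $\Dm$ by $G$-equivariant automorphisms of $G/H$ (in particular $\overline H \subseteq N_G(H)$); ``the subgroup of $G$ fixing every color'' is not the right object, since a general element of $G$ does not act on $G/H$ so as to permute colors. More seriously, the claim that the spherical roots of $G/\overline H$ coincide with $\Sigma$ is false: passing to the spherical closure can replace a root $\gamma$ by $2\gamma$ (already for $T \subseteq N_{\SL_2}(T)$, where $\Sigma=\{\alpha\}$ becomes $\{2\alpha\}$), and controlling exactly which roots double is part of the substance of Luna's reduction, not a formality. In Step 3, the quotient $\overline H/H$ is diagonalizable but need not be finite (take $H$ a maximal unipotent subgroup, so that $\overline H=B$ and $\overline H/H\cong T$), and the assertion that $\Dm$ and $\rho$ ``pin down'' the surjection $\X(\overline H/H)\twoheadrightarrow \Mm/\Mm_{\overline H}$ up to conjugation is precisely Luna's theory of augmentations, which requires an argument. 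Finally, Step 2 as you set it up is close to circular: the uniform proof of the uniqueness part of Luna's conjecture for wonderful varieties is due to Losev and appears in the very paper being cited, where it is deduced from (a version of) the homogeneous uniqueness theorem you are trying to prove — Losev's actual argument works directly with the homogeneous space, by localization at simple roots and induction, not by first passing through wonderful varieties or through Hamiltonian geometry (the latter belongs to his proof of the Knop conjecture, a different result). The case-by-case literature you invoke as an alternative did not, at the time, cover all groups. So the one load-bearing step of your outline imports a theorem that is essentially equivalent to the statement itself.
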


The triple $(\Mm, \Sigma, \Dm)$ still contains some redundant information,
which we are going to briefly explain in the following
paragraphs. For details, we refer the reader to \cite[2.7, 3.4]{Luna:cc}
and \cite[2.3]{Luna:typea} or, as a general reference,
to \cite[Sections~30.10, 30.11]{ti}. 

Recall that $\Dm(\alpha)$ is the set of colors moved by $P_{\alpha}$,
\ie $D \in \Dm(\alpha)$ if and only if $\alpha \in \varsigma(D)$. The
colors are divided into three types. We denote by $\Dm^a$ the set of
colors satisfying $\varsigma(D) \cap \Sigma \ne \emptyset$. In this
case, we even have $\varsigma(D) \subseteq \Sigma$, and there is
no bound on the number of elements in the set
$\varsigma(D)$.  The colors $D \in
\Dm^a$ have the property that for every $\gamma \in \Sigma$ we have
$\langle \rho(D), \gamma \rangle \le 1$ with $\langle \rho(D), \gamma
\rangle = 1$ if and only if $\gamma \in \varsigma(D)$. Furthermore, for
$\alpha \in \Sigma \cap S$ we have $|\Dm(\alpha)| = 2$ and, if
$\Dm(\alpha) = \{D', D''\}$, we have $\rho(D') + \rho(D'') =
\alpha^\vee|_{\Mm}$. Here (and elsewhere), $\alpha^\vee$ denotes the
coroot of $\alpha$.

We denote by $\Dm^{2a}$ the set of colors satisfying $\varsigma(D)
\cap \tfrac{1}{2}\Sigma \ne \emptyset$. In this case, we have
$|\varsigma(D)| = 1$, \ie $D \in \Dm(\alpha)$ for exactly one $\alpha
\in S$, and $\rho(D) = \tfrac{1}{2}\alpha^\vee|_{\Mm}$.

We denote by $\Dm^b$ the set of the remaining colors, \ie those
satisfying $\varsigma(D) \setminus (\Sigma \cup \tfrac{1}{2}\Sigma)
\ne \emptyset$. In this case, we have $|\varsigma(D)| \le 2$. We
further subdivide the colors in $\Dm^b$: Let $\Dm^{b1} \coloneqq \{D
\in \Dm^b : |\varsigma(D)| = 1\}$ and $\Dm^{b2} \coloneqq \{D \in
\Dm^b : |\varsigma(D)| = 2\}$.

For a color $D \in \Dm^{b1}$ we have $D\in \Dm(\alpha)$ for exactly
one $\alpha \in S$ and $\rho(D) = \alpha^\vee|_{\Mm}$.  A color $D \in
\Dm^{b2}$ with $\varsigma(D) = \{\alpha, \beta\}$ exists if and only
if $\alpha$ and $\beta$ are orthogonal and $\alpha + \beta \in \Sigma
\cup 2\Sigma$, and then we have $\rho(D) = \alpha^\vee|_{\Mm} =
\beta^\vee|_{\Mm}$.  The union $\Dm = \Dm^a \cup \Dm^{2a} \cup
\Dm^{b1} \cup \Dm^{b2}$ is disjoint.

For every $\alpha \in S$ we have $|\Dm(\alpha)| \le 2$ with
$|\Dm(\alpha)| = 2$ if and only if $\alpha \in \Sigma$, which is
equivalent to $\emptyset \ne \Dm(\alpha) \subseteq \Dm^a$.  In this
case, we have $\Dm(\alpha) = \{D \in \Dm : \langle \rho(D), \alpha
\rangle > 0\}$.  We define
$$
  S^p \coloneqq \{\alpha \in S : \Dm(\alpha) = \emptyset\} = S
  \setminus \bigcup_{D\in\Dm} \varsigma(D)\text{.}
$$
We write $P_A$ for the standard parabolic subgroup of $G$ containing
$B$ and corresponding to a subset $A \subseteq S$. Then $P_{S^p}$ is
the stabilizer of the open $B$-orbit in $G/H$.

The quadruple $(\Mm, \Sigma, S^p, \Dm^a)$ is called the
\emph{homogeneous spherical datum} of $G/H$.  Here, we regard $\Dm^a$
as an abstract finite set equipped only with the map $\rho^a \coloneqq
\rho|_{\Dm^a}$.  Homogeneous spherical data can be described
combinatorially. See \cite[Definition~30.21]{ti} for details and
references.

The full set of colors $\Dm$ may be recovered from the quadruple
$(\Mm, \Sigma, S^p, \Dm^a)$ as follows.  Define $\Dm^{2a} \coloneqq
\{D_{2\alpha} : \alpha \in \tfrac{1}{2}\Sigma \cap S\}$ where the
$D_{2\alpha}$ are pairwise distinct elements.
For $\alpha,\beta \in S$, write $\alpha \sim
\beta$ if $\alpha$ and $\beta$ are orthogonal and $\alpha + \beta \in
\Sigma \cup 2\Sigma$.  Define
$$
  \Dm^{b1} \coloneqq \{D_\alpha : \alpha \in S \setminus (S^p \cup
  \Sigma \cup \tfrac{1}{2}\Sigma) \text{ and there exists no $\beta
    \in S$ with $\alpha \sim \beta$} \}
$$
and $\Dm^{b2} \coloneqq \{D_{\alpha,\beta} : \alpha,\beta \in S \text{
  and } \alpha \sim \beta\}$ where the $D_\alpha$ and the
$D_{\alpha,\beta}$ are pairwise distinct elements. Let $\Dm \coloneqq
\Dm^a \cup \Dm^{2a} \cup \Dm^{b1} \cup \Dm^{b2}$ be a disjoint union
and define the map $\rho\colon \Dm \to \Nm$ by setting
\begin{align*}
  \rho(D) \coloneqq
  \begin{cases}
    \rho^a(D) &\text{for $D \in \Dm^a$,}\\
    \tfrac{1}{2}\alpha^\vee|_{\Mm} &\text{for $D = D_{2\alpha}\in \Dm^{2a}$,}\\
    \alpha^\vee|_{\Mm} &\text{for $D = D_{\alpha}\in \Dm^{b1}$,}\\
    \alpha^\vee|_{\Mm} = \beta^\vee|_{\Mm} &\text{for $D =
      D_{\alpha,\beta}\in \Dm^{b2}$.}
  \end{cases}
\end{align*}
Finally, define the map $\varsigma: \Dm \to \Pm(S)$ by setting
\begin{align*}
  \varsigma(D) \coloneqq
  \begin{cases}
    \{\alpha\in\Sigma\cap S : \langle \rho(D), \alpha\rangle= 1\} &\text{for $D \in \Dm^a$,}\\
    \{\alpha\} &\text{for $D = D_{2\alpha}\in \Dm^{2a}$,}\\
    \{\alpha\} &\text{for $D = D_{\alpha}\in \Dm^{b1}$,}\\
    \{\alpha, \beta\} &\text{for $D = D_{\alpha,\beta}\in \Dm^{b2}$.}
  \end{cases}
\end{align*}
We obtain the following statement.

\begin{prop}
  The quadruple $(\Mm, \Sigma, S^p, \Dm^a)$ uniquely determines $(\Mm,
  \Sigma, \Dm)$.
\end{prop}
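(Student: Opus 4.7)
The proof is essentially a verification that the explicit reconstruction spelled out in the paragraphs preceding the proposition inverts the extraction of the quadruple $(\Mm,\Sigma,S^p,\Dm^a)$ from the triple $(\Mm,\Sigma,\Dm)$. Since $\Mm$ and $\Sigma$ appear unchanged in the quadruple, only the triple $(\Dm,\rho,\varsigma)$ needs to be recovered. By the classification of colors recalled earlier, $\Dm$ decomposes as the disjoint union $\Dm^a \sqcup \Dm^{2a} \sqcup \Dm^{b1} \sqcup \Dm^{b2}$, and each piece is intrinsically characterized by the position of $\varsigma(D)$ relative to $\Sigma$ and $\tfrac{1}{2}\Sigma$ and by the cardinality of $\varsigma(D)$. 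The piece $\Dm^a$ is given, so the task reduces to recovering the three non-$a$ pieces together with the restrictions of $\rho$ and $\varsigma$.

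For this, I would exhibit a bijection between each of $\Dm^{2a}$, $\Dm^{b1}$, $\Dm^{b2}$ and the index set appearing in the reconstruction. The structural facts summarized earlier, in particular $|\Dm(\alpha)| = 2$ iff $\alpha \in \Sigma$ (and then $\Dm(\alpha) \subseteq \Dm^a$), $\Dm(\alpha) = \emptyset$ iff $\alpha \in S^p$, and $|\Dm(\alpha)| = 1$ otherwise, together with the formulas for $\varsigma(D)$ on each type, make $D \mapsto \varsigma(D)$ a bijection of $\Dm^{2a}$ with the singletons $\{\alpha\}$ for $\alpha \in \tfrac{1}{2}\Sigma \cap S$; of $\Dm^{b2}$ with the unordered pairs $\{\alpha,\beta\}$ satisfying $\alpha \sim \beta$; and of $\Dm^{b1}$ with the singletons $\{\alpha\}$ for $\alpha \in S \setminus (S^p \cup \Sigma \cup \tfrac{1}{2}\Sigma)$ not involved in any relation $\alpha \sim \beta$. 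The values of $\rho$ on each piece then match the reconstruction formulas ($\tfrac12\alpha^\vee|_\Mm$, $\alpha^\vee|_\Mm$, and $\alpha^\vee|_\Mm = \beta^\vee|_\Mm$ respectively) directly from the corresponding statements in the classification.

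The main point requiring a little care is checking surjectivity of these bijections onto their intended index sets, i.e., that each combinatorial label actually corresponds to an existing color. For $\Dm^{b1}$ and $\Dm^{b2}$ this is immediate from the condition $\alpha \notin S^p$, which forces $\Dm(\alpha) \neq \emptyset$, combined with the type classification and the distinction by $|\varsigma(D)|$. For $\Dm^{2a}$ the necessary ingredient is that $\tfrac{1}{2}\Sigma \cap S$ is disjoint from $S^p$, which follows from the identity $S^p = S \setminus \bigcup_{D \in \Dm} \varsigma(D)$ together with the fact that a spherical root of the form $2\alpha$ is always witnessed by a color moved by $P_\alpha$; this is part of the structural content already recalled. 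Combining the bijections with the matching values of $\rho$ and $\varsigma$ yields the identification of the reconstructed data with $(\Mm,\Sigma,\Dm)$.
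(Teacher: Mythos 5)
Your argument is correct and follows essentially the same route as the paper, which states this proposition as an immediate consequence of the explicit reconstruction recipe given in the preceding paragraphs; your write-up simply makes the type-by-type verification (including the surjectivity onto the index sets) explicit, using exactly the structural facts about colors recalled there. No discrepancy to report.
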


We will use the following result of Foschi
(\cite[Section~2.2, Theorem~2.2]{foschi}, see also {\cite[Lemma~30.24]{ti}}).

\begin{prop}
  \label{prop:domcoeff}
  Let $D \in \Dm$ be a color and let $L$ be a $G$-linearized line
  bundle on $G/H$ with a section $s \in H^0(G/H, L)$ such that $\Div s
  = D$. Then the section $s$ is $B$-semi-invariant of some weight
  $\lambda \in \X(B)$, and for $\alpha \in S$ we have
  \begin{align*}
    \langle \alpha^\vee, \lambda \rangle =
    \begin{cases}
      1 & \text{ if $D \in \Dm(\alpha)$ and $D \in \Dm^a \cup \Dm^b$,} \\
      2 & \text{ if $D \in \Dm(\alpha)$ and $D \in \Dm^{2a}$,}\\
      0 & \text{ if $D \notin \Dm(\alpha)$.}
    \end{cases}
  \end{align*}
\end{prop}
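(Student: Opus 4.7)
My plan has two stages: first, establishing that $s$ is $B$-semi-invariant; then, computing $\langle \alpha^\vee, \lambda \rangle$ for each simple root $\alpha$.

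For the first stage, since $L$ is $G$-linearized and the divisor $D = \Div s$ is $B$-stable, for every $b \in B$ the translate $b \cdot s$ satisfies $\Div(b \cdot s) = b \cdot D = D$. Hence $(b \cdot s)/s$ is a nowhere-vanishing regular function on $G/H$, \ie an element of $\C[G/H]^\times$. By Rosenlicht's theorem, $\C[G/H]^\times / \C^\times$ is a free abelian group of finite rank, hence discrete, while $b \mapsto [(b \cdot s)/s]$ is a morphism of varieties $B \to \C[G/H]^\times / \C^\times$. As $B$ is connected and the map sends $e$ to the trivial class, it is identically trivial, so $(b \cdot s)/s =: \lambda(b)$ lies in $\C^\times$ for every $b$, and the cocycle identity promotes $\lambda$ to a character of $B$.

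For the second stage, the key observation is that whenever an intermediate subgroup $B \subseteq Q \subseteq G$ stabilizes $D$, the same argument shows $s$ to be $Q$-semi-invariant, so $\lambda$ extends to a character of $Q$. When $D \notin \Dm(\alpha)$, we may take $Q = P_\alpha$; since a character of $B$ extends to $P_\alpha$ if and only if it vanishes on $\alpha^\vee(\C^\times)$, this yields $\langle \alpha^\vee, \lambda \rangle = 0$.

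When $D \in \Dm(\alpha)$, the parabolic $P_\alpha$ no longer stabilizes $D$, so I would reduce to a rank-one computation via the rank-one subgroup $G_\alpha \subseteq G$ generated by $U_{\pm\alpha}$. Choosing a point $x_0$ in the open $B$-orbit of $G/H$ and forming the orbit map $\phi\colon G_\alpha \to G/H$, $g \mapsto g \cdot x_0$, the pullback $\phi^* s$ is a $(B \cap G_\alpha)$-semi-invariant section of $\phi^* L$ on the $G_\alpha$-orbit closure through $x_0$, and its weight on $T \cap G_\alpha = \alpha^\vee(\C^\times)$ is by construction $\langle \alpha^\vee, \lambda \rangle$. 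A case analysis based on the type of $D$ then identifies this integer by reading off the local intersection of $\phi(G_\alpha)$ with $D$: for a color of type $a$, $b1$, or $b2$ the orbit closure is a $\Pb^1$ meeting $D$ transversely in one point, yielding $1$, while for type $2a$ one obtains a double-cover picture yielding $2$. The main technical obstacle is this rank-one classification; in particular, for type $b2$ one must check that the same value arises for either of the two simple roots moving $D$, a consistency reflecting the identity $\alpha^\vee|_{\Mm} = \beta^\vee|_{\Mm}$ characterizing those colors.
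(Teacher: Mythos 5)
The paper does not prove this proposition at all: it is imported from Foschi's thesis (see also Timashev's book, Lemma~30.24), so there is no in-paper argument to compare with. The standard proof in those references is indeed a reduction to rank one, which is the strategy you propose, and your first stage (Rosenlicht's unit theorem plus connectedness of $B$, then the cocycle identity) as well as your treatment of the case $D \notin \Dm(\alpha)$ (extend $\lambda$ to a character of $P_\alpha$, hence $\langle \alpha^\vee, \lambda \rangle = 0$) are correct.

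The gap is in the case $D \in \Dm(\alpha)$, which is the entire content of the proposition. Your geometric claim that $\overline{G_\alpha \cdot x_0}$ is a $\Pb^1$ meeting $D$ transversely in one point for $D$ of type $a$, $b1$, $b2$ is false. For a color of type $a$ the image of $(P_\alpha)_{x_0}$ in $\Aut(P_\alpha/(B\cap P_\alpha)) \cong \mathrm{PGL}_2$ is a maximal torus, so $G_\alpha \cdot x_0$ surjects onto $\mathrm{PGL}_2/T$ and is at least two-dimensional; for type $2a$ it surjects onto $\mathrm{PGL}_2/N(T)$. Even in type $b$ the orbit need not be a curve: for $G/H = \SL_2/U$ the unique color lies in $\Dm^{b1}$ and $G_\alpha \cdot x_0 = \A^2 \setminus \{0\}$. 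Hence the mechanism you rely on — the degree of $L$ on a $\Pb^1$ counts the zeros of $s$ — does not apply, and the values $1$ and $2$ are not established. What is actually required is: (i) the classification of the possible rank-one homogeneous spaces $G_\alpha \cdot x_0$, equivalently of the image of $(P_\alpha)_{x_0}$ in $\mathrm{PGL}_2$ (a maximal torus, its normalizer, or a subgroup with a dense orbit on $\Pb^1$ fixing a point), matched against the types $a$, $2a$, $b$; (ii) the verification that $s$ restricts to $G_\alpha \cdot x_0$ with multiplicity one along each component of $D \cap G_\alpha \cdot x_0$ — your \enquote{transversely} and \enquote{double-cover picture} are asserted, not proved; and (iii) the explicit weight computation for the equations of the colors of $\Pb^1$, $\A^2\setminus\{0\}$, $\mathrm{PGL}_2/T \cong \Pb^1\times\Pb^1\setminus\Delta$ and $\mathrm{PGL}_2/N(T) \cong \Pb^2\setminus(\text{conic})$, which is where the values $1,1,1,2$ actually come from. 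These steps are the substance of the statement, so as written the proposal is a correct strategy whose decisive computation is missing and whose stated geometric underpinning is incorrect.
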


A \em spherical embedding \em is a $G$-equivariant open embedding $G/H
\hookrightarrow X$ into a normal irreducible $G$-variety $X$.
According to the Luna-Vust theory (see~\cite{lunavust, knopsph}), any
spherical embedding $G/H \hookrightarrow X$ can be described by
some combinatorial data.

A spherical embedding (and the corresponding spherical variety) is
called \emph{simple} if it possesses exactly one closed $G$-orbit.  If
$G/H \hookrightarrow X$ is a simple embedding with closed orbit
$X_\cl$, we denote by $\Fm(X) \subseteq \Dm$ the set of colors whose
closure in $X$ contains $X_\cl$ and by $\Cm(X)$ the cone in $\Nm_\Q$ generated by
$\rho(\Fm(X))$ and the elements $\nu_Y \in \Vm$ for all $G$-invariant
prime divisors $Y \subseteq X$.

\begin{definition}[{\cite[Definition before Theorem~3.1]{knopsph}}]
  A \em colored cone \em is a pair $(\Cm, \Fm)$ where $\Fm \subseteq
  \Dm$ and $\Cm \subseteq \Nm_\Q$ is a cone generated by $\rho(\Fm)$
  and finitely many elements of $\Vm$ such that $\Cm^\circ \cap \Vm
  \ne \emptyset$.  A colored cone is called \em strictly convex \em if
  $\Cm$ is strictly convex and $0 \notin \rho(\Fm)$.
\end{definition}

\begin{theorem}[{\cite[8.10, Proposition]{lunavust} and
 \cite[Theorem~3.1]{knopsph}}]
  The map $X \mapsto (\Cm(X), \Fm(X))$ is a bijection between
  isomorphism classes of simple spherical embeddings $G/H
  \hookrightarrow X$ and strictly convex colored cones.
\end{theorem}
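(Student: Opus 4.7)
The plan is to prove this bijection in two halves: first, given a simple spherical embedding $G/H \hookrightarrow X$, verify that $(\Cm(X), \Fm(X))$ is indeed a strictly convex colored cone and that $X$ can be recovered from this data; second, produce a simple embedding from any strictly convex colored cone.

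For the forward direction and injectivity, the key object is a distinguished $B$-chart of the closed orbit $X_\cl$. Namely, I would let $X_B \subseteq X$ be the complement of the union of all $B$-stable prime divisors of $X$ whose closures do not contain $X_\cl$; concretely, $X_B = X \setminus \bigcup_{D \in \Dm \setminus \Fm(X)} \overline{D} \setminus \bigcup_Y Y$, where $Y$ runs over the $G$-invariant prime divisors of $X$ not containing $X_\cl$. Using Sumihiro's theorem together with the assumption that $X$ is simple, one shows that $X_B$ is $B$-stable, affine, and satisfies $G \cdot X_B = X$. The algebra $\C[X_B]$ decomposes as $\bigoplus_\chi \C \cdot f_\chi$ summed over those weights $\chi$ with $\nu_Y(f_\chi) \ge 0$ for all relevant $Y$ and $\langle \rho(D), \chi\rangle \ge 0$ for all $D \in \Fm(X)$; these conditions translate exactly to $\chi \in \Cm(X)^\vee \cap \Mm$. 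Strict convexity of $\Cm(X)$ comes from the properness of the closed orbit (a line in $\Cm(X)$ would produce a nonconstant invertible $B$-semi-invariant on $X_B$, forcing more than one closed orbit), and $0 \notin \rho(\Fm(X))$ is immediate because each $\rho(D)$ encodes a nontrivial divisorial valuation. Since $(\Cm(X), \Fm(X))$ determines $\C[X_B]$, hence $X_B$, and $X = G \cdot X_B$, this settles injectivity.

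For surjectivity, given a strictly convex colored cone $(\Cm, \Fm)$, I would set $A \coloneqq \bigoplus_{\chi \in \Cm^\vee \cap \Mm} \C \cdot f_\chi \subseteq \C(G/H)$, for a fixed choice of $B$-semi-invariant $f_\chi$ of each weight. By Gordan's lemma, the monoid $\Cm^\vee \cap \Mm$ is finitely generated, so $A$ is a finitely generated integral $B$-stable $\C$-algebra, and I form $X_B \coloneqq \Spec A$. Then I glue the $G$-translates of $X_B$ to produce a normal $G$-variety $X$ containing $G/H$ as a dense open orbit, and verify by tracing through the definitions that $\Cm(X) = \Cm$ and $\Fm(X) = \Fm$.

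The principal technical obstacle lies in this last step: one must show that the $G$-saturation of $X_B$ is separated, normal, and simple. Normality follows from the saturation of $\Cm^\vee \cap \Mm$ in $\Mm$; but separatedness and the existence of a \emph{unique} closed orbit both rely crucially on the assumption $\Cm^\circ \cap \Vm \ne \emptyset$ in the definition of a colored cone, which guarantees that there are enough $G$-invariant valuations to control the boundary and to glue $B$-charts consistently with the $G$-action. Once these foundational properties are established, matching $(\Cm(X), \Fm(X))$ back to $(\Cm, \Fm)$ is a direct computation from the $B$-semi-invariant description of $\C[X_B]$.
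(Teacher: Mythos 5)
The paper does not prove this statement: it is quoted verbatim from the literature (Luna--Vust, 8.10, and Knop, Theorem~3.1), so there is no in-paper argument to compare against, and your proposal must be judged as a reconstruction of the standard proof. Your outline (the distinguished $B$-chart $X_B$, recovery of $X$ as $G\cdot X_B$, and Gordan's lemma plus gluing for surjectivity) is indeed the route taken in those sources, and you correctly isolate the role of the hypothesis $\Cm^\circ\cap\Vm\ne\emptyset$.

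However, there is a concrete error at the heart of your injectivity argument. The algebra $\C[X_B]$ is \emph{not} $\bigoplus_\chi \C\cdot f_\chi$; only its subalgebra of $B$-semi-invariants $\C[X_B]^{(B)}$ has one-dimensional weight spaces (because the open $B$-orbit is dense). Already for $\SL_2/T\hookrightarrow\Pb^1\times\Pb^1$ the chart $X_B$ is $\A^2$ with $\C[X_B]=\C[u,v]$, in which the $T$-weight of $u$ occurs with multiplicity two. Consequently the step ``$(\Cm(X),\Fm(X))$ determines $\C[X_B]$, hence $X_B$'' does not follow from what you wrote: knowing the semi-invariants does not a priori determine the whole $B$-algebra. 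This is precisely where the real content of the theorem lies; the standard repair is the description $\C[X_B]=\{f\in\C(G/H)^\times : \nu(f)\ge 0\ \forall\,\nu\in\Vm\cap\Cm,\ \nu_D(f)\ge 0\ \forall\,D\in\Fm\}\cup\{0\}$ (Knop, Theorem~2.5; Luna--Vust, 8.6), whose proof uses the local structure theorem for the chart $X_B$, and which shows that $\C[X_B]$ depends only on $(\Cm,\Fm)$ and not on the particular set of $G$-stable divisors generating $\Cm$. A second, smaller slip: your justification of strict convexity reverses the duality. A line contained in $\Cm(X)$ forces $\Cm(X)^\vee$ into a proper subspace (so the weights of semi-invariants in $\C[X_B]$ do not span $\Mm_\Q$); it is a line in $\Cm(X)^\vee$, i.e.\ a pair $\pm\chi$ of admissible weights, that would produce an invertible semi-invariant $f_\chi$ on $X_B$. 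The correct argument shows that a non-strictly-convex $(\Cm,\Fm)$ leaves no nonconstant semi-invariant vanishing on all $B$-stable divisors of $X_B$, contradicting the existence of a closed orbit inside their union.
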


Now consider an arbitrary spherical embedding $G/H \hookrightarrow X$.
For every $G$-orbit $Y \subseteq X$ we define
$$
  X_Y \coloneqq \rleft\{x \in X : Y \subseteq \overline{G\cdot x}
  \rright\}\text{.}
$$
Then $X_Y$ is a simple spherical variety with unique closed orbit $Y$
and $X_Y \subseteq X$ is an open subset.  The spherical variety $X$ is
covered by the open subsets $X_Y$ for
$Y$ varying over the $G$-orbits in $X$.

\begin{definition}[{\cite[Definition after Lemma~3.2]{knopsph}}]
  A \em face \em of a colored cone $(\Cm, \Fm)$ is a colored cone
  $(\Cm', \Fm')$ such that $\Cm'$ is a face of $\Cm$ and $\Fm' = \Fm
  \cap \rho^{-1}(\Cm')$.  A \em colored fan \em is a nonempty
  collection $\Ff$ of strictly convex colored cones such that every
  face of a colored cone in $\Ff$ is again in $\Ff$ and for every $\nu
  \in \Vm$ there is at most one $(\Cm, \Fm) \in \Ff$ such that $\nu
  \in \Cm^\circ$.
\end{definition}

\begin{theorem}[{\cite[Theorem~3.3 and the paragraph before it]{knopsph}}]
  \label{thm:correspondence}
  The map
  $$
    X \mapsto \Ff(X) \coloneqq \{(\Cm(X_Y), \Fm(X_Y)) : Y\subseteq X
    \text{ is a $G$-orbit}\}
  $$
  is a bijection between isomorphism classes of spherical embeddings
  $G/H \hookrightarrow X$ and colored fans. Moreover, the assignment
  $Y \mapsto (\Cm(X_Y), \Fm(X_Y))$ defines a bijection
  $\{\text{$G$-orbits in $X$}\} \to \Ff(X)$ such that for two
  $G$-orbits $Y, Z \subseteq X$ we have $Y \subseteq \overline{Z}$ if
  and only if $(\Cm(X_Z), \Fm(X_Z))$ is a face of $(\Cm(X_Y),
  \Fm(X_Y))$.
\end{theorem}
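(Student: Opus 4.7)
The plan is to derive the orbit-level bijection from the Luna--Vust orbit-cone correspondence, localize to the simple case, and then compute the unique maximal colored cone of $\overline{X_0}$ in that setting.

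By Theorem~\ref{thm:correspondence}, a $G$-orbit $Y$ of $X$ is contained in $\overline{X_0}$ if and only if its colored cone $(\Cm', \Fm') = (\Cm(X_Y), \Fm(X_Y))$ in $\Ff(X)$ admits $(\Cm, \Fm)$ as a face; applying Theorem~\ref{thm:correspondence} to $\overline{X_0}$ transfers this statement to the claimed bijection between the colored cones of $\Ff(\overline{X_0})$ and such cones of $\Ff(X)$. To determine the colored cone attached to such a $Y$ in $\Ff(\overline{X_0})$, I replace $X$ by $X_Y$. Since $X_Y$ is simple with closed orbit $Y$ and contains $X_0$, the open subset $\overline{X_0} \cap X_Y$ of $\overline{X_0}$ has $Y$ as its unique closed orbit and therefore coincides with $(\overline{X_0})_Y$. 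This reduces the problem to showing, in the case where $X$ is simple with closed orbit $Y$ and $\overline{X_0}$ is correspondingly simple, that $(\Cm(\overline{X_0}), \Fm(\overline{X_0})) = (\pi(\Cm'), \Phi(\Fm'))$.

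For the color set, I would analyze which $D_\cl \in \Dm_\cl$ satisfy $Y \subseteq \overline{D_\cl}^{\overline{X_0}}$ type by type, using the decomposition $\Dm_\cl = \Dm_\cl^a \cup \Dm_\cl^{2a} \cup \Dm_\cl^{b1} \cup \Dm_\cl^{b2}$ from Section~\ref{section:ng} and the description of $\Dm_\cl^a$ from Theorem~\ref{theorem:1}. For $D_\cl \in \Dm_\cl^a$ with $D := \psi(D_\cl)$, the inclusion $\overline{D_\cl}^{\overline{X_0}} \subseteq \overline{D}^X$ from Theorem~\ref{theorem:1} gives one direction; for the converse I restrict to $\overline{X_0}$ a $B$-semi-invariant section of a $G$-linearized line bundle cutting out $D$ (Proposition~\ref{prop:domcoeff}) and use the $B$-orbit structure near $Y$ to place $Y$ inside $\overline{D_\cl}^{\overline{X_0}}$ rather than in another component of $\overline{D}^X \cap \overline{X_0}$. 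This produces the subset $\psi^{-1}(\Fm') \subseteq \Phi(\Fm')$. For $D_\cl \in \Dm_\cl \setminus \Dm_\cl^a$, the color $D_\cl$ is uniquely attached to a simple root (or orthogonal pair) $\alpha$ with $\alpha \notin S_\cl^p$; combining the identity $S_\cl^p = \{\alpha \in S : \Dm(\alpha) \subseteq \Fm\}$ from Theorem~\ref{theorem:1} with the assumption that $(\Cm, \Fm)$ is a face of $(\Cm', \Fm')$, the condition $Y \subseteq \overline{D_\cl}^{\overline{X_0}}$ translates into $\Dm(\alpha) \subseteq \Fm'$ for an $\alpha \in \varsigma_\cl(D_\cl)$, producing the remaining piece of $\Phi(\Fm')$.

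For the cone, I match generators. The color generators agree with $\pi(\rho(\Fm'))$ by $\rho_\cl = \pi \circ \rho \circ \psi$ on $\Dm_\cl^a$ and, on the remaining types, by the combinatorial formulas $\rho_\cl(D_\cl) \in \{\pi(\alpha^\vee|_\Mm), \tfrac{1}{2}\pi(\alpha^\vee|_\Mm)\}$ together with $\alpha^\vee|_\Mm \in \rho(\Dm(\alpha)) \subseteq \rho(\Fm')$ whenever $\Dm(\alpha) \subseteq \Fm'$. The $G$-invariant prime divisors of $\overline{X_0}$ are the closures of codimension-one $G$-orbits of $\overline{X_0}$, which by Theorem~\ref{thm:correspondence} are the orbits of $X$ whose colored cones in $\Ff(X)$ are faces of $(\Cm', \Fm')$ containing $(\Cm, \Fm)$ as a face of relative codimension one; projecting their valuations under $\pi$ yields the remaining ray generators of $\pi(\Cm')$ not produced by colors. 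I expect the main obstacle to be the converse direction in the type-$a$ color analysis, namely showing that $Y \subseteq \overline{D}^X$ forces $Y$ into the specific irreducible component $\overline{D_\cl}^{\overline{X_0}}$ of $\overline{D}^X \cap \overline{X_0}$, which requires combining the explicit $B$-semi-invariance provided by Proposition~\ref{prop:domcoeff} with the uniqueness of the closed $B$-orbit inside $Y$ in the simple case.
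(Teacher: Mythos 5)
There is a fundamental mismatch here: the statement you were asked to prove is Theorem~\ref{thm:correspondence} itself, i.e.\ the Luna--Vust classification of spherical embeddings by colored fans together with the orbit--cone correspondence, but your argument \emph{begins} by invoking that very theorem (``By Theorem~\ref{thm:correspondence}, a $G$-orbit $Y$ of $X$ is contained in $\overline{X_0}$ if and only if \dots''), and again later when identifying the $G$-invariant prime divisors of $\overline{X_0}$. That makes the argument circular as a proof of the stated result. What you have actually sketched is a proof strategy for Theorem~\ref{theorem:2} of the paper (the computation of the colored fan of $X_0 \hookrightarrow \overline{X_0}$ as the set of pairs $(\pi(\Cm'), \Phi(\Fm'))$), which \emph{presupposes} Theorem~\ref{thm:correspondence}; it is not a proof of the classification theorem itself.

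The paper does not prove Theorem~\ref{thm:correspondence}; it is quoted from Knop's work (\cite[Theorem~3.3]{knopsph}, building on Luna--Vust). An actual proof would require material entirely absent from your sketch: the bijection between strictly convex colored cones and simple embeddings (via the local structure theorem and the analysis of $B$-stable affine open subsets such as the $X'$ of Theorem~\ref{th:knb}), the gluing of simple pieces $X_Y$ along the compatibility condition that distinct cones meet $\Vm$ in disjoint relative interiors, and the valuation-theoretic characterization of when one orbit lies in the closure of another. None of the ingredients you list (Proposition~\ref{prop:domcoeff}, the map $\psi$, the type decomposition of $\Dm_\cl$) is relevant to establishing the classification; they all live downstream of it. If your intended target was Theorem~\ref{theorem:2}, your outline is broadly in the spirit of the paper's Section~\ref{section:p2}, but as a proof of the statement actually posed it has no content.
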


Finally, we will use the following result of Knop.

\begin{theorem}
\label{th:knb}
Let $G/H \hookrightarrow X$ be a spherical embedding and let $Y \subseteq X$
be a $G$-orbit with corresponding colored cone $(\Cm, \Fm)$. We set
$$
X' \coloneqq X_Y \setminus \bigcup_{D \in \Dm \setminus \Fm} \overline{D}\text{.}
$$
Then $X'$ has the following properties:
\begin{enumerate}[label=(\alph*)]
\item $X'$ is $B$-stable, affine, and open.
\item $Y$ is the only closed orbit in $G\cdot X'$.
\item $X' \cap Y$ is a $B$-orbit.
\item For every $B$-semi-invariant $f \in \C[X' \cap Y]$ there exists a $B$-semi-invariant
$f' \in \C[X']$ with $f'|_{X'\cap Y} = f$.
\end{enumerate}
\end{theorem}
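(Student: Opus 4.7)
The plan is to establish (a) by exhibiting $X'$ as the non-vanishing locus of a $B$-semi-invariant section of a $G$-linearized ample line bundle on $X_Y$, deduce (b) immediately, prove (d) by a direct argument on the affine $B$-variety $X'$, and finally derive (c) from (a) and (d).

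By Sumihiro's theorem the simple spherical variety $X_Y$ is quasi-projective, so it admits a $G$-linearized ample line bundle $\Lm$. Since $Y$ is the unique closed $G$-orbit of $X_Y$, every $G$-orbit closure, and in particular every $G$-stable prime divisor of $X_Y$, contains $Y$; it follows that the $B$-stable prime divisors of $X_Y$ not containing $Y$ are exactly $\{\overline{D} : D \in \Dm \setminus \Fm\}$. The crux of (a) is to produce a single $B$-semi-invariant section $s \in H^0(X_Y, \Lm^{\otimes n})^{(B)}$ (for some $n$) with zero locus exactly $\bigcup_{D \in \Dm \setminus \Fm} \overline{D}$. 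For each $D \in \Dm \setminus \Fm$ one constructs a $B$-semi-invariant section $s_D$ of some power of $\Lm$ vanishing on $\overline{D}$ but not on the open $B$-orbit $By_0 \subseteq Y$, and then takes a product. A $B$-semi-invariant section not vanishing on $By_0$ cannot vanish on any $B$-stable prime divisor containing $Y$, so $\{s_D = 0\} \subseteq \bigcup_{D' \in \Dm \setminus \Fm} \overline{D'}$; the product $s = \prod_D s_D^{m_D}$ for suitable $m_D > 0$ then has zero locus exactly $\bigcup_{D \in \Dm \setminus \Fm} \overline{D}$, so $X' = \{s \ne 0\}$, which is affine because $\Lm$ is ample. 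The existence of each $s_D$ rests on the fact that some positive multiple of $\overline{D}$ is Cartier (by the description of Cartier divisors on spherical varieties via the colored fan) together with the abundance of $B$-eigenvector sections in ample $G$-linearized line bundles non-vanishing on $By_0$ (via Lie--Kolchin applied to simple $G$-submodules of $H^0$). This establishes (a). Part (b) then follows at once: $G \cdot X'$ is $G$-stable open in $X_Y$ and must contain the unique closed orbit $Y$.

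For (d), write $X' = \Spec A$ and let $I \subseteq A$ be the $B$-stable ideal of $X' \cap Y$. The quotient map $A \twoheadrightarrow A/I = \C[X' \cap Y]$ is a $B$-equivariant surjection between locally finite rational $B$-modules, hence surjective on each $B$-isotypic component; any $B$-semi-invariant in $\C[X' \cap Y]$ of weight $\lambda$ thus lifts to a $B$-semi-invariant in $\C[X']$ of the same weight. For (c), the inclusion $By_0 \subseteq X' \cap Y$ holds because $s|_{By_0} \ne 0$. Conversely, suppose some $B$-orbit $O \ne By_0$ is contained in $X' \cap Y$. The ideal of $O$ in $\C[X' \cap Y]$ is a nonzero $B$-stable submodule, and Lie--Kolchin produces a $B$-eigenvector $\bar f$ in it; this $\bar f$ vanishes on $O$ but not on $By_0$ (else it would vanish on the dense set $\overline{By_0} = Y$, contradicting $\bar f \ne 0$). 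Lifting via (d) yields $f' \in \C[X']^{(B)}$ with $f'|_O = 0$ and $f'|_{By_0} \ne 0$. Normality of $X'$ together with Hartogs' theorem ensures $\{f' = 0\}$ cannot have codimension $\ge 2$, so it is a $B$-stable divisor in $X'$. Its $B$-stable prime components are restrictions to $X'$ of $B$-stable prime divisors of $X_Y$ meeting $X'$; by construction of $X'$ these are either $\overline{D}$ for $D \in \Fm$ or $G$-stable prime divisors, and both contain $Y$. Hence $Y \subseteq \{f' = 0\}$, contradicting $f'|_{By_0} \ne 0$.

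The principal obstacle is the construction of the sections $s_D$ in step (a): one must exhibit enough $B$-semi-invariant global sections of $G$-linearized line bundles on $X_Y$ to detect each color outside $\Fm$ while remaining nonzero on $By_0$. This depends on a careful use of quasi-projectivity, the Cartier divisor structure of spherical varieties, and the abundance of $B$-eigenvector sections in ample $G$-line bundles. Once (a) is in place, parts (b), (d), and (c) follow by the general affine $B$-variety arguments sketched above.
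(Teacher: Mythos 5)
The paper does not actually reprove this statement: its ``proof'' consists of citing Theorems~1.3 and~2.1 of Knop's survey, so you are reconstructing Knop's argument from scratch. Your reconstruction has a genuine gap at part~(d), and since your proof of~(c) leans on~(d), both parts are affected. You assert that the $B$-equivariant surjection $\C[X'] \twoheadrightarrow \C[X'\cap Y]$ is ``surjective on each $B$-isotypic component'', so that $B$-semi-invariants lift. This is false for rational $B$-modules: $B$ is not reductive, its rational representations are not semisimple, and a surjection of $B$-modules need not hit the $B$-eigenvectors of the target. Already for $\SL_2$ acting on the standard module $V$, the quotient $V \to V/\C e_1$ by the unique $B$-stable line is a surjection onto a one-dimensional $B$-module whose eigenvector has no $B$-eigenvector preimage. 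Statement~(d) is in fact the genuinely nontrivial point of the theorem, and the analogous lifting statement fails for general $B$-stable closed subvarieties of affine $B$-varieties; Knop's proof goes through the local structure theorem, writing $X' \cong R_u(P)\times Z$ with $Z$ an affine spherical variety for the reductive Levi $L$ of $P = \{g\in G : gX'=X'\}$, and then uses complete reducibility of $L$-modules together with the multiplicity-freeness of $\C[Z]$ to lift highest-weight vectors. Your argument never uses that $X$ is spherical, which is a sign that it cannot work as stated.

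There are also two weaker soft spots in~(a). First, the non-vanishing locus of a section of an ample line bundle on a merely quasi-projective variety need not be affine (the trivial bundle is ample on $\A^2\setminus\{0\}$, and the non-vanishing locus of the section $1$ is not affine); the standard repair is to pass to a projective $G$-equivariant compactification $\overline{X_Y}\subseteq\Pb(V)$ and choose $s$ to vanish on $\overline{X_Y}\setminus X_Y$ as well, so that $X'$ becomes the complement of a hyperplane section of a projective variety. Second, producing a $B$-semi-invariant $s_D$ vanishing on $\overline{D}$ but \emph{not} on $Y$ is not a direct application of Lie--Kolchin, since the eigenvector it produces inside the ($B$-stable but not $G$-stable) space of sections vanishing on $\overline{D}$ may well also vanish on $Y$; the clean route is the one the paper itself uses in Section~3: $\overline{D}$ is Cartier for $D\notin\Fm$ by Brion's result, and the canonical section of $\Om(\overline{D})$ (for a $G$-linearization, after replacing $G$ by a finite cover) is automatically a $B$-eigenvector because its divisor is $B$-stable, and it vanishes exactly on $\overline{D}$. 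These two points are repairable by standard devices; the gap in~(d) is not, short of importing the local structure theorem, which is the heart of Knop's proof.
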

\begin{proof}
The properties (a), (b), and (c) are given in \cite[Theorem~2.1]{knopsph}.
Moreover, property (d) is property c) of \cite[Theorem~1.3]{knopsph},
and it is implicitly shown in the proof of \cite[Theorem~2.1]{knopsph}
that $X'$ satisfies this property.
\end{proof}

For the remainder of the paper, let $G/H \hookrightarrow X$ be a
spherical embedding.  Let $X_\cl \subseteq X$ be a $G$-orbit, and
denote by $(\Cm, \Fm)$ the corresponding colored cone.

\section{Proof of Theorem~\ref{theorem:1}}
\label{section:p1}
Replacing $X$ with $X_{X_\cl}$, we may and will
assume that $X$ is simple with
closed orbit $X_0$. 
Let $(\Mm_\cl, \Sigma_\cl, S^p_\cl, \Dm^a_\cl)$ be the homogeneous
spherical datum of $X_\cl$.

\begin{prop}[see {\cite[Theorem~6.3]{knopsph}} or
    {\cite[Theorem~15.14]{ti}}]
  \label{prop:m}
  We have $\Mm_\cl = \Mm \cap \Cm^\perp$.
\end{prop}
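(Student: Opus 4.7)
The plan is to exploit the $B$-stable affine open subset from Theorem~\ref{th:knb}, applied to the simple embedding $X$ with unique closed orbit $X_\cl$, and translate the identity between $\Mm_\cl$ and $\Mm \cap \Cm^\perp$ into a statement about divisors of $B$-semi-invariant rational functions. Set $X' \coloneqq X \setminus \bigcup_{D \in \Dm \setminus \Fm} \overline{D}$, so that by Theorem~\ref{th:knb} the intersection $X' \cap X_\cl$ is the open $B$-orbit in $X_\cl$, and every $B$-semi-invariant on $X' \cap X_\cl$ lifts to a $B$-semi-invariant regular function on $X'$. Recall also that $\Cm$ is generated by $\rho(\Fm)$ together with the elements $\nu_Y \in \Vm$ for $Y$ ranging over the $G$-invariant prime divisors of $X$, all of which contain $X_\cl$ since $X$ is simple; likewise every $D \in \Fm$ has $\overline{D} \supseteq X_\cl$ by definition.

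For the inclusion $\Mm_\cl \subseteq \Mm \cap \Cm^\perp$, I would take a character $\chi \in \Mm_\cl$ and a $B$-semi-invariant $f \in \C(X_\cl)^{(B)}$ of weight $\chi$. Since $X' \cap X_\cl$ is a single $B$-orbit, $f$ is regular and nowhere vanishing there, so by property~(d) it extends to a $B$-semi-invariant $f' \in \C[X']$ of weight $\chi$. Viewing $f'$ as an element of $\C(G/H)^{(B)}$ gives $\chi \in \Mm$. Writing $\Div f' = \sum_D n_D \overline{D} + \sum_Y n_Y Y$ on $X$, with $n_D = \langle \rho(D), \chi \rangle$ and $n_Y = \langle \nu_Y, \chi \rangle$, regularity on $X'$ forces $n_D \geq 0$ for $D \in \Fm$ and $n_Y \geq 0$ for each $G$-invariant $Y$. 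Since $\overline{D}$ (for $D \in \Fm$) and $Y$ both contain $X_\cl$, any positive $n_D$ or $n_Y$ would force $f'$ to vanish identically on $X_\cl \cap X'$, contradicting the nonvanishing of $f$. Hence all these coefficients are zero, so $\chi$ pairs to $0$ with every generator of $\Cm$, giving $\chi \in \Cm^\perp$.

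For the reverse inclusion, I would start with $\chi \in \Mm \cap \Cm^\perp$ and choose $f_\chi \in \C(G/H)^{(B)}$ of weight $\chi$. The orthogonality hypothesis says $\langle \rho(D), \chi \rangle = 0$ for $D \in \Fm$ and $\langle \nu_Y, \chi \rangle = 0$ for every $G$-invariant prime divisor $Y \subseteq X$, so $\Div f_\chi$ on $X$ is supported entirely in $X \setminus X' = \bigcup_{D \notin \Fm} \overline{D}$. Therefore $f_\chi$ is a unit in $\C[X']$, and its restriction $f_\chi|_{X_\cl \cap X'}$ is a regular, nowhere vanishing $B$-semi-invariant of weight $\chi$ on the open $B$-orbit of $X_\cl$, so $\chi \in \Mm_\cl$.

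The only delicate point is the nonvanishing argument used in both directions: one must know that every element of $\Fm$ and every $G$-invariant prime divisor of $X$ contains $X_\cl$, which is exactly where the reduction to the simple embedding $X = X_{X_\cl}$ and the definition of $(\Cm, \Fm) = (\Cm(X), \Fm(X))$ are used. Everything else is a routine translation between valuations of $B$-semi-invariants and the combinatorics of the colored cone.
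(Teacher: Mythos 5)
Your argument is correct. Note, however, that the paper does not actually prove Proposition~\ref{prop:m}: it is quoted as a known result, with references to Knop's Theorem~6.3 and Timashev's Theorem~15.14, so there is no in-paper proof to compare against. What you have written is a self-contained derivation of that cited result from Theorem~\ref{th:knb} alone, and it is essentially the standard argument from the literature. Both inclusions are sound: for $\Mm_\cl \subseteq \Mm \cap \Cm^\perp$ you correctly use that the extension $f'$ supplied by Theorem~\ref{th:knb}(d) is regular on $X'$, that the generic points of every $\overline{D}$ with $D \in \Fm$ and of every $G$-invariant prime divisor $Y$ lie in $X'$, and that all of these divisors contain $X_\cl$ (the latter because $X$ is simple), so that a strictly positive coefficient would force $f'$ to vanish on $X_\cl \cap X'$; for the reverse inclusion, normality of $X'$ guarantees that a rational function with trivial divisor on $X'$ is an invertible regular function there, and its restriction to the open $B$-orbit $X' \cap X_\cl$ is a nonzero $B$-eigenfunction of weight $\chi$. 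You also correctly identify the two places where simplicity of the embedding and the definition of $(\Cm(X),\Fm(X))$ enter. The only stylistic remark is that the paper deliberately black-boxes this statement, whereas you prove it; your proof is consistent with the toolkit the paper does set up (in particular Theorem~\ref{th:knb}, which the paper uses in exactly this way in the proofs of Propositions~\ref{prop:sp} and~\ref{prop:apv}).
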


\begin{prop}
  \label{prop:sp}
  We have $S^p_\cl = \{\alpha \in S : \Dm(\alpha) \subseteq \Fm\}$.
\end{prop}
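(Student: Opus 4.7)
The plan is to compute the $B$-weight of a single $B$-stable effective divisor on $X$ whose restriction to $X_\cl$ cuts out the complement of the open $B$-orbit of $X_\cl$, and to pair this weight with $\alpha^\vee$ in two different ways.

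First I would combine Theorem~\ref{th:knb}(a)--(c) to identify the open $B$-orbit of $X_\cl$ with $U_\cl := X' \cap X_\cl$. Its complement in $X_\cl$ is then
\[
X_\cl \setminus U_\cl \;=\; \bigcup_{D \in \Dm \setminus \Fm} \bigl(X_\cl \cap \overline{D}\bigr)\text{,}
\]
so the effective $B$-stable Weil divisor $\Delta := \sum_{D \in \Dm \setminus \Fm} \overline{D}$ on $X$ meets $X_\cl$ properly, in codimension one, exactly along the colors of $X_\cl$. Writing the restricted divisor as $\Delta|_{X_\cl} = \sum_{D_\cl \in \Dm_\cl} m_{D_\cl}\, D_\cl$, every multiplicity $m_{D_\cl}$ is strictly positive since the whole set $X_\cl \setminus U_\cl$ is covered.

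Next I would extract the $B$-weight $\lambda$ of the $G$-linearized divisorial sheaf associated to $\Delta$ in two different ways. Using Proposition~\ref{prop:domcoeff} on $G/H$, each color $D$ contributes weight $\lambda_D$, so $\lambda = \sum_{D \in \Dm \setminus \Fm} \lambda_D$; using Proposition~\ref{prop:domcoeff} on $X_\cl$ and the fact that this $B$-weight is preserved under restriction to the smooth closed $G$-stable subvariety $X_\cl \not\subseteq \Supp\Delta$, the same $\lambda$ equals $\sum_{D_\cl \in \Dm_\cl} m_{D_\cl}\,\lambda_{D_\cl}$. I would then pair both expressions with $\alpha^\vee$ for an arbitrary $\alpha \in S$: by Proposition~\ref{prop:domcoeff} every summand on each side is a nonnegative integer, vanishing precisely when the corresponding color is not in $\Dm(\alpha)$ resp.\ not in $\Dm_\cl(\alpha)$. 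Since the $m_{D_\cl}$ are strictly positive, this yields
\[
\Dm(\alpha) \subseteq \Fm \;\Longleftrightarrow\; \langle \alpha^\vee, \lambda\rangle = 0 \;\Longleftrightarrow\; \Dm_\cl(\alpha) = \emptyset \;\Longleftrightarrow\; \alpha \in S^p_\cl\text{,}
\]
as claimed.

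The main obstacle I expect is the weight compatibility used in the middle step: one has to verify that the $B$-weight of the $G$-linearized sheaf of $\Delta$ genuinely restricts to the $B$-weight predicted by Proposition~\ref{prop:domcoeff} applied to $\Delta|_{X_\cl}$. This can be handled either by passing to a $G$-equivariant resolution of singularities of $X$ where $\Delta$ becomes Cartier, or intrinsically in the $B$-equivariant divisor class group, using that $G$-invariant prime divisors of $X$ carry trivial $B$-weight and thus cannot introduce any correction term to $\lambda$.
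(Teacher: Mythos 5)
Your argument is correct, but it takes a genuinely different route from the paper. The paper first proves the inclusion $\{\alpha \in S : \Dm(\alpha) \subseteq \Fm\} \subseteq S^p_\cl$ by observing that $P_{S'}$ stabilizes $X' = X \setminus \bigcup_{D \notin \Fm}\overline{D}$ and hence the open $B$-orbit $X' \cap X_\cl$, and then upgrades this to an equality by a dimension count: Knop's formula $\dim X_\cl = \rk \Mm - \dim \Cm + \dim G/P_{S'}$ applied to both embeddings, combined with $\Mm_\cl = \Mm \cap \Cm^\perp$ (Proposition~\ref{prop:m}), forces $\dim P_{S'} = \dim P_{S^p_\cl}$. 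Your proof instead restricts the boundary divisor $\Delta = \sum_{D \in \Dm\setminus\Fm}\overline{D}$ to $X_\cl$ and compares $\langle\alpha^\vee,\lambda\rangle$ computed on $G/H$ and on $X_\cl$ via Proposition~\ref{prop:domcoeff}; since every color of $X_\cl$ occurs in $\Delta|_{X_\cl}$ with positive multiplicity (the complement of the affine open $B$-orbit in the normal variety $X_\cl$ is pure of codimension one), both inclusions drop out of the single equivalence $\langle\alpha^\vee,\lambda\rangle = 0$. This is self-contained in the sense that it needs neither Proposition~\ref{prop:m} nor the dimension formula, and it is exactly the technique the paper deploys later for Proposition~\ref{prop:at-most-one} and in Section~\ref{section:ico}; the price is that you need $\Delta$ to be Cartier and the weight to behave under restriction. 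On that last point, your proposed fixes (equivariant resolution, or an ad hoc argument in the divisor class group) are heavier than necessary: after the reduction to a simple embedding, $\overline{D}$ is already Cartier for every $D \in \Dm\setminus\Fm$ by Brion's result \cite[2.2, Proposition]{l-brion-pic}, which the paper invokes for Proposition~\ref{prop:at-most-one}, and the weight compatibility is handled there by passing to $G = G^{ss}\times C$ so that the $G$-linearized line bundle with a $B$-semi-invariant, $C$-invariant section cutting out a given $B$-stable effective Cartier divisor is unique; restricting that section to $X_\cl \not\subseteq \Supp\Delta$ then gives $\lambda = \sum_{D_\cl} m_{D_\cl}\lambda_{D_\cl}$ directly. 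With those two references supplied, your proof is complete.
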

\begin{proof}
  Let $S' \coloneqq \{\alpha \in S : \Dm(\alpha) \subseteq \Fm\} = S
  \setminus \bigcup_{D\notin \Fm} \varsigma(D)$.  To show that
  $S'\subseteq S_\cl^p$, we reproduce an argument from the proof of
  \cite[Lemma~6.5]{knopsph}.  It is clear that $P_{S'}$ stabilizes
  $X'\coloneqq X\setminus\bigcup_{D\not\in\Fm}\overline{D}$ where the
  closures are taken in $X$. It follows from Theorem~\ref{th:knb}(c)
  that $X'\cap X_\cl$ is the open $B$-orbit in $X_\cl$.
  In particular, $P_{S'}$ is contained in the stabilizer of the open
  $B$-orbit in $X_\cl$. Hence we have $S'\subseteq S_\cl^p$.

  By \cite[Theorem~6.6]{knopsph}, we have $\dim X_\cl = \rk \Mm - \dim \Cm + \dim
  G/P_{S'}$. On the other hand, we may also consider
  the trivial embedding $X_\cl \hookrightarrow X_\cl$, where
  the $G$-orbit $X_0$ corresponds to the trivial colored cone $(0, \emptyset)$.
  In this case, \cite[Theorem~6.6]{knopsph}
  yields $\dim X_\cl = \rk \Mm_0 - 0 + \dim G/P_{S_\cl^p}$.

  Using Proposition \ref{prop:m}, we obtain $\dim
  P_{S'}=\dim P_{S_\cl^p}$, therefore $S'=S_\cl^p$.
\end{proof}

We now turn our attention to the valuation cone. We denote by $\pi
\colon \Nm \to \Nm_\cl$ the map dual to the inclusion $\Mm_\cl
\hookrightarrow \Mm$. The induced map of vector spaces is also denoted
by $\pi\colon \Nm_\Q \to \Nm_{\cl,\Q}$ when there is no danger of confusion.
We denote by $\Vm_\cl \subseteq \Nm_{\cl,\Q}$ the valuation cone of
$X_\cl$. The following result follows from a special case proved by
Brion and Pauer.

\begin{prop}
  \label{prop:v}
  We have $\Vm_\cl = \pi(\Vm)$.
\end{prop}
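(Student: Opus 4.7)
The plan is to deduce $\Vm_\cl = \pi(\Vm)$ from the special case established by Brion and Pauer, using the local structure provided by Theorem~\ref{th:knb}.

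For the inclusion $\pi(\Vm) \subseteq \Vm_\cl$, I would realize any $\nu \in \Vm$ geometrically via the Luna-Vust correspondence (Theorem~\ref{thm:correspondence}): after possibly passing to a finer spherical embedding dominating $X$, the valuation $\nu$ arises from a $G$-invariant prime divisor $E$ whose closure contains $X_\cl$ as a $G$-orbit. Then $X_\cl \subseteq \overline{E}$ defines a $G$-invariant discrete valuation $\nu_\cl$ of $\C(X_\cl)$. Pairing $\iota(\nu_\cl)$ against $B$-semi-invariants of weight in $\Mm_\cl$ and using the identification $\Mm_\cl = \Mm \cap \Cm^\perp$ of Proposition~\ref{prop:m}, together with the extension property of Theorem~\ref{th:knb}(d), identifies $\iota(\nu_\cl)$ with $\pi(\iota(\nu))$.

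For the reverse inclusion $\Vm_\cl \subseteq \pi(\Vm)$, I would appeal directly to Brion-Pauer. Given $\nu_\cl \in \Vm_\cl$ with $v_\cl = \iota(\nu_\cl)$, the strategy is to define a candidate $G$-invariant valuation $\tilde\nu$ of $\C(G/H) = \C(X)$ as follows: on a $B$-semi-invariant $f \in \C[X']$, where $X' \subseteq X$ is the affine open of Theorem~\ref{th:knb}, set $\tilde\nu(f) \coloneqq \nu_\cl(f|_{X' \cap X_\cl})$. The surjectivity half of Theorem~\ref{th:knb}(d) ensures that every $B$-semi-invariant on the $B$-orbit $X' \cap X_\cl$ is hit, so the resulting functional detects $v_\cl$ after projection by $\pi$; one then shows that it extends to a genuine $G$-invariant discrete valuation of $\C(G/H)$, which is precisely Brion-Pauer's construction.

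The main obstacle is the reverse inclusion, and more specifically verifying that the functional defined via the extension/restriction of $B$-semi-invariants satisfies the axioms of a discrete valuation on the entire function field $\C(G/H)$, rather than only on $B$-eigenspaces. This is the substantive content of Brion and Pauer's original argument, and once it is invoked the equality $\Vm_\cl = \pi(\Vm)$ follows by combining both inclusions.
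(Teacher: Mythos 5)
Your outline has a genuine gap in both directions, and the missing ingredient is the same in each: a reduction to the \emph{elementary} case, which is the only case Brion--Pauer actually treat. For the inclusion $\pi(\Vm)\subseteq\Vm_\cl$, you cannot in general realize $\nu\in\Vm$ by a $G$-invariant prime divisor $E$ with $X_\cl\subseteq\overline{E}$: by Theorem~\ref{thm:correspondence} that containment forces the colored cone $(\cone(\iota(\nu)),\emptyset)$ of the open orbit of $E$ to be a face of $(\Cm,\Fm)$, so $\iota(\nu)$ would have to span an extremal ray of $\Cm$ avoiding $\rho(\Fm)$. Passing to a finer embedding does not repair this, because subdividing $\Cm$ destroys $X_\cl$ as an orbit; at best you obtain a divisor whose closure contains some orbit \emph{dominating} $X_\cl$, and relating the valuation cones across that dominant morphism is precisely the step you have not supplied. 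For $\Vm_\cl\subseteq\pi(\Vm)$ you invoke \enquote{Brion--Pauer's construction}, but \cite[Th\'eor\`eme~3.6, ii)]{bpval} concerns only elementary embeddings (two orbits, the closed one a divisor); verifying that your functional on $B$-semi-invariants extends to a $G$-invariant discrete valuation of all of $\C(G/H)$ for an arbitrary colored cone is exactly the hard content, and it is not in that reference. (Your formula $\tilde\nu(f)\coloneqq\nu_\cl(f|_{X'\cap X_\cl})$ is moreover undefined when $f$ vanishes on $X_\cl$.)

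The paper's proof supplies the reduction you are missing: choose $u\in\Cm^\circ\cap\Vm$ and let $G/H\hookrightarrow\Xt$ be the elementary embedding with colored cone $(\cone(u),\emptyset)$, so that Brion--Pauer gives $\pit(\Vm)=\Vmt_\cl$ on the nose. By \cite[Theorem~4.1]{knopsph} the identity on $G/H$ extends to a $G$-morphism $\varphi\colon\Xt\to X$ mapping the divisorial orbit $\Xt_\cl$ \emph{onto} $X_\cl$, and for this dominant morphism of spherical homogeneous spaces Knop's results yield $\varphi_*(\Vmt_\cl)=\Vm_\cl$. Since $\pi=\varphi_*\circ\pit$, both inclusions follow simultaneously. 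This two-step factorization --- first degenerate along a single interior ray, then map down --- is what you would need to add to make your plan work.
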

\begin{proof}
  Let $u \in \Cm^\circ \cap \Vm$ and set $\Cmt \coloneqq \cone(u)$.
  Then $( \Cmt, \emptyset )$ is a colored cone corresponding to a
  spherical embedding $G / H \hookrightarrow \Xt$, where $\Xt$
  consists of exactly two $G$-orbits, one of them being $G / H$ and
  the second one, which we denote by $\Xt_\cl$, being of codimension
  $1$ (such spherical embeddings are called
  \emph{elementary}).  Let $\Mmt_\cl$ be the weight lattice of
  $\Xt_\cl$, and let $\pit \colon \Nm_\Q \to \Nmt_{\cl,\Q}$ be the map
  dual to the inclusion $\Mmt_\cl \hookrightarrow \Mm$. We denote the
  valuation cone of $\Xt_\cl$ by $\Vmt_\cl$.  Then
  \cite[Th\'{e}or\`{e}me 3.6, ii)]{bpval} states $\pit( \Vm ) =
  \Vmt_\cl$.

  According to \cite[Theorem 4.1]{knopsph}, the identity morphism on
  $G/H$ extends to a $G$-equivariant morphism $\varphi \colon \Xt \to
  X$, which maps the $G$-orbit $\Xt_\cl$ onto $X_\cl$.
  The map $\varphi|_{\Xt_\cl} \colon \Xt_\cl \to X_\cl$
  induces the natural inclusion $\Mm_\cl \hookrightarrow \Mmt_\cl$.
  Let
  $$
    \varphi_{*} \colon \Nmt_{\cl,\Q} \to \Nm_{\cl,\Q}
  $$
  be the corresponding dual map. It follows from the beginning of
  \cite[Section 4]{knopsph} that $\varphi_{*}( \Vmt_\cl ) =
  \Vm_\cl$. Together with $\pi = \varphi_{*} \circ \pit$, we obtain
  $\pi( \Vm ) = \varphi_{*}(\pit( \Vm )) = \Vm_\cl$.
\end{proof}

\begin{cor}\label{cor:sph_roots}
  We have $\cone(\Sigma_\cl) = \cone(\Sigma) \cap \Mm_{\cl,\Q}$.
\end{cor}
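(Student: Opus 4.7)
The plan is to deduce this corollary from Proposition~\ref{prop:v} by a short duality computation, once we reinterpret $\cone(\Sigma)$ and $\cone(\Sigma_\cl)$ as the negatives of dual cones to the respective valuation cones.

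First I would recall the definition of the valuation cone, which says that $\Vm = \{v \in \Nm_\Q : \langle v, \gamma\rangle \le 0 \text{ for all } \gamma \in \Sigma\} = (\cone(-\Sigma))^\vee$ in $\Nm_\Q$. Since $\Sigma$ is finite, $\cone(-\Sigma)$ is a closed convex polyhedral cone in $\Mm_\Q$, so by biduality $\Vm^\vee = \cone(-\Sigma)$, and hence $\cone(\Sigma) = -\Vm^\vee$. Applying the same observation in $\Mm_{\cl,\Q}$ gives $\cone(\Sigma_\cl) = -\Vm_\cl^\vee$.

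Next I would compute $\pi(\Vm)^\vee$ inside $\Mm_{\cl,\Q}$. By Proposition~\ref{prop:m} we may regard $\Mm_{\cl,\Q} \subseteq \Mm_\Q$, and by construction $\pi \colon \Nm_\Q \to \Nm_{\cl,\Q}$ is the dual of this inclusion, so that $\langle \pi(v), \chi\rangle = \langle v, \chi\rangle$ for $v \in \Nm_\Q$ and $\chi \in \Mm_{\cl,\Q}$. Consequently, a $\chi \in \Mm_{\cl,\Q}$ lies in $\pi(\Vm)^\vee$ if and only if $\langle v, \chi\rangle \ge 0$ for every $v \in \Vm$, which is the condition $\chi \in \Vm^\vee$ (viewed inside $\Mm_\Q$). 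Thus $\pi(\Vm)^\vee = \Vm^\vee \cap \Mm_{\cl,\Q}$.

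Combining these two steps with Proposition~\ref{prop:v}, which gives $\Vm_\cl = \pi(\Vm)$, yields
\[
  \cone(\Sigma_\cl) = -\Vm_\cl^\vee = -\pi(\Vm)^\vee = -\bigl(\Vm^\vee \cap \Mm_{\cl,\Q}\bigr) = \cone(\Sigma) \cap \Mm_{\cl,\Q},
\]
which is the claim. I do not anticipate a real obstacle here; the only points demanding care are the sign conventions (remembering that $\Vm$ is dual to $\cone(-\Sigma)$, not to $\cone(\Sigma)$) and the fact that one needs $\cone(\Sigma)$ to be closed in order to invoke biduality, which is immediate because $\Sigma$ is finite (and even linearly independent).
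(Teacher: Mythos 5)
Your proof is correct and is essentially identical to the paper's, which also deduces the claim from Proposition~\ref{prop:v} via the chain $\cone(\Sigma_\cl) = -\Vm_\cl^\vee = -\pi(\Vm)^\vee = (-\Vm^\vee) \cap \Mm_{\cl,\Q} = \cone(\Sigma) \cap \Mm_{\cl,\Q}$. You have merely spelled out the biduality and the identity $\pi(\Vm)^\vee = \Vm^\vee \cap \Mm_{\cl,\Q}$ that the paper leaves implicit.
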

\begin{proof}
  We have
  $$
  \cone(\Sigma_\cl) = -\Vm_\cl^\vee = -\pi(\Vm)^\vee =
  (-\Vm^\vee) \cap \Mm_{\cl,\Q} = \cone( \Sigma ) \cap \Mm_{\cl,\Q}\text{.}
  $$
\end{proof}

\begin{lemma}
  \label{lemma:si}
  Let $\gamma \in \Sigma$. If $\langle \Cm, \gamma \rangle \subseteq
  \Q_{\le0}$, then either $\gamma \in \Sigma_\cl$ or $\Sigma_\cl
  \subseteq \cone(\Sigma \setminus \{\gamma\})$.
\end{lemma}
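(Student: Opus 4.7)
The plan is to split based on whether the functional $\gamma$ vanishes identically on $\Cm$ or is strictly negative at some relative-interior point of $\Cm$. A short convex-analysis observation shows these two options exhaust the hypothesis $\langle \Cm,\gamma\rangle \subseteq \Q_{\le 0}$: if $\gamma \le 0$ on $\Cm$ and $\langle v_0,\gamma\rangle = 0$ for some $v_0 \in \Cm^\circ$, then for any $v$ in the linear span of $\Cm$ both $v_0 \pm \epsilon v$ lie in $\Cm$ for $\epsilon > 0$ sufficiently small, and the nonpositivity forces $\langle v,\gamma\rangle = 0$, so $\gamma \in \Cm^\perp$. Thus either $\gamma \in \Cm^\perp$, or $\langle v,\gamma\rangle < 0$ for every $v \in \Cm^\circ$. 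This opening dichotomy is the only mildly delicate step; the rest is bookkeeping with the linear independence of spherical roots.

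In the first case I aim for $\gamma \in \Sigma_\cl$. By Proposition~\ref{prop:m} and Corollary~\ref{cor:sph_roots}, $\gamma$ lies in $\cone(\Sigma) \cap \Mm_{\cl,\Q} = \cone(\Sigma_\cl)$. Write $\gamma = \sum_{\sigma \in \Sigma_\cl} d_\sigma \sigma$ with $d_\sigma \ge 0$, and expand each $\sigma \in \Sigma_\cl$ as $\sigma = \sum_{\tau \in \Sigma} c_{\sigma,\tau}\tau$ with $c_{\sigma,\tau} \ge 0$, again by Corollary~\ref{cor:sph_roots}. Substituting and using the linear independence of $\Sigma$ gives $\sum_\sigma d_\sigma c_{\sigma,\tau} = 0$ for every $\tau \ne \gamma$; since the summands are nonnegative, every $\sigma$ with $d_\sigma > 0$ must be a positive rational multiple of $\gamma$. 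Since $\gamma$ is primitive in $\Mm$ (hence in the sublattice $\Mm_\cl$) and $\sigma$ is primitive in $\Mm_\cl$, the multiple is $1$, so $\sigma = \gamma$ and therefore $\gamma \in \Sigma_\cl$.

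In the second case I aim for $\Sigma_\cl \subseteq \cone(\Sigma \setminus \{\gamma\})$. Pick $v \in \Cm^\circ \cap \Vm$, which exists because $(\Cm,\Fm)$ is a colored cone. Then $\langle v,\tau\rangle \le 0$ for every $\tau \in \Sigma$ by the description of the valuation cone, while $\langle v,\gamma\rangle < 0$ by the case assumption. For any $\sigma \in \Sigma_\cl \subseteq \Cm^\perp$ we have $\langle v,\sigma\rangle = 0$; writing $\sigma = \sum_\tau c_{\sigma,\tau}\tau$ with $c_{\sigma,\tau}\ge 0$, the identity $0 = \sum_\tau c_{\sigma,\tau}\langle v,\tau\rangle$ expresses $0$ as a sum of nonpositive terms, so each must vanish. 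In particular $c_{\sigma,\gamma} = 0$, giving the required containment.
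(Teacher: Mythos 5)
Your proof is correct and follows essentially the same route as the paper's: the same opening dichotomy (either $\gamma$ vanishes on all of $\Cm$, or $\langle v,\gamma\rangle<0$ for every $v\in\Cm^\circ$), the same use of Proposition~\ref{prop:m} and Corollary~\ref{cor:sph_roots} to get $\gamma\in\cone(\Sigma_\cl)$ in the first case, and the same choice of $v_0\in\Cm^\circ\cap\Vm$ in the second. The only difference is that you spell out explicitly the extremal-ray and nonnegative-coefficient bookkeeping that the paper leaves implicit.
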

\begin{proof}
  As $\langle \Cm, \gamma \rangle \subseteq \Q_{\le 0}$, the cone
  $\Cm' \coloneqq \Cm \cap \gamma^\perp$ is a face of $\Cm$. If $\Cm'
  = \Cm$, then we have $\Cm \subseteq \gamma^\perp$ and therefore
  $\gamma \in \Mm_\cl$ by Proposition~\ref{prop:m}. By
  Corollary~\ref{cor:sph_roots}, it follows that $\gamma \in
  \cone(\Sigma_\cl)$, hence $\gamma \in \Sigma_\cl$ as $\Q_{\ge
    0}\gamma$ is an extremal ray in the simplicial cone
  $\cone(\Sigma)$.  Otherwise we have $\langle v, \gamma \rangle < 0$
  for every $v \in \Cm^\circ$.  In that case, as $\Cm^\circ \cap \Vm
  \ne \emptyset$, there exists $v_0 \in \Cm^\circ \cap \Vm$ with
  $\langle v_0, \gamma \rangle < 0$. Since $\langle v_0, \Sigma
  \rangle \subseteq \Q_{\le0}$ and $\Sigma_\cl \subseteq \Cm^\perp$,
  we obtain $\Sigma_\cl \subseteq \cone(\Sigma \setminus \{\gamma\})$.
\end{proof}

\begin{prop}
  If $\dim (\Cm \cap \Vm) = \dim \Cm$, then $\Sigma_\cl = \Sigma \cap
  \Mm_\cl$.
\end{prop}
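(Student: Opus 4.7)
My plan is to strengthen Corollary~\ref{cor:sph_roots} by showing that under the additional hypothesis every element of $\cone(\Sigma)\cap\Mm_{\cl,\Q}$ is already a nonnegative combination of those spherical roots in $\Sigma$ that lie in $\Mm_\cl$, so that $\cone(\Sigma_\cl)=\cone(\Sigma\cap\Mm_\cl)$. Both cones are simplicial (the right-hand one because $\Sigma$ is linearly independent), and a primitive element of $\Mm$ lying in the sublattice $\Mm_\cl$ is automatically primitive in $\Mm_\cl$; comparing primitive extremal generators then gives $\Sigma_\cl=\Sigma\cap\Mm_\cl$.

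The crux is choosing a special valuation $v_0$. The assumption $\dim(\Cm\cap\Vm)=\dim\Cm$ implies that $\Cm\cap\Vm$ and $\Cm$ share an affine hull, which coincides with $\lspan(\Cm)$ since both are cones through the origin; hence any point in the relative interior of $\Cm\cap\Vm$ automatically lies in $\Cm^\circ$. I would pick such a $v_0$ and claim that for every $\gamma\in\Sigma$,
\[
  \langle v_0,\gamma\rangle=0 \;\Longleftrightarrow\; \gamma\in\Mm_\cl.
\]
The direction ``$\Leftarrow$'' is immediate from $v_0\in\Cm$. For ``$\Rightarrow$'', note that $\langle\cdot,\gamma\rangle\le 0$ on $\Vm\supseteq\Cm\cap\Vm$, and this nonpositive linear functional attains its maximum value $0$ at the relative interior point $v_0$; the standard convex-analysis argument then forces it to vanish on all of $\Cm\cap\Vm$, hence on $\lspan(\Cm\cap\Vm)=\lspan(\Cm)$, so $\gamma\in\Cm^\perp\cap\Mm=\Mm_\cl$ by Proposition~\ref{prop:m}.

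With $v_0$ in hand, given $\mu=\sum_i a_i\gamma_i\in\cone(\Sigma)\cap\Mm_{\cl,\Q}$ with $a_i\ge 0$ and $\gamma_i\in\Sigma$, the identity $0=\langle v_0,\mu\rangle=\sum_i a_i\langle v_0,\gamma_i\rangle$ expresses $0$ as a sum of nonpositive summands, so each summand vanishes. Whenever $a_i>0$ the above equivalence forces $\gamma_i\in\Mm_\cl$, yielding $\mu\in\cone(\Sigma\cap\Mm_\cl)$ and finishing the argument.

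The main obstacle I anticipate is the convex-geometric step of producing $v_0$ with the stated property, which couples the dimension hypothesis with the maximum principle for linear functionals on convex sets; once that is established, the rest is a short linear-algebra calculation combined with the primitivity remark.
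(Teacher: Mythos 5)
Your proof is correct and is essentially the paper's argument: the paper likewise reduces to a generator $v_0 \in \Cm^\circ \cap \Vm$ (after replacing $(\Cm,\Fm)$ by $(\Cm\cap\Vm,\emptyset)$, justified by $(\Cm\cap\Vm)^\perp=\Cm^\perp$) and runs the same sign argument --- a nonnegative combination of the $\gamma\in\Sigma$ pairing to $0$ against $v_0$ forces the coefficients of all $\gamma$ with $\langle v_0,\gamma\rangle<0$ to vanish --- packaged there as Lemma~\ref{lemma:si}, before comparing the simplicial cones $\cone(\Sigma_\cl)$ and $\cone(\Sigma\cap\Sigma_\cl)$. The only real difference is presentational: you inline the dichotomy $\langle v_0,\gamma\rangle=0\Leftrightarrow\gamma\in\Mm_\cl$ for a relative interior point of $\Cm\cap\Vm$ instead of citing that lemma, and you finish by matching primitive extremal generators rather than by a cardinality count.
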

\begin{proof}
  It is clear that $\Sigma\cap\Mm_\cl\subseteq\Sigma_\cl$.  It follows
  from $\dim(\Cm\cap\Vm)=\dim(\Cm)$ that
  $\rleft(\Cm\cap\Vm\rright)^\perp=\Cm^\perp$. Therefore, $\cone(
  \Sigma_\cl ) = \cone( \Sigma ) \cap \rleft( \Cm \cap \Vm \rright
  )^\perp$ by Proposition~\ref{prop:m} and Corollary~\ref{cor:sph_roots}.
  In order to determine $\Sigma_0$,
  we may hence assume $(\Cm, \Fm) = (\Cm \cap \Vm, \emptyset)$.
  Since $\langle \Vm, \Sigma \rangle \subseteq \Q_{\le0}$,
  it follows from Lemma~\ref{lemma:si} and the linear independence of
  $\Sigma$ that
  $$
    \Sigma_\cl \subseteq \bigcap_{\gamma \in
      \Sigma\setminus\Sigma_\cl} \cone(\Sigma \setminus \{\gamma\}) =
    \cone(\Sigma \cap \Sigma_\cl)\text{,}
  $$
  hence $\Sigma_\cl \subseteq \Sigma \cap \Sigma_\cl$ as $\Sigma_0$ is
  linearly independent.
\end{proof}

We now turn our attention to the set $\Dm^a_0$.
By \cite[2.2, Proposition]{l-brion-pic},
the prime divisor $\overline{D}$ in $X$ is a
Cartier divisor for every $D \in \Dm \setminus \Fm$.
This fact will be used in the proofs of
the following results.

\begin{prop}
  \label{prop:at-most-one}
  Let $\alpha \in \Sigma_\cl \cap S$. Then the following two statements hold:
  \begin{enumerate}[label=(\alph*)]
  \item A color $D \in \Dm \setminus \Fm$ cannot contain both colors
    in $\Dm_\cl( \alpha )$ simultaneously in its closure.
  \item If $D \in \Dm \setminus \Fm$ contains a color in $\Dm_\cl(
    \alpha )$ in its closure, then $D \in \Dm(\alpha)$.
  \end{enumerate}
\end{prop}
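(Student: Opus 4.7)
The plan is to exploit the fact that $\overline{D}$ is Cartier (by \cite{l-brion-pic}), producing a $G$-linearized line bundle $L$ on $X$ and a $B$-semi-invariant section $s \in H^0(X, L)$ of weight $\lambda$ with $\Div s = \overline{D}$. Proposition~\ref{prop:domcoeff} then pins down $\langle \alpha^\vee, \lambda \rangle$ as one of $0$, $1$, or $2$, according to whether $D \in \Dm(\alpha)$ and to the type of $D$.

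Since $D \notin \Fm$, we have $X_\cl \not\subseteq \overline{D}$, so the restriction $s|_{X_\cl}$ will be a nonzero $B$-semi-invariant section of $L|_{X_\cl}$, still of weight $\lambda$. Its divisor is an effective $B$-invariant Weil divisor $\sum_{D_\cl \in \Dm_\cl} m_{D_\cl} D_\cl$, with $m_{D_\cl} > 0$ precisely when $D_\cl \subseteq \overline{D}$. I would apply Proposition~\ref{prop:domcoeff} to $X_\cl$ in its additive form, which I would justify by tensoring reference sections realizing individual colors as divisors and using that two $G$-linearizations of one line bundle differ by a character of $G$, hence pair trivially with every coroot. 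This yields
\begin{equation*}
\langle \alpha^\vee, \lambda \rangle = \sum_{D_\cl \in \Dm_\cl(\alpha)} m_{D_\cl} \cdot c(D_\cl, \alpha),
\end{equation*}
where $c(D_\cl, \alpha) \in \{1, 2\}$ depends on the type of $D_\cl$. Since $\alpha \in \Sigma_\cl \cap S$ forces $\Dm_\cl(\alpha) = \{D_\cl', D_\cl''\} \subseteq \Dm_\cl^a$, each $c(D_\cl, \alpha) = 1$, and the formula simplifies to $\langle \alpha^\vee, \lambda \rangle = m_{D_\cl'} + m_{D_\cl''}$.

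Part (b) will then follow at once: containment of any $D_\cl \in \Dm_\cl(\alpha)$ in $\overline{D}$ yields $\langle \alpha^\vee, \lambda \rangle \ge 1$, and Proposition~\ref{prop:domcoeff} forces $D \in \Dm(\alpha)$. For part (a), containment of both colors gives $\langle \alpha^\vee, \lambda \rangle \ge 2$, leaving as the only possibility $D \in \Dm^{2a} \cap \Dm(\alpha)$ with $m_{D_\cl'} = m_{D_\cl''} = 1$. Ruling out this residual scenario will be the main obstacle. My plan would be to combine the multiplicity-one restrictions with $\rho(D) = \tfrac{1}{2}\alpha^\vee|_\Mm$ to force $\rho_\cl(D_\cl') = \rho_\cl(D_\cl'') = \tfrac{1}{2}\alpha^\vee|_{\Mm_\cl}$, and then argue that these values are incompatible with the structural constraints on type $a$ colors for the spherical root $\alpha \in \Sigma_\cl \cap S$, aiming for an integrality or typing contradiction.
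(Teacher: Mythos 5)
Your setup coincides with the paper's: realize $\overline{D}$ as the divisor of a $B$-semi-invariant section $s$ of a $G$-linearized line bundle of weight $\lambda$, restrict to $X_\cl$, write $\Div(s|_{X_\cl})=\sum m_{D_\cl}D_\cl$, and compare $\langle\alpha^\vee,\lambda\rangle$ computed on $G/H$ with the sum $\sum m_{D_\cl}\langle\alpha^\vee,\lambda_{D_\cl}\rangle$ computed on $X_\cl$ via Proposition~\ref{prop:domcoeff}. (The paper secures the additivity and the linearizability by first replacing $G$ with $G^{ss}\times C$ and normalizing sections to be $C$-invariant; your justification via the fact that two linearizations differ by a character of $G$, which kills all coroots, is essentially the same device, though you should note that linearizability itself already requires passing to such a cover.) Part (b) is complete and matches the paper.

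The gap is in finishing part (a). You correctly reduce to the residual case $\langle\alpha^\vee,\lambda\rangle=2$ with $D\in\Dm^{2a}\cap\Dm(\alpha)$ and $m_{D_\cl'}=m_{D_\cl''}=1$, but the contradiction you propose to extract does not exist: forcing $\rho_\cl(D_\cl')=\rho_\cl(D_\cl'')=\tfrac12\alpha^\vee|_{\Mm_\cl}$ is perfectly compatible with $D_\cl',D_\cl''$ being type-$a$ colors for $\alpha\in\Sigma_\cl\cap S$ --- it satisfies $\langle\rho_\cl(D_\cl^{(i)}),\alpha\rangle=1$ and $\rho_\cl(D_\cl')+\rho_\cl(D_\cl'')=\alpha^\vee|_{\Mm_\cl}$, and is exactly the configuration of $\SL_2/T$ (Example~\ref{ex:type_a_to_b}). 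So no integrality or typing contradiction on the $\rho_\cl$-side is available. The paper's actual contradiction is on the $\Mm$-side and is immediate: $D\in\Dm^{2a}\cap\Dm(\alpha)$ means $2\alpha\in\Sigma$, and spherical roots are by definition primitive elements of $\Mm$; but $\alpha\in\Sigma_\cl\subseteq\Mm_\cl\subseteq\Mm$, so $2\alpha$ is twice a lattice element and cannot be primitive. You need this observation (or an equivalent one) to close part (a); without it the argument is incomplete.
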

\begin{proof}
  Before giving the proof of the two statements, we make some
  general observations. Replacing
  $G$ with a suitable finite covering, we may assume that $G$ is of
  simply connected type, \ie $G=G^{ss} \times C$ where $G^{ss}$ is
  semi-simple simply connected and $C$ is a torus.
  Then, by \cite[Proposition 2.4 and Remark after it]{local},
  any line bundle on $X$ (resp.~on $X_0$) is $G$-linearizable.
  Moreover, as any two linearizations differ by a character of $C$,
  for any $B$-invariant effective Cartier divisor $\delta$ on $X$ (resp.~on $X_0$)
  there exists a uniquely determined $G$-linearized line bundle $L_\delta$
  on $X$ (resp.~on $X_0$) together with a $B$-semi-invariant and
  $C$-invariant global section $s_{\delta}$ (determined
  up to proportionality) such that $\Div s_{\delta} = \delta$.

  In particular, every color $D_{\cl} \in \Dm_\cl$
  defines a $G$-linearized line bundle $L_{D_\cl}$ on $X_0$
  together with a $B$-semi-invariant and $C$-invariant global section $s_{D_\cl}$,
  whose $B$-weight we denote by $\lambda_{D_\cl} \in \X(B)$.

  Fix $D \in \Dm \setminus \Fm$. Since $\overline{D}$ is a $B$-invariant effective Cartier
  divisor, it defines a $G$-linearized line bundle $L_D$ on $X$ together with a 
  $B$-semi-invariant and $C$-invariant global section $s_D$, whose $B$-weight we denote by $\lambda_D \in \X(B)$.
  As the restriction $s_D|_{X_\cl}$
  is also $B$-semi-invariant and $C$-invariant,
  there exist $\mu_{D_\cl} \in \Z_{\ge 0}$ and an isomorphism of $G$-linearized
  line bundles
  $$
  L_D|_{X_\cl} \cong
  \bigotimes_{D_\cl \in \Dm_\cl} L_{D_\cl}^{\otimes \mu_{D_\cl}}
  $$
 sending
  $s_D|_{X_\cl}$ to a nonzero multiple of $\bigotimes_{D_\cl \in \Dm_\cl} s_{D_\cl}^{\otimes \mu_{D_\cl}}$.
  In particular, we obtain $\lambda_D = \sum_{D_\cl \in \Dm_\cl} \mu_{D_\cl} \lambda_{D_\cl}$.

  \enquote{(a)}: Assume that both colors
  in $\Dm_\cl( \alpha ) = \{D'_\cl, D''_\cl\}$ are contained in $\overline{D}$. Then
  $s|_{X_\cl}$ vanishes on $D'_\cl$ and $D''_\cl$, which implies
  $\mu_{D'_\cl} \ge 1$ and $\mu_{D''_\cl} \ge 1$. Applying
  Proposition~\ref{prop:domcoeff} to the sections $s_{D_\cl}$,
  we obtain
  $$
    \langle \alpha^\vee, \lambda_D \rangle = \sum_{D_\cl \in \Dm_\cl} \mu_{D_\cl} \langle
    \alpha^\vee, \lambda_{D_\cl} \rangle = \mu_{D'_\cl} + \mu_{D''_\cl} \ge 2\text{.}
  $$
  Applying Proposition~\ref{prop:domcoeff} to the section $s|_{G/H}$, it
  follows that $\langle \alpha^\vee, \lambda_D \rangle = 2$ and $D \in
  \Dm^{2a}$, \ie $2\alpha \in \Sigma$ is a spherical root. But we
  have $\alpha \in \Sigma_0 \subseteq \Mm_\cl \subseteq \Mm$, a
  contradiction to $2\alpha \in \Mm$ being primitive.

  \enquote{(b)}: Assume that $D$ contains $D'_{\cl} \in \Dm_\cl(\alpha)$
  in its closure. Applying
  Proposition~\ref{prop:domcoeff} to the sections $s_{D_\cl}$,
  we obtain
  $$
    \langle \alpha^\vee, \lambda_D \rangle = \sum_{D_\cl \in \Dm_\cl} \mu_{D_\cl} \langle
    \alpha^\vee, \lambda_{D_\cl} \rangle \ge \mu_{D'_\cl} \ge 1\text{.}
  $$
  Applying Proposition~\ref{prop:domcoeff} to the section $s|_{G/H}$, it follows that
  $D \in \Dm(\alpha)$.
\end{proof}

\begin{remark}
Statement (b) of Proposition~\ref{prop:at-most-one} also follows
from the following geometric argument: Let $D \in \Dm \setminus \Fm$
contain a color $D_\cl \in \Dm_\cl(\alpha)$ in its closure. Then
$P_\alpha \cdot D_\cl$ is dense in $X_0$. If $P_\alpha \cdot D = D$, then
$P_\alpha \cdot \overline{D} = \overline{D}$, hence $\overline{D} \supseteq X_0$
and $D \in \Fm$, a contradiction. Therefore $P_\alpha \cdot D \ne D$ and $D \in \Dm(\alpha)$.
\end{remark}

\begin{cor}
  \label{cor:type-a}
  Let $\alpha \in \Sigma_\cl \cap S$ and $\Dm_\cl(\alpha) = \{ D'_\cl,
  D''_\cl \}$. Then there exist distinct colors $D',D'' \in \Dm
  \setminus \Fm$ containing $D'_\cl$ and $D''_\cl$ respectively in their
  closures in $X$. Furthermore, we have $\Dm(\alpha) = \{ D', D'' \}$ and $\alpha
  \in \Sigma$.
\end{cor}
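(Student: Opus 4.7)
The plan is to first show that every color of $X_\cl$ is contained in the closure of at least one color $D \in \Dm \setminus \Fm$, and then feed that information into Proposition~\ref{prop:at-most-one} to produce the two distinct colors required, concluding with the combinatorial dichotomy $|\Dm(\alpha)| \in \{1,2\}$ recalled in Section~\ref{section:ng}.

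For the first step I would apply Theorem~\ref{th:knb} to the closed orbit $X_\cl$, yielding the $B$-stable affine open subset
$$
X' = X \setminus \bigcup_{D \in \Dm \setminus \Fm} \overline{D}
$$
together with the fact that $X' \cap X_\cl$ is a single $B$-orbit. Since $X'$ is open in $X$, its trace on $X_\cl$ is open, so this $B$-orbit must be the open $B$-orbit of $X_\cl$ (this is exactly the input used in the proof of Proposition~\ref{prop:sp}). Consequently every color $D_\cl \in \Dm_\cl$, being a $B$-invariant prime divisor of $X_\cl$, lies in the complement $X_\cl \setminus X' = X_\cl \cap \bigcup_{D \in \Dm \setminus \Fm}\overline{D}$, and by irreducibility of $D_\cl$ there exists some $D \in \Dm \setminus \Fm$ with $D_\cl \subseteq \overline{D}$.

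Applied to both colors $D'_\cl, D''_\cl \in \Dm_\cl(\alpha)$, this produces $D', D'' \in \Dm \setminus \Fm$ with $D'_\cl \subseteq \overline{D'}$ and $D''_\cl \subseteq \overline{D''}$. Proposition~\ref{prop:at-most-one}(b) then forces $D', D'' \in \Dm(\alpha)$, while Proposition~\ref{prop:at-most-one}(a) forbids any single color in $\Dm \setminus \Fm$ to contain both $D'_\cl$ and $D''_\cl$ in its closure, so in particular $D' \ne D''$. Since in general $|\Dm(\alpha)| \le 2$ for $\alpha \in S$, with equality if and only if $\alpha \in \Sigma$, the existence of two distinct elements in $\Dm(\alpha)$ immediately yields $\Dm(\alpha) = \{D', D''\}$ and $\alpha \in \Sigma$.

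The step I expect to require the most care is identifying $X' \cap X_\cl$ with the \emph{open} $B$-orbit of $X_\cl$ (as opposed to merely some $B$-orbit): Theorem~\ref{th:knb}(c) produces a $B$-orbit, and openness then comes for free from the openness of $X'$ in $X$ combined with the irreducibility of $X_\cl$. Everything else is a direct assembly of the already-established Propositions~\ref{prop:at-most-one} together with the standard structural facts about $\Dm(\alpha)$.
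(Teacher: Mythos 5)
Your proposal is correct and follows essentially the same route as the paper: the paper's proof likewise derives from Theorem~\ref{th:knb}(c) that every color of $X_\cl$ lies in the closure of some color in $\Dm\setminus\Fm$, then invokes Proposition~\ref{prop:at-most-one} to get distinctness and membership in $\Dm(\alpha)$, and concludes via the structural fact that $|\Dm(\alpha)|=2$ forces $\alpha\in\Sigma$. You merely spell out the first step (identifying $X'\cap X_\cl$ with the open $B$-orbit and using irreducibility of each color) in more detail than the paper does.
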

\begin{proof}
  It follows from Theorem~\ref{th:knb}(c) that every color in
  $\Dm_\cl$ is contained in the closure of some color in
  $\Dm\setminus \Fm$.
  Hence there are $D',D'' \in \Dm \setminus \Fm$ with
  $D'_\cl \subseteq \overline{D'}$ and $D''_\cl \subseteq
  \overline{D''}$. By Proposition \ref{prop:at-most-one}, the
  colors $D', D''$ are distinct and contained in $\Dm(\alpha)$, which
  implies that they are contained in $\Dm^a$, \ie $\alpha \in \Sigma$.
\end{proof}

\begin{prop}
  \label{prop:aonecl}
  Let $D_0 \in \Dm_\cl^a$. Then there exists exactly one color $D \in
  \Dm \setminus \Fm$ such that $D_0 \subseteq \overline{D}$ where the
  closure is taken in $X$.  This assignment defines a map
  $$
    \psi \colon \Dm_\cl^a \to \Dm^a\text{,}
  $$
  such that for each $\alpha \in \Sigma_\cl \cap S$ we have a
  bijective restriction $\psi|_{\Dm_\cl(\alpha)}\colon \Dm_\cl(\alpha)
  \to \Dm(\alpha)$.
\end{prop}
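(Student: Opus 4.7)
The plan is to establish the four claims of the proposition in sequence: existence, uniqueness (for $D_\cl \in \Dm_\cl^a$), the fact that the image lies in $\Dm^a$, and the bijectivity of the restriction $\psi|_{\Dm_\cl(\alpha)}$. Most of the technical work has already been done in Proposition~\ref{prop:at-most-one} and Corollary~\ref{cor:type-a}; the main task here is just to splice these ingredients together.

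First I would prove existence for any color $D_\cl \in \Dm_\cl$ (not just those in $\Dm_\cl^a$). By Theorem~\ref{th:knb}(c) the set $X'\cap X_\cl$ is the open $B$-orbit of $X_\cl$, so its complement in $X_\cl$ is the union of the $B$-invariant prime divisors of $X_\cl$. Thus
\[
\bigcup_{D_\cl' \in \Dm_\cl} D_\cl' \;\subseteq\; X_\cl \setminus (X'\cap X_\cl) \;=\; \bigcup_{D \in \Dm\setminus\Fm} \bigl(\overline{D}\cap X_\cl\bigr)\text{,}
\]
and since each $D_\cl$ is irreducible it must lie in $\overline{D}\cap X_\cl$ for some $D \in \Dm\setminus\Fm$.

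For uniqueness when $D_\cl \in \Dm_\cl^a$, I would pick a simple root $\alpha \in \varsigma_\cl(D_\cl) \cap \Sigma_\cl$; this is possible because $\varsigma_\cl(D_\cl) \subseteq S$ and $\varsigma_\cl(D_\cl) \cap \Sigma_\cl \ne \emptyset$ by definition of $\Dm_\cl^a$. In particular $D_\cl$ is one of the two elements of $\Dm_\cl(\alpha)$. Now suppose $D_\cl \subseteq \overline{D}$ for some $D \in \Dm\setminus\Fm$. Proposition~\ref{prop:at-most-one}(b) forces $D \in \Dm(\alpha)$, which by Corollary~\ref{cor:type-a} consists of exactly two distinct colors $D', D'' \in \Dm\setminus\Fm$, whose closures contain the two distinct elements of $\Dm_\cl(\alpha)$ respectively. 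Proposition~\ref{prop:at-most-one}(a) rules out that $D$ contains both elements of $\Dm_\cl(\alpha)$ in its closure, so $D$ is uniquely determined by which element of $\Dm_\cl(\alpha)$ equals $D_\cl$. This proves uniqueness and simultaneously shows $\psi(D_\cl) \in \Dm(\alpha) \subseteq \Dm^a$, the last inclusion because $\alpha \in \Sigma$ (by Corollary~\ref{cor:type-a}).

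Finally, for $\alpha \in \Sigma_\cl \cap S$ the restriction $\psi|_{\Dm_\cl(\alpha)} \colon \Dm_\cl(\alpha) \to \Dm(\alpha)$ is bijective because both sides have cardinality $2$ and the map is injective: if the two elements of $\Dm_\cl(\alpha)$ were both contained in the closure of the same $D \in \Dm(\alpha)$, this would again contradict Proposition~\ref{prop:at-most-one}(a). The main (and really only) obstacle in this argument is verifying that the simple root $\alpha$ extracted from $\varsigma_\cl(D_\cl)$ does in fact lie in $\Sigma_\cl$, allowing Corollary~\ref{cor:type-a} to apply; this is immediate from the type-a condition.
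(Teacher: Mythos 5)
Your proposal is correct and follows essentially the same route as the paper: existence via Theorem~\ref{th:knb}(c), and uniqueness plus the identification of the image by combining Proposition~\ref{prop:at-most-one}(a),(b) with Corollary~\ref{cor:type-a} applied to a simple root $\alpha\in\varsigma_\cl(D_\cl)\subseteq\Sigma_\cl$. The only cosmetic difference is that you argue existence directly, whereas the paper reads it off from Corollary~\ref{cor:type-a} (which itself uses the same ingredient).
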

\begin{proof}
  Let $\alpha \in \varsigma_\cl(D_\cl)$ and $\Dm_\cl(\alpha) = \{
  D_\cl, D'_\cl \}$ with $D'_\cl$ distinct from $D_\cl$.
  By Corollary \ref{cor:type-a}, we have $\alpha \in
  \Sigma$ and $\Dm(\alpha) = \{ D, D' \}$ for distinct $D,D' \in
  \Dm \setminus \Fm$ such that $D_\cl \subseteq \overline{D}$ and
  $D_\cl' \subseteq \overline{D}'$. Assume that there
  exists a further color $D'' \in \Dm \setminus \Fm$ containing
  $D_\cl$ in its closure. By Proposition~\ref{prop:at-most-one}(b),
  we have $D'' \in \Dm(\alpha)$. As $D'' \neq D$, we
  obtain $D'' = D'$. Hence $D'$ contains both colors of
  $\Dm_\cl(\alpha)$ in its closure, a contradiction to
  Proposition~\ref{prop:at-most-one}(a).
\end{proof}

\begin{prop}
  \label{prop:apv}
  In the situation of Proposition~\ref{prop:aonecl}, we have
  $$\rho_\cl(D_\cl) = \pi(\rho(\psi(D_\cl)))\text{.}$$
\end{prop}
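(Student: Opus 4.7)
The plan is to fix an arbitrary $\chi \in \Mm_\cl$, set $D := \psi(D_\cl)$, and reduce the desired identity $\langle \rho_\cl(D_\cl),\chi\rangle = \langle \pi(\rho(D)),\chi\rangle = \langle \rho(D),\chi\rangle$ to a divisor computation on $X_\cl$. I would start from a $B$-semi-invariant $f \in \C(G/H)^\times$ of weight $\chi$ and extend it to $X$. Since $\rho(D') \in \Cm$ for every $D' \in \Fm$ and $\nu_Y \in \Cm$ for every $G$-invariant prime divisor $Y$ (as $X$ is simple with closed orbit $X_\cl$), and since $\chi \in \Cm^\perp$, one obtains
$$
\Div_X f = \sum_{D'' \in \Dm \setminus \Fm} \langle \rho(D''),\chi\rangle\, \overline{D''}\text{.}
$$
No component on the right-hand side contains $X_\cl$, so the restriction $f|_{X_\cl}$ is a nonzero $B$-semi-invariant rational function on $X_\cl$ of weight $\chi$, and hence a nonzero scalar multiple of $f_\chi^\cl$; in particular $\langle \rho_\cl(D_\cl),\chi\rangle = \nu_{D_\cl}(f|_{X_\cl})$.

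To compute $\nu_{D_\cl}(f|_{X_\cl})$, I would reuse the line bundle formalism from the proof of Proposition~\ref{prop:at-most-one}. For each $D'' \in \Dm \setminus \Fm$ the Cartier divisor $\overline{D''}$ yields a $G$-linearized line bundle $L_{D''}$ together with a $B$-semi-invariant, $C$-invariant section $s_{D''}$ of weight $\lambda_{D''}$ satisfying $\Div s_{D''} = \overline{D''}$. Setting $a(D'') := \langle \rho(D''),\chi\rangle$, the rational section $\sigma := \prod_{D''} s_{D''}^{a(D'')}$ of $L := \bigotimes_{D''} L_{D''}^{\otimes a(D'')}$ has the same divisor as $f$, so by normality of $X$ the ratio $f \sigma^{-1}$ is a global trivialization of $L^{-1}$, which restricts to a trivialization of $L^{-1}|_{X_\cl}$. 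Therefore $\Div_{X_\cl}(f|_{X_\cl}) = \Div_{X_\cl}(\sigma|_{X_\cl})$, and writing $\Div_{X_\cl}(s_{D''}|_{X_\cl}) = \sum_{D'_\cl \in \Dm_\cl} \mu_{D'_\cl, D''}\, D'_\cl$ with $\mu_{D'_\cl, D''} \in \Z_{\ge 0}$ (exactly as in the proof of Proposition~\ref{prop:at-most-one}) gives for the coefficient of $D_\cl$ the expression $\sum_{D''} a(D'')\, \mu_{D_\cl, D''}$. By Proposition~\ref{prop:aonecl}, $\mu_{D_\cl, D''}$ vanishes for every $D'' \ne D$, so this collapses to $\mu_{D_\cl, D} \cdot \langle \rho(D),\chi\rangle$.

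The main step left is to verify that $\mu_{D_\cl, D} = 1$. Since $D_\cl \in \Dm^a_\cl$, I would pick $\alpha \in \varsigma_\cl(D_\cl)$; by Corollary~\ref{cor:type-a}, $\alpha \in \Sigma \cap S$ and $D \in \Dm(\alpha)$. Applying $\alpha^\vee$ to the weight identity $\lambda_D = \sum_{D'_\cl \in \Dm_\cl} \mu_{D'_\cl, D}\, \lambda_{D'_\cl}$ coming from the isomorphism $L_D|_{X_\cl} \cong \bigotimes_{D'_\cl \in \Dm_\cl} L_{D'_\cl}^{\otimes \mu_{D'_\cl, D}}$ and invoking Proposition~\ref{prop:domcoeff} yields $1$ on the left-hand side; on the right-hand side, $\langle \alpha^\vee, \lambda_{D'_\cl}\rangle$ vanishes for all $D'_\cl \notin \Dm_\cl(\alpha)$, while for the second element $E_\cl$ of $\Dm_\cl(\alpha)$ one has $\mu_{E_\cl, D} = 0$ by Proposition~\ref{prop:at-most-one}(a). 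Hence only the term with $D'_\cl = D_\cl$ survives, equal to $\mu_{D_\cl, D}$, forcing $\mu_{D_\cl, D} = 1$ and completing the proof. The most delicate point is justifying the identity $\Div_{X_\cl}(f|_{X_\cl}) = \Div_{X_\cl}(\sigma|_{X_\cl})$: it rests on the Cartier property of each $\overline{D''}$ from \cite[2.2, Proposition]{l-brion-pic} together with normality of $X$, which together guarantee that $f \sigma^{-1}$ is genuinely a regular nowhere vanishing section of $L^{-1}$ and not merely a rational one.
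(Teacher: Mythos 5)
Your proof is correct, but it follows a genuinely different route from the paper's. The paper lifts a $B$-semi-invariant $f_\chi\in\C(X_\cl)$ of weight $\chi$ \emph{from} $X_\cl$ to a $B$-semi-invariant regular function on $X\setminus\bigcup_{D\in\Dm\setminus\Fm}\overline{D}$ using Knop's extension theorem (Theorem~\ref{th:knb}(d)), deduces only the one-directional implication $\langle\rho(\psi(D_\cl)),\chi\rangle\ge 0\Rightarrow\langle\rho_\cl(D_\cl),\chi\rangle\ge 0$, concludes $\rho_\cl(D_\cl)=k\,\pi(\rho(\psi(D_\cl)))$ with $k\in\Q_{\ge0}$ by duality, and fixes $k=1$ by pairing with some $\alpha\in\varsigma_\cl(D_\cl)$. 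You go in the opposite direction, restricting a semi-invariant from $G/H$ and computing $\Div_{X_\cl}(f|_{X_\cl})$ exactly via the Cartier-divisor/line-bundle formalism of Proposition~\ref{prop:at-most-one}; the key inputs are that $\chi\in\Cm^\perp$ kills the coefficients of $\Div_X f$ along $\Fm$ and along the $G$-invariant divisors (valid since $X$ is simple with closed orbit $X_\cl$), Proposition~\ref{prop:aonecl} to kill all cross terms $\mu_{D_\cl,D''}$ with $D''\ne\psi(D_\cl)$, and Proposition~\ref{prop:domcoeff} to force $\mu_{D_\cl,\psi(D_\cl)}=1$. Your route buys more --- the multiplicity computation is essentially the type-$a$ row of Table~\ref{tab:int} worked out in Section~\ref{section:ico} --- at the cost of invoking the linearization machinery again, whereas the paper's argument needs only a sign estimate plus the normalization on $\varsigma_\cl(D_\cl)$. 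One minor point: Proposition~\ref{prop:at-most-one}(a) alone does not yield $\mu_{E_\cl,D}=0$ for the second color $E_\cl\in\Dm_\cl(\alpha)$; it only says the two multiplicities cannot both be positive, so you should combine it with $D_\cl\subseteq\overline{\psi(D_\cl)}$, i.e.\ $\mu_{D_\cl,D}\ge1$ (or simply read off $\mu_{D_\cl,D}=1$ from $1=\mu_{D_\cl,D}+\mu_{E_\cl,D}$ together with $\mu_{D_\cl,D}\ge1$ and $\mu_{E_\cl,D}\ge 0$).
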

\begin{proof}
  We set
  $$
    Y \coloneqq X \setminus \bigcup_{D \in \Dm \setminus \Fm}
    \overline{D} \text{ and } Z \coloneqq X \setminus
    \bigcup_{D \in \Dm \setminus (\Fm \cup \psi( D_\cl ) )}
    \overline{D}\text{,}
  $$
  so that $Y = Z \setminus \overline{\psi( D_\cl)}$. By Theorem~\ref{th:knb}(c),
  $Y \cap X_\cl$ is the open $B$-orbit in $X_\cl$.
  Let $\chi \in \Mm_\cl$, and let $f_\chi \in \C( X_\cl )$ be a
  $B$-semi-invariant rational function of weight $\chi$.
  Then $f_\chi$ is a regular function on $Y \cap X_\cl$,
  and by Theorem~\ref{th:knb}(d) there exists a $B$-semi-invariant
  $f'_\chi \in \C[Y]$ of the same weight $\chi$
  such that $f'_\chi|_{Y \cap X_\cl} = f_\chi$. 

  Assume $\langle \rho( \psi( D_\cl ) ), \chi \rangle \ge 0$,
  \ie $f'_\chi$ extends to a regular function on $Z$. Then $f_\chi
  = f'_\chi|_{Y \cap X_\cl}$ extends to a regular function on $Z \cap
  X_\cl$, therefore $\langle \rho_\cl( D_\cl ), \chi \rangle \ge 0$.

  We have shown that $\langle \rho( \psi( D_\cl ) ), \chi
  \rangle \ge 0$ implies $\langle \rho_\cl( D_\cl ), \chi \rangle \ge
  0$ for every $\chi \in \Mm_\cl$. It follows that 
  $\rho_\cl( D_\cl ) = k \pi(\rho(\psi(D_\cl)))$
  for some $k \in \Q_{\ge0}$. For every $\alpha \in \varsigma_\cl(D_\cl)$
  we have $\langle \rho(\psi(D_\cl)), \alpha\rangle = 1 =
  \langle \rho_\cl(D_\cl), \alpha \rangle$, hence $k=1$.
\end{proof}

\begin{cor}
  \label{cor:s-of-psi}
  Let $D_\cl \in \Dm_\cl^a$. Then we have $\varsigma_\cl(D_\cl) =
  \varsigma(\psi(D_\cl)) \cap \Sigma_\cl$.
\end{cor}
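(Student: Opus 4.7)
The plan is to prove the corollary by a direct double-inclusion argument, using Proposition~\ref{prop:aonecl} together with the standard observation that for a type-$a$ color $D_0$ one has $\varsigma_0(D_0)\subseteq\Sigma_0\cap S$ (and analogously $\varsigma(\psi(D_0))\subseteq\Sigma\cap S$, since $\psi(D_0)\in\Dm^a$). These two facts are recalled in Section~\ref{section:ng} and are the only external inputs required.

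For the inclusion \enquote{$\subseteq$}, fix $\alpha\in\varsigma_\cl(D_\cl)$. Being in the image of $\varsigma_\cl$ applied to a color in $\Dm^a_\cl$, the element $\alpha$ automatically lies in $\Sigma_\cl\cap S$. In particular $\alpha\in\Sigma_\cl$, and Proposition~\ref{prop:aonecl} provides a bijective restriction $\psi|_{\Dm_\cl(\alpha)}\colon \Dm_\cl(\alpha)\to\Dm(\alpha)$. Since $D_\cl\in\Dm_\cl(\alpha)$ by definition of $\varsigma_\cl$, we deduce $\psi(D_\cl)\in\Dm(\alpha)$, i.e.\ $\alpha\in\varsigma(\psi(D_\cl))$, and hence $\alpha\in\varsigma(\psi(D_\cl))\cap\Sigma_\cl$.

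For the reverse inclusion \enquote{$\supseteq$}, take $\alpha\in\varsigma(\psi(D_\cl))\cap\Sigma_\cl$. Since $\psi(D_\cl)\in\Dm^a$, we have $\varsigma(\psi(D_\cl))\subseteq\Sigma\cap S$, so in particular $\alpha\in S$, and thus $\alpha\in\Sigma_\cl\cap S$. Applying Proposition~\ref{prop:aonecl} once more, $\psi$ restricts to a bijection $\Dm_\cl(\alpha)\to\Dm(\alpha)$; since $\psi(D_\cl)$ belongs to its image $\Dm(\alpha)$, injectivity forces $D_\cl\in\Dm_\cl(\alpha)$, that is, $\alpha\in\varsigma_\cl(D_\cl)$.

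There is no genuine obstacle: the corollary is essentially a bookkeeping consequence of the bijection established in Proposition~\ref{prop:aonecl}, once one uses that type-$a$ colors have $\varsigma$-images inside the spherical roots. The only mild subtlety is to note on each side that membership in $\Sigma_\cl$ forces the relevant simple root to lie in $S$, so that the bijection on $\Dm_\cl(\alpha)$ is applicable.
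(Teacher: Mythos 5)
Your forward inclusion is fine: for $\alpha\in\varsigma_\cl(D_\cl)$ with $D_\cl\in\Dm_\cl^a$ one has $\alpha\in\Sigma_\cl\cap S$, and the restriction $\psi|_{\Dm_\cl(\alpha)}\colon\Dm_\cl(\alpha)\to\Dm(\alpha)$ from Proposition~\ref{prop:aonecl} immediately gives $\psi(D_\cl)\in\Dm(\alpha)$. The reverse inclusion, however, has a genuine gap. From $\alpha\in\varsigma(\psi(D_\cl))\cap\Sigma_\cl$ you know $\psi(D_\cl)\in\Dm(\alpha)$ and that $\psi|_{\Dm_\cl(\alpha)}$ is a bijection onto $\Dm(\alpha)$; surjectivity of this restriction yields \emph{some} $D'_\cl\in\Dm_\cl(\alpha)$ with $\psi(D'_\cl)=\psi(D_\cl)$, but to conclude $D'_\cl=D_\cl$ you would need $\psi$ to be injective on $\Dm_\cl(\alpha)\cup\{D_\cl\}$, which Proposition~\ref{prop:aonecl} does not provide. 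Global injectivity of $\psi$ on $\Dm_\cl^a$ is exactly the content of Proposition~\ref{prop:dabij}, whose proof in the paper \emph{uses} this corollary, so invoking it here would be circular. Nor is injectivity obvious at this stage: the closure of a single color $D$ can contain several distinct colors of $X_\cl$ (see Table~\ref{tab:int} and Example~\ref{ex:new_sph_root_2}, where $\overline{D''}\cap X_\cl=D''_\cl+D_{\cl,\beta_1}$), and Proposition~\ref{prop:at-most-one}(a) only excludes the case where the two contained colors are the two elements of a single set $\Dm_\cl(\alpha)$.

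The missing ingredient is Proposition~\ref{prop:apv}, which is how the paper proves the corollary in one line: since $\rho_\cl(D_\cl)=\pi(\rho(\psi(D_\cl)))$ and, for type-$a$ colors, $\varsigma(D)=\{\gamma\in\Sigma:\langle\rho(D),\gamma\rangle=1\}$ (recalled in Section~\ref{section:ng}), one gets $\varsigma_\cl(D_\cl)=\{\alpha\in\Sigma_\cl:\langle\rho_\cl(D_\cl),\alpha\rangle=1\}=\{\alpha\in\Sigma_\cl:\langle\rho(\psi(D_\cl)),\alpha\rangle=1\}=\varsigma(\psi(D_\cl))\cap\Sigma_\cl$. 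Your reverse inclusion can be repaired the same way: for $\alpha\in\varsigma(\psi(D_\cl))\cap\Sigma_\cl$ one has $\langle\rho_\cl(D_\cl),\alpha\rangle=\langle\rho(\psi(D_\cl)),\alpha\rangle=1$, hence $\alpha\in\varsigma_\cl(D_\cl)$.
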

\begin{proof}
  We have $\varsigma_\cl(D_\cl) = \{\alpha \in \Sigma_\cl : \langle
  \rho_\cl(D_\cl), \alpha \rangle = 1\} = \{\alpha \in \Sigma_\cl :
  \langle \rho(\psi(D_\cl)), \alpha \rangle = 1\} =
  \varsigma(\psi(D_\cl)) \cap \Sigma_\cl$.
\end{proof}

\begin{prop}
  \label{prop:dabij}
  The map $\psi$ induces a bijection
  $$
    \psi \colon \Dm_\cl^a \to \{ D \in \Dm^a : \varsigma( D ) \cap
    \Sigma_\cl \ne \emptyset \}\text{.}
  $$
\end{prop}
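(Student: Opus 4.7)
The plan is to combine the bijection on fibers from Proposition~\ref{prop:aonecl} with the description of $\varsigma_\cl$ from Corollary~\ref{cor:s-of-psi}. First, I would observe that the map indeed lands in the claimed codomain: for $D_\cl \in \Dm_\cl^a$ we have $\emptyset \ne \varsigma_\cl(D_\cl) \subseteq \Sigma_\cl$, and by Corollary~\ref{cor:s-of-psi} this set equals $\varsigma(\psi(D_\cl)) \cap \Sigma_\cl$, so $\varsigma(\psi(D_\cl)) \cap \Sigma_\cl \ne \emptyset$. In particular $\varsigma(\psi(D_\cl))$ meets $\Sigma$, forcing $\psi(D_\cl) \in \Dm^a$ (which is already recorded by Proposition~\ref{prop:aonecl}).

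For injectivity, suppose $\psi(D_\cl) = \psi(D_\cl') =: D$. By Corollary~\ref{cor:s-of-psi}, $\varsigma_\cl(D_\cl) = \varsigma(D) \cap \Sigma_\cl = \varsigma_\cl(D_\cl')$, which is nonempty. Pick any $\alpha$ in this common set; then $D_\cl, D_\cl' \in \Dm_\cl(\alpha)$ with $\alpha \in \Sigma_\cl \cap S$, so by the bijectivity of $\psi|_{\Dm_\cl(\alpha)} \colon \Dm_\cl(\alpha) \to \Dm(\alpha)$ established in Proposition~\ref{prop:aonecl}, we conclude $D_\cl = D_\cl'$.

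For surjectivity, let $D \in \Dm^a$ with $\varsigma(D) \cap \Sigma_\cl \ne \emptyset$ and choose $\alpha \in \varsigma(D) \cap \Sigma_\cl$. Since $D \in \Dm^a$, we have $\varsigma(D) \subseteq \Sigma$, so $\alpha \in \Sigma \cap S$ and $D \in \Dm(\alpha)$. Applying the bijection $\psi|_{\Dm_\cl(\alpha)} \colon \Dm_\cl(\alpha) \to \Dm(\alpha)$ again, there is $D_\cl \in \Dm_\cl(\alpha)$ with $\psi(D_\cl) = D$. Since $\alpha \in \varsigma_\cl(D_\cl) \cap \Sigma_\cl$, this element $D_\cl$ lies in $\Dm_\cl^a$, which completes the proof.

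The argument is essentially bookkeeping: all the substantive work (the existence of the map, its behavior on each $\Dm_\cl(\alpha)$, and the compatibility with $\varsigma_\cl$) has been done in the preceding propositions, so I do not anticipate any real obstacle here.
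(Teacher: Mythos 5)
Your proof is correct and follows essentially the same route as the paper's: both arguments reduce the statement to the fiberwise bijections $\psi|_{\Dm_\cl(\alpha)}\colon \Dm_\cl(\alpha) \to \Dm(\alpha)$ from Proposition~\ref{prop:aonecl} together with the identity $\varsigma_\cl(D_\cl) = \varsigma(\psi(D_\cl)) \cap \Sigma_\cl$ of Corollary~\ref{cor:s-of-psi}, using that $\Dm^a_\cl$ and the target set are the unions of the $\Dm_\cl(\alpha)$ and $\Dm(\alpha)$ over $\alpha \in \Sigma_\cl \cap S$. You merely spell out the injectivity and surjectivity checks that the paper leaves implicit.
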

\begin{proof}
  We have $\Dm^a_\cl = \bigcup_{\alpha \in \Sigma_0 \cap S}
  \Dm_\cl(\alpha)$ and
  $$
    \bigcup_{\alpha \in \Sigma_0 \cap S} \Dm(\alpha) = \{ D \in \Dm^a
    : \varsigma( D ) \cap \Sigma_\cl \ne \emptyset \}\text{.}
  $$
  For every $D_\cl \in \Dm_\cl^a$, by Corollary \ref{cor:s-of-psi}, we
  obtain $\psi( D_\cl ) \in \Dm( \alpha )$ for every $\alpha \in
  \varsigma_\cl( D_\cl )$. The claimed bijectivity now follows from
  the bijectivity of the restrictions $\psi|_{\Dm_\cl( \alpha )}
  \colon \Dm_\cl( \alpha ) \to \Dm( \alpha )$.
\end{proof}

\begin{prop}
  \label{rem:a-ok}
  Let $D \in \Dm^a$ with $\varsigma(D) \cap \Sigma_\cl \ne
  \emptyset$. Then for every $\alpha \in \varsigma(D)$ either $\alpha
  \in \Sigma_\cl$ or $\cone(\Sigma_\cl) \subseteq \cone(\Sigma
  \setminus \{\alpha\})$.
\end{prop}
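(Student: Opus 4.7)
The plan is to reduce to Lemma~\ref{lemma:si}, which produces the required dichotomy as soon as $\langle \Cm, \alpha \rangle \le 0$. Since $D \in \Dm^a$ and $\alpha \in \varsigma(D)$, we have $\alpha \in \Sigma \cap S$ and $\Dm(\alpha) = \{D, D''\}$ with $\rho(D) + \rho(D'') = \alpha^\vee|_\Mm$. By Proposition~\ref{prop:dabij}, $D = \psi(D_\cl)$ for a unique $D_\cl \in \Dm_\cl^a$, so in particular $D \notin \Fm$. Everything then hinges on showing that $D''$ also lies outside $\Fm$.

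The key step is a proof by contradiction: assume $D'' \in \Fm$. Then $\rho(D'') \in \Cm \subseteq \Mm_\cl^\perp = \ker \pi$, so $\pi(\rho(D'')) = 0$. Combined with $\pi(\rho(D)) = \rho_\cl(D_\cl)$ from Proposition~\ref{prop:apv}, applying $\pi$ to the relation $\rho(D) + \rho(D'') = \alpha^\vee|_\Mm$ yields
$$\rho_\cl(D_\cl) = \pi\bigl(\alpha^\vee|_\Mm\bigr) = \alpha^\vee|_{\Mm_\cl}\text{.}$$
By Corollary~\ref{cor:s-of-psi} the set $\varsigma_\cl(D_\cl) = \varsigma(D) \cap \Sigma_\cl$ is non-empty, so pick $\beta$ from it. The axiom $\langle \rho_\cl(D_\cl), \beta \rangle = 1$ then reads $\langle \alpha^\vee, \beta \rangle = 1$. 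But $\alpha, \beta \in S$ are simple roots of $G$, and the Cartan integer $\langle \alpha^\vee, \beta \rangle$ equals $2$ if $\alpha = \beta$ and is non-positive otherwise---never~$1$. Contradiction.

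Knowing $D, D'' \notin \Fm$, i.e.\ $\Fm \cap \Dm(\alpha) = \emptyset$, I would verify $\langle \Cm, \alpha \rangle \le 0$ on the generators of $\Cm$. For $\nu \in \Vm \cap \Cm$ this is immediate since $\alpha \in \Sigma$. For $D' \in \Fm$ one has $\alpha \notin \varsigma(D')$; a brief case analysis on the type of $D'$ gives $\langle \rho(D'), \alpha \rangle \le 0$: if $D' \in \Dm^a$, the axiom gives $\langle \rho(D'), \alpha \rangle < 1$, and the integrality $\langle \rho(D'), \alpha \rangle \in \Z$ (since $\rho(D') \in \Nm$ and $\alpha \in \Mm$) upgrades this to $\le 0$; in the remaining types, $\rho(D')$ is a non-negative rational multiple of some $\gamma^\vee|_\Mm$ with $\gamma \ne \alpha$, and the Cartan integer $\langle \gamma^\vee, \alpha \rangle \le 0$ delivers the bound. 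Lemma~\ref{lemma:si} applied with $\gamma = \alpha$ now yields the dichotomy. The only substantive difficulty is the Cartan-integer contradiction ruling out $D'' \in \Fm$; the rest is bookkeeping.
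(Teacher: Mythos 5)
Your proposal is correct, and its top-level strategy is the same as the paper's: establish $\Fm \cap \Dm(\alpha) = \emptyset$, deduce $\langle \Cm, \alpha \rangle \subseteq \Q_{\le0}$ from the generators of $\Cm$, and invoke Lemma~\ref{lemma:si} (whose conclusion $\Sigma_\cl \subseteq \cone(\Sigma \setminus \{\alpha\})$ does upgrade to the stated inclusion of cones). The difference lies in how the two colors of $\Dm(\alpha)$ are excluded from $\Fm$. The paper treats them uniformly and uses only the structure theory of colors from Section~\ref{section:ng}: fixing $\alpha' \in \varsigma(D) \cap \Sigma_\cl$, it shows $\langle \rho(D'), \alpha' \rangle \ne 0$ for every $D' \in \Dm(\alpha)$ (equal to $1$ if $\alpha' \in \varsigma(D')$, strictly negative otherwise via $\rho(D) + \rho(D') = \alpha^\vee|_{\Mm}$ and $\langle \alpha^\vee, \alpha' \rangle \le 0$), while $\rho(\Fm) \subseteq \Cm$ annihilates $\Sigma_\cl$. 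You instead dispose of $D$ by citing Proposition~\ref{prop:dabij} (whose target lies in $\Dm \setminus \Fm$ by Proposition~\ref{prop:aonecl}) and of $D''$ by the contradiction $\rho_\cl(D_\cl) = \alpha^\vee|_{\Mm_\cl}$ against the type-$a$ axiom, which additionally requires Proposition~\ref{prop:apv} and Corollary~\ref{cor:s-of-psi}. Unwinding your Cartan-integer contradiction shows it is essentially the paper's computation (the pairing of $\rho(D'')$ with $\beta \in \Sigma_\cl$ equals $\langle \alpha^\vee, \beta \rangle - 1 \ne 0$, while membership in $\Fm$ would force it to vanish), repackaged through $\pi$; so your route is sound but leans on heavier, already-proved machinery where a direct axiom check suffices. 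Your explicit verification that $\langle \rho(D'), \alpha \rangle \le 0$ for $D' \notin \Dm(\alpha)$ merely re-derives the fact $\Dm(\alpha) = \{D \in \Dm : \langle \rho(D), \alpha \rangle > 0\}$ for $\alpha \in \Sigma \cap S$ recorded in Section~\ref{section:ng}, which the paper cites directly.
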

\begin{proof}
  Let $\alpha \in \varsigma(D)$ and $D' \in \Dm(\alpha)$. Assume that
  $\alpha \notin \Sigma_\cl$.  Fix $\alpha' \in \varsigma(D)
  \cap \Sigma_\cl$.  If $\alpha' \in \varsigma( D' )$, then $\langle
  \rho( D' ), \alpha' \rangle = 1$. Otherwise we have $D' \ne D$,
  \ie $\Dm(\alpha) = \{D, D'\}$, hence $\rho(D) + \rho(D') = \alpha^\vee|_{\Mm}$, and
  therefore
  $$
    \langle \rho( D ), \alpha' \rangle + \langle \rho( D' ), \alpha'
    \rangle = \langle \alpha^\vee, \alpha' \rangle \le 0\text{.}
  $$
  Since $\langle \rho( D ), \alpha' \rangle = 1$, it follows that $\langle \rho(
  D' ), \alpha' \rangle < 0$.  In both cases, we have $\langle \rho(
  D' ), \alpha' \rangle \ne 0$.  Since $\rho(\Fm) \subseteq \Cm$
  and $\Sigma_0 \subseteq \Cm^\perp$, we
  have $\langle \rho(\Fm), \alpha' \rangle
  =\{0\}$, hence $D' \notin \Fm$. As $D' \in \Dm(\alpha)$ was
  chosen arbitrarily, we obtain $\Fm \cap \Dm(\alpha) =
  \emptyset$. As $\langle \rho(D''), \alpha \rangle \le 0$ for every $D'' \in \Dm \setminus \Dm(\alpha)$,
  this implies $\langle \Cm, \alpha \rangle \subseteq
  \Q_{\le0}$, so that Lemma~\ref{lemma:si} completes the proof.
\end{proof}

\section{Proof of Theorem~\ref{theorem:2}}
\label{section:p2}
By Theorem~\ref{thm:correspondence}, the $G$-orbits contained in the closure of $X_0$ in $X$ correspond
to the colored cones $(\Cm', \Fm')$ in the colored fan of $G/H
\hookrightarrow X$ such that $(\Cm, \Fm)$ is a face of $(\Cm', \Fm')$.
For the remainder of this section, let $X_1
\subseteq \overline{X_0} \subseteq X$ be a $G$-orbit corresponding to
the colored cone $(\Cm', \Fm')$. The $G$-orbit $X_1$ also
corresponds to a colored cone $(\Cm'_0, \Fm'_0)$ in the colored fan of
$X_\cl \hookrightarrow \overline{X_\cl}$. Recall the map
$\Phi\colon\Pm(\Dm) \to \Pm(\Dm_0)$ from Theorem~\ref{theorem:2},
which is defined by 
$$
  \Phi(\Fm')\coloneqq \psi^{-1}(\Fm') \cup \bigcup_{\alpha \in S :
    \Dm(\alpha)\subseteq \Fm'} \Dm_\cl(\alpha)\text{.}
$$
In order to prove Theorem~\ref{theorem:2}, we have to show that
$(\Cm'_0, \Fm'_0) = ( \pi(\Cm' ), \Phi( \Fm' ) )$.

\begin{prop}
  \label{prop:cone-of-orbit}
  We have $\Cm'_0 = \pi( \Cm' )$.
\end{prop}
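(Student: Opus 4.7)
My approach is to reduce to the simple case and then prove both inclusions of $\pi(\Cm') = \Cm'_\cl$. First I would replace $X$ with $X_{X_1}$, so that $X_1$ becomes the closed orbit of $X$ and $(\Cm', \Fm') = (\Cm(X), \Fm(X))$. Then $X_1$ is the closed orbit of the simple spherical subvariety $\overline{X_0}_{X_1} \subseteq X$, and $(\Cm'_\cl, \Fm'_\cl)$ is the colored cone of this simple embedding of $X_0$.

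For $\pi(\Cm') \subseteq \Cm'_\cl$ I argue on generators of $\Cm'$. Recall that $\Cm'$ is generated by $\rho(\Fm')$ and by the valuations $\nu_Y$ of $G$-invariant prime divisors $Y \subseteq X$. For $D \in \Fm$, we have $\rho(D) \in \Cm \subseteq \lspan \Cm = \ker \pi$, so $\pi(\rho(D)) = 0 \in \Cm'_\cl$. For $D \in \Fm' \setminus \Fm$ of type $a$ lying in the image of $\psi$, Proposition~\ref{prop:apv} gives $\pi(\rho(D)) = \rho_\cl(\psi^{-1}(D))$; this lies in $\Cm'_\cl$ because $X_1 \subseteq \overline{D}$ forces $\psi^{-1}(D) \in \Fm'_\cl$. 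The remaining colors in $\Fm'\setminus\Fm$ and the invariant valuations $\nu_Y$ are then handled using Proposition~\ref{prop:v} (which supplies $\pi(\Vm) = \Vm_\cl$) combined with a parallel geometric analysis of $\overline{D} \cap \overline{X_0}$ and $Y \cap \overline{X_0}$.

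For the reverse inclusion $\Cm'_\cl \subseteq \pi(\Cm')$, I argue on generators of $\Cm'_\cl$. A type-$a$ color $D_\cl \in \Fm'_\cl \cap \Dm^a_\cl$ lifts via $\psi$ to a color $\psi(D_\cl) \in \Dm \setminus \Fm$, and the chain $X_1 \subseteq \overline{D_\cl} \subseteq \overline{\psi(D_\cl)}$ (from $D_\cl \in \Fm'_\cl$ and $D_\cl \subseteq \overline{\psi(D_\cl)}$) shows $\psi(D_\cl) \in \Fm'$; by Proposition~\ref{prop:apv} we then obtain $\rho_\cl(D_\cl) = \pi(\rho(\psi(D_\cl))) \in \pi(\Cm')$. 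Non-type-$a$ colors in $\Fm'_\cl$ and $G$-invariant valuations in $\Vm_\cl$ lift analogously to generators of $\Cm'$, the latter via Proposition~\ref{prop:v}.

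The main obstacle is the treatment of the non-type-$a$ colors (types $2a$, $b1$, $b2$), since Proposition~\ref{prop:apv} addresses only type $a$. A uniform alternative bypassing the case analysis is a duality argument showing $(\pi(\Cm'))^\vee = (\Cm')^\vee \cap \Mm_{\cl,\Q} = (\Cm'_\cl)^\vee$ in $\Mm_{\cl,\Q}$, using the characterization of $(\Cm')^\vee \cap \Mm$ (resp.\ $(\Cm'_\cl)^\vee \cap \Mm_\cl$) as the $B$-weights of regular functions on the affine open $X \setminus \bigcup_{D \notin \Fm'} \overline{D}$ (resp.\ the analogous open in $\overline{X_0}_{X_1}$) from Theorem~\ref{th:knb}, together with the extension property Theorem~\ref{th:knb}(d) to compare weights via restriction to $\overline{X_0}_{X_1}$.
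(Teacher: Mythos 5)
Your closing ``uniform alternative'' is, in substance, exactly the paper's proof: the paper characterizes $(\Cm')^\vee\cap\Mm$ as the set of weights $\chi$ for which $f_\chi$ is defined on an open subset of $X_1$ (Knop's criterion), does the same for $(\Cm'_\cl)^\vee\cap\Mm_\cl$, identifies the two sets after intersecting with $\Mm_\cl$ using Theorem~\ref{th:knb}(d) for the nontrivial inclusion, and then passes to dual cones. So the duality route you relegate to a fallback is correct and is the intended argument; you should promote it to the main proof.

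The generator-by-generator argument you present as your primary route has more gaps than the single one you acknowledge for non-type-$a$ colors. First, Proposition~\ref{prop:v} gives only the equality of cones $\pi(\Vm)=\Vm_\cl$; it does not tell you that for a $G$-invariant prime divisor $Y\subseteq X$ containing $X_1$ the element $\pi(\nu_Y)$ is a nonnegative combination of the actual generators of $\Cm'_\cl$. For that you would have to relate $Y\cap\overline{X_\cl}$ to the $G$-invariant divisors of $\overline{X_\cl}$ and compare valuations, which is essentially as hard as the statement being proved; the phrase ``a parallel geometric analysis'' hides the entire difficulty. Second, in the reverse inclusion a $G$-invariant divisor $Y_\cl$ of $\overline{X_\cl}$ through $X_1$ has $\nu_{Y_\cl}\in\Vm_\cl=\pi(\Vm)$, but an arbitrary preimage in $\Vm$ need not lie in $\Cm'$ (the fiber of $\pi$ is a coset of $\lspan_\Q\Cm$, not of $\Cm$), and nothing in your sketch produces a preimage inside $\Cm'$. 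The type-$a$ color bookkeeping via $\psi$ and Proposition~\ref{prop:apv} is fine as far as it goes, but the remaining cases make this route considerably longer than the duality argument, which handles all generators uniformly.
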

\begin{proof}
  For $\chi \in \Mm$ we denote by $f_\chi \in \C(G/H)$ a
  $B$-semi-invariant rational function of weight $\chi$.  The rational
  function $f_\chi$ is defined on an open subset of $X_1$ if and only
  if $X_1$ is not contained in the pole set of $f_\chi$,
  which is equivalent to $\chi \in (\Cm')^\vee$ (see, for instance,
  \cite[Theorem~2.5, a)]{knopsph}).
  We obtain
  $$
    A' \coloneqq \{ \chi \in \Mm : \text{$f_\chi$ is defined on an
      open subset of $X_1$} \} = ( \Cm' )^\vee \cap \Mm \text{.}
  $$
  Similarly, we denote by $f_{0,\chi} \in \C(X_0)$ a
  $B$-semi-invariant rational function of weight $\chi$ for $\chi \in
  \Mm_\cl$ and obtain
  $$
    A'_0 \coloneqq \{ \chi \in \Mm_0 : \text{$f_{0,\chi}$ is defined
      on an open subset of $X_1$}\} = ( \Cm'_0 )^\vee \cap \Mm_\cl
    \text{.}
  $$
  As $A'_0 = A' \cap \Mm_0$ (where the inclusion \enquote{$\subseteq$} follows
  from Theorem~\ref{th:knb}(d)), we have $(\Cm'_0)^\vee = (\Cm')^\vee \cap \Mm_{0,\Q}
  = \pi(\Cm')^\vee$.  Passing to the dual cones, we obtain $\Cm'_0 =
  \pi(\Cm')$.
\end{proof}

\begin{prop}
  We have $\Fm'_0 = \Phi( \Fm' )$.
\end{prop}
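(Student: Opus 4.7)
The plan is to prove $\Fm'_0 = \Phi(\Fm')$ by double inclusion, leveraging throughout the following key observation: applying Proposition~\ref{prop:sp} to the simple embeddings $G/H \hookrightarrow X_{X_1}$ and $X_0 \hookrightarrow (\overline{X_0})_{X_1}$ (both having $X_1$ as their closed orbit, with colored cones $(\Cm', \Fm')$ and $(\Cm'_0, \Fm'_0)$ respectively) would yield, for every $\alpha \in S$,
\[
\Dm(\alpha) \subseteq \Fm' \iff \Dm_0(\alpha) \subseteq \Fm'_0,
\]
since both conditions characterize $\alpha$ being in the $S^p$-invariant of $X_1$ as a spherical homogeneous space.

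First I would prove $\Fm'_0 \subseteq \Phi(\Fm')$ by case analysis on $D_0 \in \Fm'_0$, using that $X_1 \subseteq \overline{D_0}^X$ (closure in $\overline{X_0}$ equals closure in $X$). For $D_0 \in \Dm_0^a$, Proposition~\ref{prop:aonecl} gives $\overline{D_0}^X \subseteq \overline{\psi(D_0)}^X$, which forces $\psi(D_0) \in \Fm'$ and hence $D_0 \in \psi^{-1}(\Fm')$. For $D_0$ of type $2a$, $b_1$, or $b_2$, one has $\Dm_0(\alpha) = \{D_0\}$ for each $\alpha \in \varsigma_0(D_0)$, so $\Dm_0(\alpha) \subseteq \Fm'_0$, and the key observation transports this to $\Dm(\alpha) \subseteq \Fm'$, placing $D_0$ in the union term of $\Phi(\Fm')$. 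The reverse inclusion $\Phi(\Fm') \subseteq \Fm'_0$ decomposes similarly: the part $\bigcup_{\alpha : \Dm(\alpha) \subseteq \Fm'} \Dm_0(\alpha) \subseteq \Fm'_0$ is immediate from the key observation.

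The main technical step, $\psi^{-1}(\Fm') \subseteq \Fm'_0$, requires showing, for $D \in \Fm'$ with $D_0 := \psi^{-1}(D) \in \Dm_0^a$, that $X_1 \subseteq \overline{D_0}^X$. My plan is to work in the $B$-stable affine open $X' := X_{X_1} \setminus \bigcup_{E \in \Dm \setminus \Fm'} \overline{E}^X$ from Theorem~\ref{th:knb}, in which $X' \cap X_1$ is the open $B$-orbit of $X_1$. The uniqueness part of Proposition~\ref{prop:aonecl} ensures that no color $E \in \Dm \setminus \Fm'$ other than $\psi(D_0) = D$ (which lies in $\Fm'$) contains $D_0$ in its closure, so $D_0 \cap X'$ is a nonempty open subset of $D_0$. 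A $G$-saturation and closure argument inside the simple embedding $X_{X_1}$ (using that any nonempty closed $G$-stable subset of $X_{X_1}$ must contain the unique closed orbit $X_1$) then yields $X_1 \cap \overline{D_0}^X \ne \emptyset$. The hard part will be upgrading this nonempty intersection to the full containment $X_1 \subseteq \overline{D_0}^X$, equivalently showing that the $B$-stable set $\overline{D_0}^X \cap X_1$ meets the open $B$-orbit $X' \cap X_1$ of $X_1$. My approach is to combine Theorem~\ref{th:knb}(d) (extension of $B$-semi-invariants from $X' \cap X_1$ to $X'$) with the compatibility $\rho_0(D_0) = \pi(\rho(D)) \in \Cm'_0$ from Propositions~\ref{prop:apv} and \ref{prop:cone-of-orbit}, identifying $\overline{D_0}^X$ with a divisor contributing to the colored cone $(\Cm'_0, \Fm'_0)$ of $X_1$ in $X_0 \hookrightarrow \overline{X_0}$, which forces $D_0 \in \Fm'_0$ via the orbit-cone correspondence.
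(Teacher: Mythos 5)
Your overall architecture coincides with the paper's: the double inclusion, the identity $\{\alpha \in S: \Dm(\alpha)\subseteq\Fm'\}=S^p_1=\{\alpha \in S:\Dm_0(\alpha)\subseteq\Fm'_0\}$ obtained by computing $S^p$ of $X_1$ from both embeddings, and your treatment of $\Fm'_0\subseteq\Phi(\Fm')$ and of the union term are all essentially the paper's argument and are sound. The gap is precisely in the step you flag as hard, $\psi^{-1}(\Fm')\subseteq\Fm'_0$, and neither mechanism you propose for it works. First, the \enquote{$G$-saturation and closure argument}: $\overline{D_0}$ is only $B$-stable, so its $G$-saturation is all of $\overline{X_0}$, which contains $X_1$ for trivial reasons and yields no information about $\overline{D_0}\cap X_1$; the principle that nonempty closed $G$-stable subsets contain the closed orbit simply does not apply to a color closure. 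Second, and more seriously, your concluding step rests on the principle that $\rho_0(D_0)=\pi(\rho(D))\in\Cm'_0$ forces $D_0\in\Fm'_0$. This is false in general: the color set of a colored cone is independent data, not recoverable from the cone, and a color can map into $\Cm'_0$ without its closure containing $X_1$.

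What actually closes the argument in the paper is a pairing computation against a spherical root, exploiting that $D_0$ has type $a$. One may assume $D_0\notin\bigcup_{\alpha\in S_{\Fm'}}\Dm_0(\alpha)$ (otherwise $D_0\in\Fm'_0$ by the key observation). Pick $\alpha'\in\varsigma_0(D_0)\subseteq\Sigma_0$ and let $D'_0$ be the other element of $\Dm_0(\alpha')$. Then $\alpha'\notin S_{\Fm'}$, so $\Dm(\alpha')\not\subseteq\Fm'$; since $\psi|_{\Dm_0(\alpha')}\colon\Dm_0(\alpha')\to\Dm(\alpha')$ is a bijection (Proposition~\ref{prop:aonecl}) and $\psi(D_0)=D\in\Fm'$, it follows that $\psi(D'_0)\notin\Fm'$, hence $D'_0\notin\Fm'_0$ by the already-established inclusion $\Fm'_0\subseteq\Phi(\Fm')$. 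Now $D_0$ and $D'_0$ are the only colors of $X_0$ pairing positively with $\alpha'$, and every element of $\Vm_0$ pairs nonpositively with $\alpha'\in\Sigma_0$; so if $D_0\notin\Fm'_0$ as well, every generator of $\Cm'_0$ pairs nonpositively with $\alpha'$, i.e.\ $\langle\Cm'_0,\alpha'\rangle\subseteq\Q_{\le0}$. This contradicts $\pi(\rho(D))\in\pi(\Cm')=\Cm'_0$ together with $\langle\pi(\rho(D)),\alpha'\rangle=\langle\rho_0(D_0),\alpha'\rangle=1$ (Propositions~\ref{prop:apv} and~\ref{prop:cone-of-orbit}). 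So the cone membership you invoke is indeed the right ingredient, but it only bites in combination with the fact that \emph{all other} generators of $\Cm'_0$ pair nonpositively with $\alpha'$; that is the idea missing from your sketch.
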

\begin{proof}
  Let $(\Mm_1, \Sigma_1, S^p_1, \Dm^a_1)$ be the homogeneous spherical
  datum of $X_1$. By Theorem~\ref{theorem:1}, we have 
  $$
    S_{\Fm'} \coloneqq \{\alpha \in S : \Dm(\alpha) \subseteq \Fm'\} =
    S_1^p = \{\alpha \in S : \Dm_\cl(\alpha) \subseteq \Fm'_\cl\}
    \eqqcolon S_{\Fm'_\cl} \text{.}
  $$

  \enquote{$\subseteq$}: Let $D_\cl \in \Fm_\cl'$. Since
  $\bigcup_{\alpha \in S_{\Fm'}} \Dm_\cl( \alpha ) \subseteq
  \Phi(\Fm')$, it remains to verify the case $D_\cl \notin
  \bigcup_{\alpha \in S_{\Fm'}} \Dm_\cl( \alpha )$. As $S_{\Fm'} =
  S_{\Fm'_\cl}$, for every $\alpha \in \varsigma_\cl( D_\cl )$ the set
  $\Dm_\cl( \alpha )$ is not contained in $\Fm_\cl'$, which means that
  it contains $D_\cl$ and a color in $\Dm_\cl$ not contained in
  $\Fm'_\cl$. This implies $| \Dm_\cl( \alpha ) | = 2$ and hence
  $D_\cl \in \Dm_\cl^a$. As $X_1$ is contained in the closure of
  $D_\cl$, which is contained in the closure of
  $\psi(D_\cl)$, we obtain $\psi(D_\cl) \in \Fm'$.

  \enquote{$\supseteq$}: Let $D_\cl \in \Phi(\Fm')$. If $D_\cl \in
  \bigcup_{\alpha \in S_{\Fm'}} \Dm_\cl(\alpha)$, then, as $S_{\Fm'} =
  S_{\Fm'_\cl}$, we have $D_\cl \in \Fm'_\cl$. It remains to verify the
  case $D_\cl \in \psi^{-1}(\Fm') \setminus \bigcup_{\alpha \in
    S_{\Fm'}}\Dm_\cl(\alpha)$, in particular $D_\cl \in \Dm_\cl^a$ and
  $\psi( D_\cl ) \in \Fm'$.  Let $\alpha' \in \varsigma_\cl( D_\cl
  )$. As the restriction $\psi|_{\Dm_\cl( \alpha' )} \colon \Dm_\cl(
  \alpha' ) \to \Dm( \alpha' )$ is bijective and $\psi(D_\cl) \notin
  \bigcup_{\alpha \in S_{\Fm'}} \Dm(\alpha)$, the unique color $D_\cl'
  \in \Dm_\cl(\alpha') \setminus \{D_0\}$ satisfies $\psi(D_\cl')
  \notin \Fm'$, hence $D'_\cl \notin \Fm'_0$. Moreover, observe that
  $$
    \{ D''_\cl \in \Dm_\cl : \langle \rho_\cl( D''_\cl ), \alpha'
    \rangle > 0 \} = \Dm_\cl( \alpha' ) = \{D_\cl, D'_\cl \}\text{.}
  $$
  Now assume $D_\cl \not \in \Fm_\cl'$.  Then the cone $\Cm'_\cl$ is
  generated by elements of $\Vm_\cl$ and $\rho_0(D''_0)$ for some
  colors $D''_\cl$ in $\Dm_\cl$ not contained in $\Dm_\cl(\alpha')$,
  which implies $\langle \Cm'_\cl, \alpha'\rangle \subseteq \Q_{\le
    0}$.  According to Proposition~\ref{prop:cone-of-orbit}, we have
  $\pi(\rho(\psi(D_\cl))) \in \Cm'_\cl$.  Using Proposition~\ref{prop:apv},
  we also obtain $\langle
  \pi(\rho(\psi(D_\cl))), \alpha' \rangle = \langle \rho_\cl(D_\cl),
  \alpha' \rangle = 1 > 0$, a contradiction.
\end{proof}

\section{Intersecting colors with orbits}
\label{section:ico}

In this section, we assume $G/H \hookrightarrow X$ to be a simple
spherical embedding with closed orbit $X_\cl$. Recall that the prime
divisor $\overline{D}$ in $X$ is a Cartier divisor for every $D \in
\Dm \setminus \Fm$ in this case (see~\cite[2.2,
Proposition]{l-brion-pic}).  We are going to determine the
scheme-theoretic intersection $\overline{D} \cap X_\cl$.

This is done similarly to the proof of
Proposition~\ref{prop:at-most-one}.  After replacing $G$ with $G^{ss}
\times C$, the $B$-invariant effective Cartier divisor $\overline{D}$
defines a $G$-linearized line bundle $L_D$ together with a
$B$-semi-invariant and $C$-invariant global section $s_D$, whose
$B$-weight we denote by $\lambda_D$.  Then $\Div s|_{X_0}$ coincides
with the scheme-theoretic intersection $\overline{D} \cap X_\cl$.

The weight $\lambda_D$ can be determined explicitly by applying
Proposition~\ref{prop:domcoeff} to the restriction $s|_{G/H}$.
On the other hand, we may use
the method of the proof of Proposition~\ref{prop:at-most-one} to
determine the colors appearing in $\Div s|_{X_\cl}$ with their
multiplicities: In the notation of the proof of
Proposition~\ref{prop:at-most-one}, we have $\lambda_D = \sum_{D_\cl
  \in \Dm_\cl} \mu_{D_\cl}\lambda_{D_\cl}$, where $\mu_{D_\cl}$ is the
multiplicity of the color $D_\cl$ in $\Div s|_{X_\cl}$.
The multiplicities $\mu_{D_\cl}$ can now be obtained by inspecting
the equalities $\langle \alpha^\vee, \lambda_D \rangle = \sum_{D_\cl \in \Dm_\cl}
\mu_{D_\cl} \langle\alpha^\vee, \lambda_{D_\cl} \rangle$ for $\alpha \in S$.
Indeed, for $D_\cl \in \Dm_\cl^{2a} \cup \Dm_\cl^{b1} \cup \Dm_\cl^{b2}$
and $\alpha \in \varsigma_\cl(D_\cl)$ we obtain
$\langle \alpha^\vee, \lambda_D \rangle = \mu_{D_\cl} \langle \alpha^\vee, \lambda_{D_\cl}\rangle$.
For $D_\cl \in \Dm_\cl^{a}$ and $\alpha \in \varsigma_\cl(D_\cl)$
we write $\Dm_\cl(\alpha) = \{D_\cl, D'_\cl\}$ with $D_\cl \ne D'_\cl$.
Then we have $\alpha \in \Sigma$ by Corollary~\ref{cor:type-a}, and
the ambiguity in the expression 
$1 \ge \langle \alpha^\vee, \lambda_D \rangle = \mu_{D_\cl} \langle \alpha^\vee, \lambda_{D_\cl}\rangle
+ \mu_{D'_\cl} \langle \alpha^\vee, \lambda_{D'_\cl}\rangle = \mu_{D_\cl} + \mu_{D'_\cl}$
is resolved by the observation
that, if $D \in \psi(\Dm_\cl^a)$, then the color $\psi^{-1}(D)$ must
occur in $\Div s|_{X_\cl}$.

An explicit description of $\overline{D} \cap X_\cl$ is given in Table
\ref{tab:int}.

\begin{table}[ht!]
\centering
  \begin{tabular}{lll}
    \toprule
    $D$ & \smash{$\overline{D} \cap X_\cl$} & condition \\
    \midrule[ \heavyrulewidth  ]
    \multirow{2}{1.75cm}{$D \in \Dm^a$} & $\psi^{-1}(D) + \sum_{\alpha \in \varsigma( D ) \setminus \Sigma_\cl} D_{\cl,\alpha}$ &
    $\varsigma(D) \cap  \Sigma_\cl \ne \emptyset$   \\
    & $\sum_{\alpha \in \varsigma( D )} D_{\cl,\alpha}$ & $\varsigma(D) \cap  \Sigma_\cl = \emptyset$ \\ 
    \midrule
    \multirow{2}{1.75cm}{$D_{2\alpha} \in \Dm^{2a}$} & $D_{\cl,2\alpha}$ & $2\alpha \in \Sigma_\cl$ \\
    & $2D_{\cl,\alpha}$ &  $2\alpha \notin \Sigma_\cl$ \\
    \midrule
    \multirow{2}{1.75cm}{$D_{\alpha,\beta} \in \Dm^{b2}$}& $D_{\cl,\alpha,\beta}$ & $\alpha+\beta \in \Sigma_\cl \cup 2\Sigma_\cl$\\
    & $D_{\cl,\alpha}+D_{\cl,\beta}$ & $\alpha+\beta \notin \Sigma_\cl \cup 2\Sigma_\cl$ \\
    \midrule
    $D_\alpha \in \Dm^{b1}$& $D_{\cl,\alpha}$ & \\ 
    \bottomrule
  \end{tabular}
  \caption{}
  \label{tab:int}
\end{table}

\section{Examples}
\label{section:ex}

In this section, we provide examples illustrating several
phenomena. Example~\ref{ex:type_a_to_b},
Example~\ref{ex:type_2a_to_b}, and Example~\ref{ex:doubling_colors}
show how colors in $\Dm^{a}$, $\Dm^{2a}$, and $\Dm^{b2}$ respectively
can lead to colors in $\Dm^{b1}_\cl$ (colors in $\Dm^a_\cl$,
$\Dm^{2a}_\cl$, and $\Dm^{b2}_\cl$ always come from corresponding
colors in $\Dm^a$, $\Dm^{2a}$, and $\Dm^{b2}$
respectively). Example~\ref{ex:l} and Example~\ref{ex:new_sph_root_2}
show that $\Sigma_\cl=\Sigma\cap\Mm_\cl$ is not true in general and
illustrate the appearance of new spherical roots depending on the
colored cone chosen. Moreover, Example~\ref{ex:new_sph_root_2}
illustrates the possibility that the scheme-theoretic intersection of
the closure of a color in $\Dm^a$ with $X_0$ can be the sum of a color
in $\Dm_\cl^a$ and a color in $\Dm_\cl^{b1}$. Finally,
Example~\ref{ex:clfan} illustrates Theorem~\ref{theorem:2}.

\begin{example}
  \label{ex:type_a_to_b}
  Let $G \coloneqq \SL_2$ with set of simple roots $S = \{\alpha\}$
  and consider the homogeneous spherical datum $(\Mm, \Sigma, S^p,
  \Dm^a)$ given by
  \begin{align*}
    \Mm &\coloneqq \lspan_\Z \Sigma\text{,} & \Sigma &\coloneqq
    \{\alpha\}\text{,} \\
    S^p &\coloneqq \emptyset\text{,} & \Dm^a &\coloneqq \{D', D''\}
  \end{align*}
  where $\rho(D') = \rho(D'') = \tfrac{1}{2}
  \alpha^\vee|_{\Mm}$. This corresponds to the spherical homogeneous
  space $\SL_2/T$ where $T \subseteq \SL_2$ is a maximal
  torus. Consider the colored cone $(\Vm, \emptyset)$ associated to
  the spherical embedding $\SL_2 / T \hookrightarrow \Pb^1 \times
  \Pb^1$ with closed orbit $\diag(\Pb^1) \cong \SL_2 / B$. Then we
  have $\Mm_\cl = \Cm^\perp \cap \Mm = \{0\}$, $\Sigma_\cl =
  \emptyset$, $S^p_\cl = \emptyset$, and $\Dm^a_\cl = \emptyset$. The
  full set of colors is $\Dm_\cl = \{D_{0,\alpha}\}$.
\end{example}

\begin{example}
  \label{ex:type_2a_to_b}
  Let $G \coloneqq \SL_2$ with set of simple roots $S = \{\alpha\}$
  and consider the homogeneous spherical datum $(\Mm, \Sigma, S^p,
  \Dm^a)$ given by
  \begin{align*}
    \Mm &\coloneqq \lspan_\Z \Sigma\text{,} & \Sigma &\coloneqq
    \{2\alpha\}\text{,}\\
    S^p &\coloneqq \emptyset\text{,} & \Dm^a &\coloneqq
    \emptyset\text{.}
  \end{align*}
  This corresponds to the spherical homogeneous space $\SL_2/N$ where
  $N$ is the normalizer of a maximal torus in $\SL_2$.  The full set
  of colors is $\Dm = \{D_{2\alpha}\}$ with $\rho(D_{2\alpha}) =
  \tfrac{1}{2}\alpha^\vee|_{\Mm}$.  Consider the colored cone $(\Vm,
  \emptyset)$ associated to the spherical embedding $\SL_2 / N
  \hookrightarrow \Pb^2$ with closed orbit a quadric isomorphic to
  $\SL_2 / B$. Then we have $\Mm_\cl = \Cm^\perp \cap \Mm = \{0\}$,
  $\Sigma_\cl = \emptyset$, $S^p_\cl = \emptyset$, and $\Dm^a_\cl =
  \emptyset$. The full set of colors is $\Dm_\cl = \{D_{0,\alpha}\}$.
\end{example}

\begin{example}
  \label{ex:doubling_colors}
  Let $G \coloneqq \SL_2 \times \SL_2$ with set of simple roots $S =
  \{\alpha, \beta\}$ and consider the homogeneous spherical datum
  $(\Mm, \Sigma, S^p, \Dm^a)$ given by
  \begin{align*}
    \Mm &\coloneqq \lspan_\Z \Sigma\text{,} & \Sigma &\coloneqq
    \{\alpha + \beta\}\text{,} \\
    S^p &\coloneqq \emptyset\text{,} & \Dm^a &\coloneqq
    \emptyset\text{.}
  \end{align*}
  This corresponds to the spherical homogeneous space $(\SL_2 \times
  \SL_2)/N$ where $N$ is the normalizer of $\diag(SL_2)$ in $SL_2
  \times SL_2$. The full set of colors is $\Dm =
  \{D_{\alpha,\beta}\}$ with $\rho(D_{\alpha,\beta}) =
  \alpha^\vee|_{\Mm} = \beta^\vee|_{\Mm}$. Consider the colored cone
  $(\Vm, \emptyset)$. Then we have $\Mm_\cl = \Cm^\perp \cap \Mm =
  \{0\}$, $\Sigma_\cl = \emptyset$, $S^p_\cl = \emptyset$, and
  $\Dm^a_\cl = \emptyset$. The full set of colors is $\Dm_\cl =
  \{D_{0,\alpha}, D_{0,\beta}\}$.
\end{example}

\begin{example}
  \label{ex:l}
  This example is based on \cite[1.2.4, Example~4]{f4}.  Let $G
  \coloneqq \SL_2 \times F_4$ with set of simple roots $S =
  \{\alpha_1, \beta_1, \beta_2, \beta_3, \beta_4\}$ and consider the
  homogeneous spherical datum $(\Mm, \Sigma, S^p, \Dm^a)$ given by
  \begin{align*}
    \Mm &\coloneqq \lspan_\Z \Sigma\text{,} & \Sigma &\coloneqq
    \{\alpha_1, \beta_1, \beta_2+\beta_3,
    \beta_3+\beta_4\}\text{,}  \\
    S^p &\coloneqq \emptyset\text{,} & \Dm^a &\coloneqq \{D', D'',
    D'''\}
  \end{align*}
  where $\rho^a \colon \Dm^a \to \Nm$ has yet to be specified.  We may
  regard $\Sigma$ as standard basis of $\Mm$, which yields $\Mm \cong
  \Z^4$ and a corresponding dual isomorphism $\Nm \cong \Z^4$. We give
  the full map $\rho \colon \Dm \to \Nm$. We have $\Dm = \{D', D'',
  D''', D_{\beta_2}, D_{\beta_3}, D_{\beta_4}\}$ with
  \begin{align*}
    \rho(D') &= (1,-1,0,0)\text{,} &\rho(D'') &= (1,1,0,0)\text{,} &
    \rho(D''') &= (-1, 1,-1,0)\text{,} \\
    \rho(D_{\beta_2}) &= (0,-1,1,-1)\text{,} & \rho(D_{\beta_3}) &=
    (0,0,0,1) \text{,} & \rho(D_{\beta_4}) &= (0,0,-1,1)\text{.}
  \end{align*}
  Now consider $\Fm \coloneqq \{D''', D_{\beta_2}\}$, $\Cm \coloneqq
  \cone(\rho(\Fm))$, and the colored cone $(\Cm, \Fm)$.  We obtain
  $\Mm_\cl = \rho(\Fm)^\perp \cap \Mm = \lspan_\Z((0,1,1,0),
  (1,1,0,-1))$ and
  $$
    \cone(\Sigma_\cl) = \cone(\Sigma) \cap \Mm_\cl =
    \cone((0,1,1,0))\text{,}
  $$
  \ie $\Sigma_\cl = \{\beta_1 + \beta_2 + \beta_3\}$. Moreover, we
  have $S^p_\cl = \{\beta_2\}$ and $\Dm^a_\cl = \emptyset$.  The full
  set of colors is $\Dm_\cl = \{D_{0,\alpha_1}, D_{0,\beta_1},
  D_{0,\beta_3}, D_{0,\beta4}\}$.  Note that $\dim (\Cm \cap \Vm) <
  \dim \Cm$ and $\Sigma_\cl \nsubseteq \Sigma$.  Moreover, we have
  $\beta_1 \notin \Sigma_\cl$ and $\cone(\Sigma_\cl) \nsubseteq
  \cone(\Sigma \setminus \{\beta_1\})$, which, as required by
  Proposition~\ref{rem:a-ok}, forces $\varsigma(D'') \cap \Sigma_\cl =
  \emptyset$ and hence $\alpha_1 \notin \Sigma_\cl$.
\end{example}

\begin{example}
  \label{ex:new_sph_root_2}
  We consider the same homogeneous spherical datum as in
  Example~\ref{ex:l}, but the colored cone $(\Cm, \Fm)$ with $\Fm
  \coloneqq \{D_{\beta_2}, D_{\beta_4}\}$ and $\Cm \coloneqq
  \cone(\rho(\Fm))$.  We obtain $\Mm_\cl = \Cm^\perp \cap \Mm =
  \lspan_\Z((1,0,0,0), (0,0,1,1))$ and
  $$
    \cone(\Sigma_\cl) = \cone(\Sigma) \cap \Mm_\cl = \cone((1,0,0,0),
    (0,0,1,1))\text{,}
  $$
  \ie $\Sigma_\cl = \{\alpha_1, \beta_2 + 2\beta_3 +
  \beta_4\}$. Moreover, we have $S^p_\cl = \{\beta_2, \beta_4\}$ and
  $\Dm^a_\cl = \{D'_0, D''_0\}$ with $D' = \psi(D'_0)$ and
  $D'' = \psi(D''_0)$.  The full set of colors is $\Dm_\cl =
  \{D'_0, D''_0, D_{0,\beta_1}, D_{0,\beta_3}\}$.  Again, we have
  $\dim (\Cm \cap \Vm) < \dim \Cm$ and $\Sigma_\cl \nsubseteq
  \Sigma$. In this case, we have $\cone(\Sigma_\cl) \subseteq
  \cone(\Sigma \setminus \{\beta_1\})$, which means that
  Proposition~\ref{rem:a-ok} still allows $\alpha_1 \in
  \Sigma_\cl$. Moreover, as $D'' \in \Dm^a$ and $\varsigma( D'' ) = \{ \alpha_1, \beta_1
  \}$, we have the scheme-theoretic intersection
  $\overline{D''} \cap X_\cl = D_\cl'' + D_{\cl,\beta_1} $.
\end{example}

\begin{example}
  \label{ex:clfan}
  Let $G \coloneqq \SL_2 \times \SL_2 \times \SL_2 \times \SL_2$ with
  set of simple roots $S = \{\alpha, \beta, \gamma, \delta\}$ and
  consider the homogeneous spherical datum $(\Mm, \Sigma, S^p, \Dm^a)$
  given by
  \begin{align*}
    \Mm &\coloneqq \lspan_\Z (\Sigma \cup
    \{\tfrac{1}{2}\delta\})\text{,} & \Sigma &\coloneqq
    \{\alpha, \tfrac{\beta+\gamma}{2}\}\text{,}  \\
    S^p &\coloneqq \emptyset\text{,} & \Dm^a &\coloneqq \{D', D''\}
  \end{align*}
  where $\rho^a \colon \Dm^a \to \Nm$ has yet to be specified.  We may
  regard $\Sigma \cup \{\tfrac{1}{2}\delta\}$ as standard basis of
  $\Mm$, which yields $\Mm \cong \Z^3$ and a corresponding dual
  isomorphism $\Nm \cong \Z^3$. We give the full map $\rho \colon \Dm
  \to \Nm$. We have $\Dm = \{D', D'', D_{\beta,\gamma}, D_{\delta}\}$
  with
  \begin{align*}
 \rho(D') = \rho(D'') &= e_1 \coloneqq (1,0,0)\text{,} \\
  \rho(D_{\beta,\gamma}) &= e_2 \coloneqq (0,1,0)\text{,} \\
 \rho(D_{\delta}) &= e_3 \coloneqq (0,0,1)\text{.}
  \end{align*}
  The corresponding spherical subgroup is $H \coloneqq T \times
  \diag(\SL_2) \times U$ where $T \subseteq \SL_2$ is a maximal torus
  and $U \subseteq \SL_2$ is a maximal unipotent subgroup.  Now
  consider the colored fan
  \begin{align*}
    \Ff \coloneqq \big\{&(0, \emptyset), (\cone(-e_1-e_2), \emptyset),
    (\cone(e_3),\{D_\delta\}), (\cone(-e_3),\emptyset), \\
    &(\cone(-e_1-e_2, e_1), \{D'\}), (\cone(-e_1-e_2, e_2), \{D_{\beta,\gamma}\}), \\
    &(\cone(-e_1-e_2, e_3) ,\{D_\delta\}), (\cone(-e_1-e_2, -e_3), \emptyset),\\
    &(\cone(-e_1-e_2, e_1, e_3),\{D', D_\delta\} ), (\cone(-e_1-e_2, e_1, -e_3), \{D'\}),\\
    &(\cone(-e_1-e_2, e_2, e_3), \{D_{\beta,\gamma}, D_\delta\}),
    (\cone(-e_1-e_2, e_2, -e_3), \{D_{\beta,\gamma}\}) \big\}\text{,}
  \end{align*}
  which corresponds to a complete spherical embedding of $G/H$.

  We are going to apply Theorem~\ref{theorem:2} to several colored
  cones $(\Cm, \Fm) \in \Ff$. We denote by $\Ff_0$ the colored fan of
  the spherical embedding $X_0 \hookrightarrow \overline{X_0}$
  where $X_0$ is the $G$-orbit corresponding to the colored cone
  $(\Cm, \Fm)$ and define $\overline{e_i} \coloneqq \pi(e_i)$ for
  $i=1,2,3$.

  First, consider the colored cone $(\Cm, \Fm) \coloneqq (\cone(e_3),
  \{D_\delta\})$. According to Theorem~\ref{theorem:1}, we have $\Mm_0
  = \lspan_\Z(\Sigma)$, $\Sigma_0 = \Sigma$, $S^p_0 = \{\delta\}$, and
  $\Dm_0^a = \{D'_0, D''_0\}$ with $D' = \psi(D'_0)$ and $D'' =
  \psi(D''_0)$.  The full set of colors is given by $\Dm_\cl = \{D'_0,
  D''_0, D_{0,\beta,\gamma}\}$.  Theorem~\ref{theorem:2} yields
  \begin{align*}
    \Ff_0 = \big\{&(0, \emptyset),
    (\cone(-\overline{e_1}-\overline{e_2}),\emptyset),
    (\cone(-\overline{e_1}-\overline{e_2},\overline{e_1}), \{D'_0\}),\\
    &(\cone(-\overline{e_1}-\overline{e_2},\overline{e_2}),
    \{D_{0,\beta,\gamma}\})\big\}\text{.}
  \end{align*}

  Next, consider the colored cone $(\Cm, \Fm) \coloneqq (\cone(-e_3),
  \emptyset)$. According to Theorem~\ref{theorem:1}, we have $\Mm_0 =
  \lspan_\Z(\Sigma)$, $\Sigma_0 = \Sigma$, $S^p_0 = \emptyset$, and
  $\Dm_0^a = \{D'_0, D''_0\}$ with $D' = \psi(D'_0)$ and $D'' =
  \psi(D''_0)$.  The full set of colors is given by $\Dm_\cl = \{D'_0,
  D''_0, D_{0,\beta,\gamma}, D_{0,\delta}\}$.  Theorem~\ref{theorem:2}
  yields
  \begin{align*}
    \Ff_0 = \big\{&(0, \emptyset),
    (\cone(-\overline{e_1}-\overline{e_2}),\emptyset),
    (\cone(-\overline{e_1}-\overline{e_2},\overline{e_1}), \{D'_0\}),\\
    &(\cone(-\overline{e_1}-\overline{e_2},\overline{e_2}),
    \{D_{0,\beta,\gamma}\})\big\}\text{.}
  \end{align*}

  Finally, consider the colored cone $(\Cm, \Fm) \coloneqq
  (\cone(-e_1-e_2), \emptyset)$. According to Theorem~\ref{theorem:1},
  we have $\Mm_0 = \lspan_\Z(\{\alpha - \tfrac{\beta+\gamma}{2},
  \tfrac{1}{2}\delta\})$, $\Sigma_0 = \emptyset$, $S^p_0 = \emptyset$,
  and $\Dm_0^a = \emptyset$.  The full set of colors is given by
  $\Dm_\cl = \{D_{0,\alpha}, D_{0,\beta}, D_{0,\gamma},
  D_{0,\delta}\}$.  Theorem~\ref{theorem:2} yields
  \begin{align*}
    \Ff_0 = \big\{&(0, \emptyset),
    (\cone(\overline{e_1}), \emptyset), (\cone(\overline{e_2}), \{D_{0,\beta}, D_{0,\gamma}\}), \\
    &(\cone(\overline{e_3}) ,\{D_{0,\delta}\}), (\cone(-\overline{e_3}), \emptyset),\\
    &(\cone(\overline{e_1}, \overline{e_3}),\{D_{0,\delta}\} ),
    (\cone(\overline{e_1}, -\overline{e_3}), \emptyset),\\
    &(\cone(\overline{e_2}, \overline{e_3}),
    \{D_{0,\beta},D_{0,\gamma}, D_{0,\delta}\}),
    (\cone(\overline{e_2}, -\overline{e_3}), \{D_{0,\beta},
    D_{0,\gamma}\}) \big\}\text{.}
  \end{align*}
\end{example}

\section*{Acknowledgments}
We would like to thank our teacher Victor Batyrev for posing the
problem, and for encouragement and advice, as well as J\"{u}rgen Hausen for
several useful discussions.  We are very grateful for the suggestions
of the referee, which have, in particular,
resulted in a considerable simplification
of the proof of Theorem~\ref{theorem:1}.

\bibliographystyle{amsalpha}
\bibliography{orbits}

\end{document}